\documentclass[11pt]{amsart}

\usepackage{graphicx}
\usepackage{amssymb}
\usepackage{latexsym}
\usepackage{epstopdf}
\DeclareGraphicsRule{.tif}{png}{.png}{`convert #1 `dirname #1`/`basename #1 .tif`.png}

\allowdisplaybreaks[1]  % This makes the conversion from latex-article to amsart easier.

\textwidth = 6.6 in
\textheight = 8.6 in
\oddsidemargin = 0.0 in
\evensidemargin = 0.0 in
\topmargin = 0.0 in
\headheight = 0.08 in
\headsep = 0.2 in
\parskip = 0.15in
\parindent = 0.0in

\newtheorem{thm}{Theorem}[section]

\newtheorem{prop}[thm]{Proposition}
\newtheorem{defn}{Definition}

\renewcommand{\t}{\tau}
\newcommand{\s}{\sigma}
\newcommand{\me}{\widetilde{\mu_\e}}
\newcommand{\mei}{\widetilde{\mu_{\e_j}}}
\newcommand{\lei}{\mathcal{L}_{\e_j}}
\newcommand{\tr}{\mathrm{Tr}}
\newcommand{\dvol}{\,d\mathrm{vol}}

\renewcommand{\div}{\mathrm{div}}
\newcommand{\grad}{\mathrm{grad}}
\newcommand{\D}{\mathcal{D}}
\newcommand{\Q}{\mathcal{Q}}
\newcommand{\Ne}{\mathcal{N}_\varepsilon}
\renewcommand{\L}{\Lambda}
\newcommand{\e}{\varepsilon}
\newcommand{\R}{\mathbb{R}}
\newcommand{\se}{{^{\e\hspace{-.03in}}\sigma}}
\newcommand{\so}{{^{_{0}\hspace{-.025in}}\sigma}}
\newcommand{\Oo}{{^{0\hspace{-.01in}}\Omega}}
\newcommand{\Oe}{{^{\e\hspace{-.01in}}\Omega}}
\newcommand{\cle}{\lesssim}
\newcommand{\A}{\mathcal{A}}
\newcommand{\G}{\mathcal{G}}
\newcommand{\C}{\mathcal{C}}
\renewcommand{\L}{\mathcal{L}}
\renewcommand{\H}{\mathcal{H}}
\newcommand{\N}{\mathbb{N}}
\newcommand{\sM}{{^{\ _{\hspace{-.04in}M}\hspace{-.03in}}\sigma}}
\newcommand{\pM}{i_\e}
\newcommand{\pMi}{i_{\e_j}}
\newcommand{\gM}{{^{\ _{\hspace{-.04in}M}\hspace{-.03in}}g}}
\newcommand{\go}{{^{\ _{\hspace{-.04in}0}\hspace{-.03in}}g}}
\newcommand{\pe}{\iota_\e}
\newcommand{\pei}{\iota_{\e_j}}
\newcommand{\matho}{$\ddot{o}$}
\renewcommand{\o}{$\ddot{\mathrm{o}}$}
\newcommand{\wef}{\widehat{w_{\e,\Phi}}}
\newcommand{\dnabla}{\nabla^{^{^{\hspace {-.08in}\delta}}}\hspace{.01in}}
\newcommand{\gpnabla}{\nabla^{^{^{\hspace {-.12in}g_P}}}\hspace{-.02in}}
\newcommand{\genabla}{\nabla^{^{^{\hspace {-.12in}g_\e}}}\hspace{.01in}}
\newcommand{\gnabla}{\nabla^{^{^{\hspace {-.07in}g}}}\hspace{.01in}}
\newcommand{\gonabla}{\nabla^{^{^{\hspace {-.12in}g_0}}}\hspace{.002in}}
\newcommand{\X}{\mathbf{X}}
% interior product symbol
\def\crn#1#2{{\vcenter{\vbox{
        \hbox{\kern#2pt \vrule width.#2pt height#1pt
           }
          \hrule height.#2pt}}}}
\newcommand{\intprod}{\mathchoice\crn54\crn54\crn{3.75}3\crn{2.5}2}
\newcommand{\into}{\mathbin{\intprod}}

% -------------------- general options ----------------------
	%\usepackage[notref,notcite]{showkeys} % labels your labels
	\numberwithin{equation}{section} % sets how equations are numbered 

\title{A Gluing Construction Regarding Point Particles in General Relativity}
\author{Iva Stavrov Allen}

\keywords{Constraint equations; point-particle limit; gluing; conformal method; vector Laplacian; weighted H\o lder spaces.}
\subjclass[2000]{Primary  83C05; Secondary  53C21}

% This stuff appears after the bibliography
\address{Department of Mathematical Sciences, Lewis \& Clark College, Portland, OR 97219, USA. Phone: 1-503-768-7559,\ Fax: 1-503-768-7668.}
\email{istavrov@lclark.edu}

\begin{document}
\begin{abstract}
We develop a gluing construction which adds scaled and truncated asymptotically Euclidean solutions of the Einstein constraint equations to compact solutions with potentially non-trivial cosmological constants. The result is a one-parameter family of initial data which has ordinary and scaled ``point-particle" limits analogous to those of Gralla and Wald \cite{sb}. In particular, we produce examples of initial data which generalize Schwarzschild - de Sitter initial data and  gluing theorems of IMP-type \cite{IMP}.
\end{abstract}
\maketitle

\section{The subject of this paper}
\subsection{The motivation}
Understanding the motion of a body of finite size is generally a difficult problem; classically a simple description of the motion is only possible if one makes a ``point-particle" approximation. ``Point-particles", however, are not possible in general relativity because, informally speaking, too much mass in too little space creates ``black holes". In providing a justification for the MiSaTaQuWa equations (which are believed to govern the motions of a small body in general relativity), Gralla and Wald \cite{sb} interpret the ``point particle limit" in terms of a one parameter family of space-times  $\gamma_\e$ which satisfies various limit conditions as $\e\to 0$. One intuitively thinks of $\e\to 0$ as a ``body shrinking to zero size", the meaning of which we make precise below. The existence of interesting one-parameter families of space-times satisfying the conditions of \cite{sb} is, at the time of writing, not well understood. In this paper we construct  examples of initial data which have, among other features, the potential to produce families of space-times with limit properties of \cite{sb}. 

\subsection{The background on initial data} General relativists model space-time as a four dimensional Lorentzian manifold which satisfies the Einstein field equations. If the space-time is globally hyperbolic, that is, if there exists a spacelike hypersurface $M$ such that every inextendible timelike curve intersects $M$, then the Einstein field equations allow an initial value formulation. The initial data for the problem are the induced Riemannian metric $g$ on our Cauchy slice $M$, and the inherited second fundamental form $K$. The Gauss-Codazzi equations together with the Einstein field equations impose restrictions on $(M,g,K)$, which are known as \emph{the Einstein constraint equations} or simply: \emph{the constraints}. The constraint coming from the Codazzi equation is often referred to as \emph{the momentum constraint}, while the Gauss's equation corresponds to \emph{the Hamiltonian constraint}.  

For simplicity we consider only the case where no matter fields are present, although we allow a non-zero cosmological constant $\Lambda$. 
In this situation the constraints are:
$$\begin{aligned}
&\text{Momentum Constraint:}\ \ \div K-\nabla (\tr_g K)=0\\
&\text{Hamiltonian Constraint:}\ R(g)-|K|^2_g+(\tr_g K)^2-2\Lambda=0;
\end{aligned}$$
the term $R(g)$ refers to the scalar curvature of $(M,g)$. For fixed $M$ and $\Lambda$, solving the constraints for a Riemannian metric $g$  and a symmetric two-tensor $K$ yields initial data which can be evolved to produce space-times \cite{Yvonne}. We work in the smooth category and leave the lower regularity to be examined in subsequent works. 

There are a handful of basic types of initial data. One type, which we refer to as \emph{large-scale} or \emph{cosmological} initial data is used to model the  ``universe as a whole". While there are no significant restrictions on the topology for these solutions to the constraints, in this paper we only consider \emph{compact} solutions. This assumption on the topology simplifies the analysis and the function spaces we use, but there are no reasons to believe that it is absolutely necessary. Another feature of these large-scale solutions is the possibility of a non-zero cosmological constant $\Lambda$. As an example consider de Sitter initial data. 

A second type of data is used to model isolate bodies. One example is data corresponding to the Schwarzschild space-time. It is customary in this context to set $\Lambda=0$, and require that the underlying geometry be \emph{asyptotically Euclidean} (AE) in the following sense. 

\begin{defn}\label{AE}
By \emph{AE initial data} we mean a solution of the constraints $(M_0, g_0, K_0)$ for which 
there exists a compact subset $D\subseteq M_0$ and a diffeomorphism 
$$\so:\bigsqcup \left(\R^3\smallsetminus \bar B_{C}\right)\to \left(M_0\smallsetminus D\right)$$ having the following properties: 
\begin{enumerate}
\item The map $\so$ is a diffeomorphism between a disjoint union of complements of closed balls $\bar B_C=\{x\in\R^3\ |\ |x|\le C\}$ and the set $M_0\smallsetminus D$.
\medbreak
\item There exists a constant $c>0$ such that the canonical components of the pullback data satisfy point-wise estimates
$$\Big|(\so^{*}g_0)_{ij}-\delta_{ij}\Big|\le \frac{c}{|x|} \text{\ \ and\ \ }
\Big|(\so^{*}K_0)_{ij}\Big|\le \frac{c}{|x|^2}.$$
\item For every multiindex $\beta$ with $|\beta|\ge 1$ we have a constant $c_\beta$ and  point-wise estimates
$$\Big|\partial^\beta(\so^{*}g_0)_{ij}\Big|\le\frac{c_\beta}{|x|^{|\beta|+1}}, \ \ \ \Big|\partial^\beta(\so^{*}K_0)_{ij}\Big|\le\frac{c_\beta}{|x|^{|\beta|+2}}.$$
\end{enumerate} 
\end{defn}
Thus the geometry of AE data is progressively more and more ``flat" as we move deeper into an asymptotic end, i.e. as we move away from the ``center of mass of the isolated body". 
In this paper 
we refer to $M_0\smallsetminus D$ as \emph{asymptotia}, and each connected component of $M_0\smallsetminus D$ as an \emph{asymptotic end}, of the AE data. 
We note that each asymptotic end corresponds to a ``point at infinity" in the following sense. If an AE data $(M_0, g_0, K_0)$ has $n$ asymptotic ends, then there is a compact manifold $\overline{M}_0$ and points $I_1, I_2, ..., I_n\in \overline{M}_0$ such that $M_0$ is diffeomorphic to $\overline{M}_0\smallsetminus\{I_1, I_2, ..., I_n\}$ under a diffeomorphism which maps the asymptotic ends to deleted open neighborhoods of $I_1$, ..., $I_n$. One may view $\overline{M}_0$ as a compactification of $M_0$. In this paper we refer to  points $I_1, ..., I_n\in \overline{M}_0$ as the \emph{asymptotic endpoints of $M_0$}.

\subsection{The main example}\label{mainexample} 
The motivating example of \cite{sb} is a one-parameter family of Schwarzschild - de Sitter space-times. We revisit the example here in terms of the corresponding initial data. Following \cite{sb} we suppress the topology of the data and study the situation on a single coordinate patch only. %In our exposition we focus on how the Schwarzschild and the de Sitter initial data can be ``glued" together as solutions to the constraints in order to create the one-parameter family of new solutions, namely the Schwarzschild - de Sitter initial data. 

Both the Schwarzschild and de Sitter space-times admit a Cauchy slice on which   the second fundamental form vanishes and the metric can  be expressed as
$$ds^2=\left(1-F(r)\right)^{-1}dr^2+r^2d\omega^2$$
in some spherical coordinates. The constraints for such data reduce to the Hamiltonian constraint
$R(g)=2\Lambda$ which, after expressing the scalar curvature in terms of $F$, becomes a linear ODE:
\begin{equation}\label{babyH}
F+rF'=\Lambda r^2.
\end{equation}
A homogeneous solution of  this ODE is $F_0(r)=\tfrac{2M}{r}$ ($M$ fixed), a function which corresponds to the Schwarzschild initial data.
It is customary to think of  the parameter $M>0$ as the mass of the isolated body whose center of mass is at $r=0$ and the event horizon is at $r=2M$.  \emph{Scaling} $F_0$ by a factor of $\e$ shrinks the mass of the body down to $M\e$ and  ``brings the asymptotia and the event horizon closer in". Thus, one may interpret the family of functions $\e F_0$, $\e\to 0$, as modeling a body which ``shrinks to zero size". 

One particular solution of \eqref{babyH} is $F(r)=\tfrac{\Lambda}{3}r^2$, a function corresponding to the de Sitter data. Adding the ``shrinking" Schwarzschild body to the de Sitter background amounts to  considering the general solution of \eqref{babyH}:  
$$F_\e(r)=F(r)+\e F_0(r)=\tfrac{\Lambda}{3}r^2+\tfrac{2M\e}{r}.$$
The solution $F_\e$ corresponds to the one-parameter family of metrics $g_\e$ for Schwarzschild - de Sitter data: 
\begin{equation}\label{Schw-deSitter}
ds^2=\left(1-\tfrac{\Lambda}{3}r^2-\tfrac{2M\e}{r}\right)^{-1}dr^2+r^2d\omega^2.
\end{equation}
Let us proceed by examining what happens if we take $\e\to 0$, that is, if we take  a ``point-particle limit"  of $g_\e$. First of all, we note that the $2M\e$-term in \eqref{Schw-deSitter} converges to zero (away from $r=0$) and that $g_\e$ approach the de Sitter metric $ds^2=\left(1-\tfrac{\Lambda}{3}r^2\right)^{-1}dr^2+r^2d\omega^2$. This is to be expected as the contribution of a ``point-particle" to a large-scale geometry ought to be negligible. What is perhaps more interesting is ``zooming in" and paying attention to small scale behavior near $r=0$. To that end we consider the metric $\tfrac{1}{\e^2}g_\e$ in scaled coordinates $(\rho,\omega)$ where $\rho=\tfrac{r}{\e}$:  
$$ds^2=\left(1-\e^2\tfrac{\Lambda}{3}\rho^2-\tfrac{2M}{\rho}\right)^{-1}d\rho^2+\rho^2d\omega^2.$$
In the limit as $\e\to 0$ we recover the original Schwarzschild body!
 
\subsection{The two point-particle limit properties} Let us now precisely state   the two limit properties illustrated above. The terminology we use is motivated by that of \cite{sb}.

Let $(M,g,K)$ be large-scale initial data, let $S\in M$ and let $(M_0,g_0, K_0)$ be AE initial data.  In the example above $(M,g,K)$ corresponds to de Sitter data with $S$ as the origin in $\R^3$, and $(M_0, g_0, K_0)$  the Schwarzschild data. A family $(M_\e, g_\e, K_\e)$ of initial data (Schwarzschild-de Sitter in the example) obeys point-particle limit properties with respect to $(M,g,K)$, $(M_0,g_0,K_0)$ and $S\in M$ if the following hold.

\begin{enumerate}
\item  {\bf The ordinary point-particle limit property.} Let $\mathbf{K}\subseteq M\smallsetminus\{S\}$ be a compact set. For small enough $\e$ there exist embeddings $i_\e:\mathbf{K}\to M_\e$ such that for all $k\in\mathbb{N}\cup\{0\}$ 
$$\|(i_\e)^*g_\e-g\|_{C^{k}(\mathbf{K},g)}\to 0,\ \ \|(i_\e)^*K_\e-K\|_{C^{k}(\mathbf{K},g)}\to 0 \text{\ \ as\ \ }\e\to 0.$$

\medbreak
\item {\bf The scaled point-particle limit property.} Let $\mathbf{K}\subseteq M_0$ be a compact set. 
For small enough $\e$ there exist embeddings $\iota_\e:\mathbf{K}\to M_\e$ such that for all $k\in\mathbb{N}\cup\{0\}$
$$\left\|\tfrac{1}{\e^2}\left(\iota_\e\right)^*g_\e-g_0\right\|_{C^k(\mathbf{K},g_0)}\to 0,\
\ \left\|\tfrac{1}{\e}\left(\iota_\e \right)^*K_\e- K_0\right\|_{C^k(\mathbf{K},g_0)}\to 0 \text{\ \ as\ \ }\e\to 0.$$ 
\end{enumerate}

From this point on every use of the word ``shrinking" should be interpreted in terms of the scaled point-particle limit property. 

\subsection{The results} Our main result is a gluing construction which adds scaled and truncated AE initial data to compact cosmological initial data. The outcome of the construction is a one-parameter family of initial data which has ordinary and scaled point-particle limits as described above. Our analytical work is based upon the IMP-gluing procedure \cite{IMP} and, in particular, the conformal method. We work with data $(M,g, K)$ and $(M_0, g_0, K_0)$ for which $\tau:=\tr_g K$ and $\tr_{g_0}K_0$ are constant. The advantage of considering such constant mean curvature (CMC) data is that the conformal method becomes particularly user-friendly. 

The IMP-gluing techniques typically require certain non-degeneracy conditions (e.g. \cite{engineering}, \cite{IMP}) which  are generically satisfied \cite{KIDS}.  In our case we need to assume the following conditions on the large-scale initial data:
\begin{description}
\item[{\sc CKVF assumption}] $(M,g)$ has no conformal Killing vector fields;
\item[{\sc Injectivity assumption}] The operator $\Delta_g-\left(|K|^2_g-\Lambda\right)$ on $(M,g)$ has trivial kernel. 
\end{description} 
We address these assumptions in more detail in an appendix to this paper. 
For now note that if the cosmological constant $\Lambda$ is relatively small, that is $\Lambda<|K|^2_g$,  the {\sc Injectivity Assumption} is unnecessary due to   the Maximum Principle. 

The following is our main result.

\begin{thm}\label{MAINTHM}
Let $(M,g,K)$ be a (not necessarily connected) compact and smooth CMC solution of the vacuum constraints corresponding to a cosmological constant $\Lambda$ such that  the {\sc CKVF} and the {\sc Injectivity assumption} are satisfied. Furthermore, let $(M_0, g_0, K_0)$ be a smooth AE CMC solution of the vacuum constraints with no cosmological constant. Let $n$ be the number of asymptotic ends of $(M_0,g_0)$, let $S_1, S_2,..., S_n\in M$ and let $I_1, I_2, ..., I_n\in\overline{M}_0$ be the asymptotic endpoints of $M_0$. Finally, let $\nu\in\left(\tfrac{3}{2},2\right)$.

There exists $\e_0>0$ such that for each $\e\in\left(0,\e_0\right)$ there is a  
compact and smooth CMC solution 
$(M_\e, g_{\e}, K_{\e})$  of the vacuum constraints corresponding to the cosmological constant $\Lambda$ 
with the following properties.
\begin{enumerate}
\item 
$M_\e$ is diffeomorphic to an $n$-fold connected sum of $M$ and $\overline{M}_0$ obtained by excising small balls around $S_1, ..., S_n$  and $I_1, ..., I_n$ and identifying the boundaries of the balls at $S_1$ and $I_1$, $S_2$ and $I_2$,..., $S_n$ and $I_n$.
\medbreak
\item For $\mathbf{K}\subseteq M\smallsetminus\{S_1, S_2, ...., S_n\}$ a compact set and $\e$ small enough, depending on $\mathbf{K}$, there exist embeddings $i_\e:\mathbf{K}\to M_\e$ such that for all $k\in\mathbb{N}\cup\{0\}$ we have
$$\|(i_\e)^*g_\e- g\|_{C^{k}(\mathbf{K},g)}=O(\e^{\nu/2}),\ \ \  
\|(i_\e)^*K_\e- K\|_{C^{k}(\mathbf{K},g)}=O(\e^{\nu/2}) \text{\ \ \ as\ \ }\e\to 0.$$
\medbreak
\item For $\mathbf{K}\subseteq M_0$ a compact set and $\e$ small enough, depending on $\mathbf{K}$, there exist embeddings $\iota_\e:\mathbf{K}\to M_\e$ such that for all $k\in\mathbb{N}\cup\{0\}$ we have
$$\|\tfrac{1}{\e^2}(\iota_\e)^*g_\e- g_0\|_{C^{k}(\mathbf{K},g_0)}=O(\e^{1-\nu/2}),\  
\|\tfrac{1}{\e}(\iota_\e)^*K_\e- K_0\|_{C^{k}(\mathbf{K},g_0)}=O(\e^{1-\nu/2}) \text{\ \ \ as\ \ }\e\to 0.$$
\end{enumerate}
\end{thm}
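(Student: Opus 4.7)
\emph{Setup and approximate data.} The plan is to apply the conformal method on the glued manifold $M_\e$, working in weighted H\"older spaces of weight $\nu\in(\tfrac{3}{2},2)$, and following the IMP strategy \cite{IMP}. First I would construct $M_\e$ topologically as in~(1) by excising geodesic balls of radius $\sim\sqrt{\e}$ around each $S_k$ and, under the scaling $(g_0,K_0)\mapsto(\e^2 g_0,\e K_0)$, truncating each asymptotic end of $M_0$ at the matching $\sim\sqrt{\e}$ scale. A standard cutoff interpolation in the resulting thin neck then produces a smooth approximate metric $\tilde g_\e$ and symmetric tensor $\tilde K_\e$ whose constraint errors are supported in the neck. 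Compatibility of the two limits~(2) and~(3) forces $\tr_{g_0}K_0=0$ (take traces of the two limit identities; otherwise $\e\tau_\e$ could not be bounded), so $\tr_{\tilde g_\e}\tilde K_\e=\t$ up to negligible neck-supported error, and the CMC framework applies.

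\emph{Momentum constraint.} Next I would decompose $\tilde K_\e=\tilde\s_\e+\tfrac{\t}{3}\tilde g_\e$ and seek the exact transverse-traceless part in the form $\tilde\s_\e+\L_{\tilde g_\e}W_\e$, where $W_\e$ solves $\L_{\tilde g_\e}^*\L_{\tilde g_\e}W_\e=-\div_{\tilde g_\e}\tilde\s_\e$ and $\L$ denotes the conformal Killing operator. The {\sc CKVF} assumption on $(M,g)$, together with the trivial-kernel property of the conformal vector Laplacian on the scaled truncated AE ends (inherited from its AE Fredholm theory), would yield $\e$-uniform invertibility of $\L^*\L$ in the chosen weighted spaces and hence a uniformly small correction $W_\e$, controlled by the gluing error on the right-hand side.

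\emph{Hamiltonian constraint.} I would then write $g_\e=\phi_\e^4\tilde g_\e$ and $K_\e=\phi_\e^{-2}(\tilde\s_\e+\L W_\e)+\tfrac{\t}{3}g_\e$, reducing what remains to the Lichnerowicz equation for $\phi_\e>0$. Linearizing at $\phi_\e\equiv 1$, the principal part is $\Delta_{\tilde g_\e}-(|\tilde K_\e|^2-\Lambda)$; the {\sc Injectivity Assumption} controls the cosmological side, while the Maximum Principle controls the scaled AE side (where the zeroth-order term is effectively non-negative), giving $\e$-uniform invertibility of the linearized operator in the weighted space. A contraction-mapping (or implicit function) argument in a small weighted ball then produces $\phi_\e$ close to $1$ and gives the exact solution $(g_\e,K_\e)$.

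\emph{Convergence rates and main obstacle.} Finally, the size of $\phi_\e-1$ and $W_\e$ in the weighted norm is dictated by the neck-supported gluing error; converting weighted bounds into pointwise $C^k$ estimates away from the neck should yield the rates $O(\e^{\nu/2})$ on the cosmological side and $O(\e^{1-\nu/2})$ on the scaled AE side, which are precisely the decay statements in~(2) and~(3). The hard step, I expect, is the $\e$-uniform invertibility in the weighted spaces: the weight $\nu$ must lie simultaneously in the Fredholm ranges for $(M,g)$ and for the AE geometry, and the neck radius must be tuned so that the gluing error decays faster than the norm of the inverse blows up as the neck shrinks. The restriction $\nu\in(\tfrac{3}{2},2)$ together with a $\sqrt{\e}$-scale neck is exactly what balances these two competing requirements.
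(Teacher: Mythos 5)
Your outline follows the same route as the paper: the IMP-style CMC conformal method on a glued manifold, with a cutoff interpolation at the $\sqrt{\varepsilon}$ scale, the momentum constraint repaired by solving $\D_\varepsilon^*\D_\varepsilon X_\varepsilon=(\div_{g_\varepsilon}\mu_\varepsilon)^{\sharp}$, the Hamiltonian constraint handled by a contraction argument for the Lichnerowicz equation about $\phi\equiv 1$, and the rates $O(\varepsilon^{\nu/2})$, $O(\varepsilon^{1-\nu/2})$ read off from weighted estimates on the two sides. The setup and the nonlinear steps are essentially as in the paper (one cosmetic difference: the paper excises balls of radius $C\varepsilon$ and keeps a long gluing annulus $C\varepsilon\le|x|\le C^{-1}$, with the cutoffs localized at $\sqrt{\varepsilon}$; also $\tr_{g_0}K_0=0$ follows directly from the AE decay of $K_0$, no limit-compatibility argument is needed).

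The genuine gap is in the step you yourself flag as hard: the $\varepsilon$-uniform invertibility of the vector Laplacian and of the linearized Lichnerowicz operator. Your justification --- kernel triviality on $(M,g)$ from {\sc CKVF} plus triviality on the AE piece ``inherited from its AE Fredholm theory'' --- is not a uniform-in-$\varepsilon$ statement, because the glued manifolds degenerate as the neck pinches and neither endpoint geometry controls what happens there. The paper proves uniformity by a blow-up/contradiction (``exhaustion'') argument: assuming the estimate fails along $\varepsilon_j\downarrow 0$, one normalizes the weighted norm, locates the point where $w_{\varepsilon_j}^{\nu-1}|X_j|$ peaks, and passes to a limit in one of \emph{three} model geometries depending on whether the peak stays on the $M$ side, on the $M_0$ side, or in the neck; the third regime rescales to the Euclidean annulus and produces a nontrivial kernel element of $L_\delta$ (resp.\ a harmonic function) on $\mathbb{R}^3\smallsetminus\{0\}$ lying in $C^{0,0,\nu}(\mathbb{R}^3;0,\infty)$. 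It is precisely here that $\nu>\tfrac{3}{2}$ is used (the boundary term in the integration by parts scales like $\rho^{3-2\nu}$), to conclude that the limit is a conformal Killing field on $\mathbb{R}^3$ decaying at infinity (impossible) or a decaying harmonic function (excluded by the Maximum Principle); the $M$-side case is killed by {\sc CKVF}/{\sc Injectivity} and the $M_0$-side case by the Christodoulou-type theorem and the Maximum Principle applied to $\Delta_{g_0}-|\mu_0|^2_{g_0}$. Your proposal omits this neck/Euclidean regime entirely, and without it the uniform bound on the inverses --- and hence the smallness of $W_\varepsilon$ and $\phi_\varepsilon-1$, on which parts (2) and (3) rest --- is not established. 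Relatedly, the whole argument presupposes the construction of weighted H\"older spaces on $M_\varepsilon$ with Schauder estimates uniform in $\varepsilon$ (scaled charts adapted to the weight $w_\varepsilon$), which is a substantial piece of the paper that your plan takes for granted.
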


Our result provides examples of initial data which are generalizations of Schwarzschild - de Sitter initial data. In future work we intend to investigate if one could use these examples to 
generate one-parameter families of space-times required in \cite{sb}. 

Another consequence of our main theorem is that one can use \emph{any} AE initial data as a ``shrinking prefabricated bridge" connecting several compact initial data so long as the compact data satisfy {\sc CKVF} and  {\sc Injectivity assumptions}. This generalizes IMP-type results  \cite{IMP} in which the geometry of the gluing region is, essentially, a shrinking Cauchy slice in the Kruskal extension of Schwarzschild space-time. The idea behind this application of our result is related to the work of D. Joyce in \cite{Joyce}.

\subsection{The gluing method} Here we review the steps of a typical initial data gluing procedure. 
\begin{itemize}
\item {\bf The topological gluing.} For a connected sum of compact initial data one excises balls of radius ``$\e$"  and  identifies the surrounding small annular regions, $\mathcal{A}_{1,\e}$ and $\mathcal{A}_{2,\e}$, using  inversion\footnote{IMP-construction does not literally involve inversion. Small neighborhoods of two points, $S_1$ and $S_2$ are first conformally blown up to become asymptotic cylinders, which are then truncated and glued to each other along what used to be the geodesic polar coordinate $r$. Up to an additive constant IMP-procedure identifies the logarithm of the geodesic distance from $S_1$, i.e. $\ln(r_1)$, with the negative of the logarithm of the geodesic distance from $S_2$, $-\ln(r_2)$. In effect, $r_1$ is identified with (a small constant multiple of) $\tfrac{1}{r_2}$. This  is nothing but inversion.}. 
The main difference between our work and the gluing procedures in the literature thus far is that we do not use inversion but directly identify the nearly Euclidean shrinked asymptotia (truncated to an annular shape) with an ``almost" Euclidean annular region near the center of a geodesic normal coordinate patch in the large-scale data. 
\medbreak
\item {\bf The approximate initial data.} One uses cut-offs with differentials supported in the annular regions $\mathcal{A}_{1,\e}$, $\mathcal{A}_{2,\e}$ to combine the Riemannian structures and the second fundamental forms. The derivatives of the cut-offs make the resulting ``data"  $(M_\e, g_\e, K_\e)$ violate the constraints in such a way that the ``size" of the violation approaches zero  as $\e\to 0$. We note that $(M_\e, g_\e, K_\e)$ can at least be made CMC by killing off the trace-free part $\mu:=K-\frac{\tau}{3}g$ in the region where the metric $g_\e$ transitions from $g\big|_{\mathcal{A}_{1,\e}}$ to $g\big|_{\mathcal{A}_{2,\e}}$. 
\medbreak
\item {\bf Repairing the momentum constraint.} The idea here is to perturb $K_\e$ or, rather, its trace-free part $\mu_\e:=K_\e-\tfrac{\tau}{3}g_\e$ so that the momentum constraint  is satisfied. This is done using the conformal Killing operator $\D_\e$ and the vector Laplacian $L_\e=\D_\e^*\D_\e$ on $(M_\e,g_\e)$;  
the perturbation $\me$  is given by 
$\me:=\mu_\e+\D_{\e} X_\e$, where $L_\e X_\e=(\div_{g_\e}\mu_\e)^{\sharp}$. 
\medbreak
\item {\bf The Lichnerowicz equation.} The  Hamiltonian constraint can be  repaired using  conformal changes which  do not break the momentum constraint we just fixed; see \cite{bartnik-isenberg} for details.  The conformal change we make in our paper is of the form  $g_\e\to \phi_\e^4 g_\e$, $\mu_\e+\tfrac{\tau}{3}g_\e\to \phi_\e^{-2}\me+\tfrac{\t}{3}\phi_\e^4g_\e$, where $\phi_\e$ is a positive solution of the \emph{Lichnerowicz equation}
\begin{equation}\label{LICHN}
\Delta_{g_\e}\phi_\e - \tfrac{1}{8}R(g_\e)\phi_\e+\tfrac{1}{8}|\me|_{g_\e}^2\phi_\e^{-7}+\left(\tfrac{\Lambda}{4}-\tfrac{\t^2}{12}\right)\phi_\e^5=0.
\end{equation}
\end{itemize}
By making careful estimates in geometrically-adapted \emph{weighted H\"older spaces} one shows that the resulting one-parameter family of glued data obeys any desired limit properties.

\subsection{The organization of the paper}
The presentation of our gluing procedure begins with {\sc  A List of Notational Conventions}. The details of the topological gluing and the creation of the approximate data can be found in the following section, 
{\sc The Approximate Data}. 

We proceed in the section 
 {\sc Weighted Function Spaces} to give a detailed description of several weighted H\"older spaces used in our construction. Not only do these spaces permit a unified approach to showing that \emph{both} point-particle limit properties hold, they also provide a context in which many relevant facts
(e.g.~Theorem \ref{ell-M;S} and its consequence \eqref{VL:est2})
 can be most easily demonstrated and understood. It is our opinion that their importance warrants a clear, albeit somewhat lengthy, exposition.

The remainder of the paper is dedicated to the analysis of the PDE's needed to repair the momentum and the Hamiltonian constraints. This material is organized in sections {\sc Repairing the Momentum Constraint} and {\sc The Lichnerowicz Equation}. Each of these two sections ends with an examination of the point-particle limit properties. 

Our paper has two appendices, the first of which is {\sc On the CKVF and  Injectivity assumptions}. The second appendix concerns (the kernel of) the Euclidean vector Laplacian $L_\delta$ which is involved in repairing the momentum constraint. We have found two ways of executing this step of the proof, one being the direct computation of the kernel. The approach included in the main body of our paper is more elegant and  more in the spirit of our other arguments. The downside of the indirect approach is that it only works for a narrower set of weights (which turns out to be necessary elsewhere in the paper). In this respect the direct computation of $\mathrm{ker}(L_\delta)$ is more optimal. Having found no resource in the literature with an explicit treatment of $L_\delta$ and its kernel we have decided to append our computation of $L_\delta$ to this article. Thus, our paper ends with {\sc Appendix B. The Euclidean Vector Laplacian in Spherical Coordinates}.

\subsection{Acknowledgments} I use this opportunity to thank Paul T.~ Allen and James Isenberg for encouragement during my transition into this area of mathematical general relativity. The majority of the work for this particular project was done during a Junior Sabbatical Leave from Lewis and Clark College in 2008, while visiting the Geometric Analysis Group of The Albert Einstein Institute in Golm, Germany. I am particularly grateful to Prof.~ Gerhard Huiskin for his generous support of my visit. I also thank V. Moncrief for helpful discussions regarding the material in {\sc Appendix A.} 

\section{A List of Notational Conventions}
\begin{itemize}
\item We let $f(\e,x)\cle g(\e,x)$
mean that there is some constant $C>0$ such that for all points $x$ on a specified domain, and all sufficiently small $\e>0$ we have $f(\e, x)\le C g(\e, x)$. In particular, the symbol $\cle$ is to be interpreted as being uniform in $\e$. We use $f(\e, x)\sim g(\e, x)$ as an abbreviation for $g(\e, x)\cle f(\e, x)\cle g(\e, x)$. In case of additional parameters we let  $$f(\e, x, \Phi)\cle g(\e, x, \Phi) \text{\ \ {\it independently of}\ \ } \Phi$$ mean that the constant $C>0$ can be chosen independently of $\Phi$; the same comment applies to $\sim$. 
\medbreak
\item We use letters $k,\alpha, \nu$ in the context of weighted function spaces. Unless specifically stated otherwise, the reader should assume that 
$k\in \N\cup\{0\}$, $\alpha\in[0,1)$ and $\nu\in \R$. 
\medbreak
\item We use $p,q\in \N\cup\{0\}$ in denoting the type of a tensor. Unless specifically stated otherwise, a tensor $T$ should be assumed to be of type $T^{i_1...i_p}_{j_1...j_q}$. If the tensor type is clear from context $p,q$ may not be emphasized. 
\medbreak
\item We use $\delta$ to denote the Euclidean metric. 
\medbreak
\item In principle, $B_r$ (resp.~ $\bar{B}_r$) denotes an open (resp.~ closed) ball of radius $r$ in an ambient space which should be clear from the context. Unless explicitly stated otherwise, the reader should assume that the ball is centered at the origin. 
\medbreak
\item We use $|\cdot|$ to denote the point-wise norm of tensors. This symbol is typically decorated by an index which reveals which metric has been used to compute the norm. If the symbol $|\cdot|$ is left undecorated the reader should assume that we are discussing the Euclidean norm or the absolute value. 
\medbreak
\item The symbols for geometric differential operators are decorated by a superscript or a subscript which indicates the metric with respect to which the operators are defined. For example, $\gpnabla$ denotes the covariant differentiation with respect to some metric $g_P$, while $\Delta_\delta$ denotes the Euclidean scalar Laplacian. 
\end{itemize}

\section{The Approximate Data}\label{glue:sec}
Let $(M,g,K)$ be a smooth compact CMC solution of the vacuum constraints with cosmological constant $\Lambda$. Assume that $(M,g,K)$ satisfy {\sc CKVF} and {\sc Injectivity assumptions}. Let 
$\tau:=\tr_g K$ and $\mu:=K-\frac{\tau}{3}g$. The 2-tensor $\mu$ is trace-free and, by virtue of the momentum constraint, also divergence-free.

Let $(M_0, g_0, K_0)$ be a smooth asymptotically Euclidean  CMC vacuum initial data with no cosmological constant; adopt the notation established in Definition \ref{AE}. \emph{For simplicity we outline the gluing procedure assuming $(M_0,g_0)$ has one asymptotic end}.  For sufficiently large $R$ we set $\Oo_{R}:=\so\big(\R^3\smallsetminus \bar{B}_R\big)$. The AE-estimates of Definition \ref{AE} imply $\tr_{g_0} K_0=0$; let $\mu_0:=K_0$. 

\subsection{Scaling}   
Consider $(M_0,\e^2g_0,\e K_0)$; these data also satisfy the constraints. 
By precomposing $\so$ with 
$x\mapsto \tfrac{x}{\e}$
one obtains ``scaled" coordinates $\se$ for the asymptotia of $M_0$:
\begin{equation*}
\se:\big(\R^3\smallsetminus \bar{B}_{C\e}\big)\to M_0.
\end{equation*}
We let $\rho(X):=|\se^{-1}(X)|=\frac{|{\so^{-1}}(X)|}{\e}$ denote our new radial function in the asymptotia. 
Note the following AE-estimates: 
\begin{align}
\label{asymptg}
&\Big|(\se^{*}\e^2g_0)_{ij}-\delta_{ij}\Big|\cle \frac{\e}{\rho} 
\mathrm{\ \ \ and\ \ \ }
\Big|\partial^\beta(\se^{*}\e^2g_0)_{ij}\Big|\cle\frac{1}{(\frac{\rho}{\e})^{|\beta|+1}}\cdot\frac{1}{\e^{|\beta|}}\cle\frac{\e}{\rho^{|\beta|+1}}
\mathrm{\ \ \ if\ \ \ }|\beta|\ge 1,\\
\label{asymptK}
&\Big|\partial^\beta(\se^{*}\e K_0)_{ij}\Big|\cle \frac{1}{\e}\cdot\frac{1}{(\frac{\rho}{\e})^{|\beta|+2}}\cdot\frac{1}{\e^{|\beta|}}
\cle\frac{\e}{\rho^{|\beta|+2}} \mathrm{\ \ if\ \ } |\beta|\ge 0.
\end{align}
By truncating our scaled AE data we mean considering  
$M_0\smallsetminus  \Oe _{C^{-1}}$
where 
$$ \Oe _{C^{-1}}:=\ \se\Big(\big\{x\in \R^3\big|\ |x|>C^{-1}\big\}\Big)=\Oo_{(\e C)^{-1}}.$$

\subsection{The topological gluing}\label{gluetop:sec}
Consider $S\in M$ and geodesic normal coordinates $\sM$ with respect to $(M,g)$ centered at $S$. By increasing the value of $C$ if necessary we may assume that these coordinates are defined on  $B_{C^{-1}}$. We have 
\begin{equation}\label{metricM}
\big(\sM^* g\big)_{ij}=\delta_{ij}+l_{ij}(x)
\end{equation}
where $l_{ij}(x)=O(|x|^2)$ as $|x|\to 0$. Moreover, the first derivatives of $l_{ij}$ satisfy $\partial l_{ij}(x)=O(|x|)$ while the higher derivatives satisfy  $\partial^\beta l_{ij}(x)=O(1)$  as $|x|\to 0$. 
We proceed by excising a small geodesic ball $B^M_{C\e}:=\sM\left(B_{C\e}\right)$.
\begin{defn}
Let $\sim_\e$ be the smallest equivalence relation on $\widehat{M_\e}:=\Big(M\smallsetminus B^{M}_{C\e}\Big)\cup \Big(M_0\smallsetminus  \Oe _{C^{-1}}\Big)$ for which $P\sim_\e Q$ whenever $\sM^{-1}(P),\ \se^{-1}(Q)$ are both defined and satisfy $\sM^{-1}(P)\ =\ \se^{-1}(Q)$.
We define the family of manifolds $M_\e$ by
$M_\e:=\widehat{M_\e}\big/_{{\sim}_\e}.$
\end{defn}
There are natural quotient maps
\begin{equation*}
\pM:\Big(M\smallsetminus B^M_{C\e}\Big)\to M_\e \mathrm{\ \ and\ \ }
\pe:\Big(M_0\smallsetminus  \Oe _{C^{-1}}\Big) \to M_{\e}
\end{equation*} 
which we may use to put coordinates on $M_\e$.  Note that 
$\sigma:=\pM\circ\sM=\pe\circ\se$
maps the annular region $\A_\e:=\{x\in \R^3\ |\ C\e\le |x| \le C^{-1}\}$ to the \emph{gluing region}  
\begin{equation*}
\Big[\se\big(\A_\e\big)\Big]\Big/_{\sim_\e}=\Big[\sM(\A_\e)\Big]\Big/_{\sim_\e}\subseteq M_\e.
\end{equation*}
The map $\s$ defines our preferred coordinates for the gluing region. For the purposes of dealing with point-particle limits note that for 
any compact subset $\mathbf{K}\subseteq M\smallsetminus\{S\}$ and sufficiently small  $\e$ the quotient map $\pM$ gives rise to an embedding of $\mathbf{K}$ into $M_\e$. Likewise, if $\mathbf{K}\subseteq M_0$ is a compact subset then for $\e$ small enough the quotient map $\pe$ 
defines an embedding of $\mathbf{K}$ into $M_\e$. 

\subsection{Combining the metrics} We use partition of unity to combine the metrics $g$ and $\e^2g_0$. Let $\chi$ be a decreasing smooth cut-off function on $\R$ such that 
\begin{equation*}
\chi\equiv 1 \mathrm{\ \ on\ \ } (-\infty,1] \mathrm{\ \ \ \ and\ \ \ \ }\chi\equiv 0\mathrm{\ \ on\ \ } [4,+\infty).
\end{equation*} 
We define $g_\e$ on $M_\e$ to be the unique metric for which  $\pM^* g_\e=g$ and $\pe^*g_\e=\e^2g_0$ \emph{away} from the gluing region while in the gluing region itself we have
\begin{equation}\label{defn-g}
\sigma^*g_\e\Big|_x=\chi\Big(\tfrac{|x|}{\sqrt{\e}}\Big)\big[\se^*\e^2g_0\big]\Big|_x+
			(1-\chi)\Big(\tfrac{|x|}{\sqrt{\e}}\Big)\big[\sM^*g\big]\Big|_x.
\end{equation}
Loosely speaking, our metric $g_\e$ ``matches" with $g$ on the part of the gluing region given by $|x|\ge 4\sqrt{\e}$ and it ``matches" with $\e^2g_0$ on $|x|\le \sqrt{\e}$. 

\subsection {Combining the second fundamental forms}\label{mu:glue} The cut-off $\chi$ is also involved in combining the trace-free parts, $\mu$ and $\e\mu_0$, of $K$ and $\e K_0$.
We define $\mu_\e$ to be the unique symmetric 2-tensor for which $\pM^*\mu_\e=\mu$ and $\pe^*\mu_\e=\e \mu_0$ \emph{away} from the gluing region while in the gluing region itself we have 
\begin{equation}\label{defn-mu}
\sigma^*\mu_\e\Big|_x=\chi\Big(\tfrac{6}{\sqrt{\e}}|x|-2\Big)\big[\se^*\e\mu_0\big]\Big|_x+
				(1-\chi)\Big(\tfrac{3}{4\sqrt{\e}}|x|-2\Big)\big[\sM^*\mu\big]\Big|_x.
\end{equation}
It follows from (\ref{defn-mu}) that  on the part of the gluing region given by
\begin{itemize}
\item $\sqrt{\e}\le |x|\le 4\sqrt{\e}$ we have $\mu_\e\equiv 0$;
\medbreak
\item $|x|\ge 4\sqrt{\e}$ we have that $\mu_\e$ is a multiple of (the pullback of) $\mu$; on $|x|\ge 8\sqrt{\e}$ the tensor $\mu_\e$ exactly ``matches" $\mu$;
\medbreak
\item $|x|\le \sqrt{\e}$ we have that $\mu_\e$ is a multiple of (the pullback of) $\e\mu_0$; on $|x|\le\tfrac{1}{2}\sqrt{\e}$ the tensor $\mu_\e$ exactly ``matches" $\e\mu_0$.
\end{itemize}
Since $\mu$ and $\mu_0$ are traceless with respect to $g$ and $g_0$ respectively,  
we see that 
$\tr_{g_\e}\mu_\e =0$.
It possible that $\div_{g_\e}\mu_\e\neq 0$, with its support in the part of the gluing region given by $\tfrac{1}{2}\sqrt{\e}\le |x|\le 8\sqrt{\e}$. In other words, the data $\left(M_\e, g_\e, \mu_\e+\tfrac{1}{3}\t g_\e\right)$ where 
$\t=\tr_g K$
is only an \emph{approximate solution} of the constraints.

\section{Weighted Function Spaces}
Our analysis relies on weighted H\o lder spaces described below. A reader may find it useful to examine the geometric relationship between the four types of spaces. 
\begin{enumerate}
\item Weighted H\o lder spaces on $(M_\e,g_\e)$ which in some sense measure the (point-wise) size of tensors relative to the proximity to the gluing region; 
\medbreak
\item Weighted H\o lder spaces on $(M\smallsetminus \{S\},g)$ which keep track of (point-wise) decay and/or growth of tensors in terms of the geodesic distance from $S$; 
\medbreak
\item Weighted H\o lder spaces on $(M_0,g_0)$ which keep track of the decay of tensors with respect to  the radial function of the ``asymptotia".
\medbreak
\item Weighted H\"older spaces on the Euclidean $\left(\R^3\smallsetminus\{0\},\delta\right)$ which keep track of the decay and/or growth of tensors near $0$ and $\infty$ in terms of the radial function $r:x\mapsto |x|$.
\end{enumerate}
All four of these function spaces are defined using preferred atlases for the underlying manifolds. The charts in these atlases are in turn determined by the respective weight functions. The weight functions are in no way ``canonical" - we choose them so that they are conducive to obtaining the desired results. We only need the weight functions to obey properties such as those stated in our Proposition \ref{prop-covers}. This  approach to weighted H\"older spaces has been highly influenced by \cite{CD} (see the appendix on ``scaling properties") and 
\cite{lee}, and we thank the authors of these two papers for addressing the function spaces in detail. Since the weighted spaces on $(M_\e, g_\e)$ are the most delicate of all we present their construction in detail; the remaining weighted spaces are discussed only briefly.

\subsection{The weight function for $(M_\e, g_\e)$}\label{weight-sec} 
Consider smooth, increasing, positive functions $w_{0,\R}$ and $w_{M,\R}$ on $\R$ for which 
$$w_{0,\R}(s)= \begin{cases}
			s &\mathrm{if}\ \ s\ge 2C\\
			C &\mathrm{if}\ \ s\le \frac{3}{2}C,
			\end{cases}
	\mathrm{\ \ and\ \ }
w_{M,\R}(s)= \begin{cases}
			C^{-1} &\mathrm{if}\ \ s\ge (\frac{3}{2}C)^{-1}\\
			s &\mathrm{if}\ \ s\le (2C)^{-1}.
			\end{cases}		
			$$
We define the weight function $w_\e$ to be the unique function on $M_\e$ which satisfies 
$$\Big(w_\e\circ\s\Big) (x)=
\begin{cases}
w_{M,\R}(|x|) &\mathrm{\ \ if\ \ } |x|\ge 2C\e\\
\e\cdot w_{0,\R}\big(\tfrac{|x|}{\e}\big) & \mathrm{\ \ if\ \ } |x|\le (2C)^{-1} 
\end{cases}
$$
for $x\in \A_\e$ and which is constant away from $\s(\A_\e)$. In other words, 
\begin{itemize}
\item $w_\e\equiv C^{-1}$ on the component of $M_\e\smallsetminus \s(\A_\e)$ arising from $M$;
\item  $w_\e$ on $\s(\A_\e)$ in some sense describes the distance in $(M,g)$ away from $S$. Equivalently, one may think of $w_\e$ on $\s(\A_\e)$ as being the radial function in the asymptotia of $(M_0,\e^2g_0)$;
\item $w_\e\equiv C\e$ on the connected component of 
$M_\e\smallsetminus \s(\A_\e)$ arising from $M_0$.
\end{itemize}

\subsection{Special atlases for $M_\e$}\label{atlases-sec} For each $M_\e$ we  construct two atlases, one a refinement of the other. Within these atlases we distinguish three types of charts, depending on which region of $M_\e$ they cover. The charts are described in detail below, but we take the opportunity to outline the three types first. One type ({\sc Type $\G$}) are the charts whose images are well within the gluing region. To be more precise, these charts cover 
$\G_\e\subseteq\s(\A_\e)$  where $\G_\e$ is determined by 
$$\s^{-1}(\G_\e)=\left\{x\in \R^3\ \big|\ 4C\e<|x|<(4C)^{-1}\right\}.$$ There are two connected components of $M_\e\smallsetminus \G_\e$: one arising from $M$ and one arising from $M_0$. By {\sc Type $M$} we mean the charts (described below) which cover the component arising from $M$ and by {\sc Type $M_0$} we mean the charts which cover the component arising from $M_0$. 

\begin{description}
\item[{\sc Type $\G$}] To a point $P=\s(x_P)\in\G_\e$ we associate the chart $\s_P=\s\circ \H_P$ where 
$$\H_P:B_1\to B\Big(x_P; \tfrac{|x_P|}{2}\Big),\ \ \ \ \H_P(x)=x_P+\tfrac{|x_P|}{2}\cdot x.$$
This chart is well defined since 
$$C\e<2C\e<\tfrac{|x_P|}{2}<\big|\H_P(x)\big|< \tfrac{3}{2}|x_P|<\tfrac{3}{8}C^{-1}<C^{-1} \mathrm{\ \ \ for\ all\ \ \ }x\in B_1.$$
It is important to note that   
$w_\e\big(\s_P(x)\big)=|\H_P(x)|$ for all $x\in B_1$.
To a chart $\s_P$ we associate the scaled pullback 
\begin{equation}\label{defn-gP}
g_P:=\tfrac{4}{|x_P|^2}\s_{P}^{\ *}g_\e.
\end{equation}
Finally, we also consider the restriction $\s_P'$ of $\s_P$ to  $B_{\frac{1}{2}}$.
\medbreak
\item[{\sc Type $M$}]  Consider  finitely many charts $\sM_1, ..., \sM_N$ which cover $M\smallsetminus B_{(4C)^{-1}}^M$ whose domains are $B_1$  and whose images are contained in 
$M\smallsetminus B_{(8C)^{-1}}^M$. Moreover, assume that their restrictions to $B_\frac{1}{4}$ cover $M\smallsetminus B_{(4C)^{-1}}^M$.  By composing $\sM_1, ..., \sM_N$ with $\pM$ we obtain charts
$$\s_n:=\pM\circ\sM_n, \ \ \ 1\le n\le N$$
which 
cover the component of $M_\e\smallsetminus\G_\e$ arising from $M$. 
To $\s_n$ we associate the pullback  $$g_n:=\s_n^{\ *}g_\e=(\sM_n)^{*}g.$$ 
The restrictions of $\s_n$ to $B_\frac{1}{2}$ will be denoted by $\s_n'.$ 
\medbreak
\item[{\sc Type $M_0$}] 
Consider finitely many\footnote{Note that, without loss of generality, we may assume that the number $N$ of charts of {\sc Type $M$} is the same as the number of charts of {\sc Type $M_0$}.} charts $\so_1, ... ,\so_N$ covering $M_0\smallsetminus \Oo_{4C}=M_0\smallsetminus  \Oe _{4C\e}$ whose domains are $B_1$, whose images are contained in $M_0\smallsetminus \Oo _{8C}$, and whose restrictions to balls $B_\frac{1}{4}$ cover $M_0\smallsetminus  \Oo _{4C}$. By composing with $\pe$ we obtain 
$$\s_{-n}:=\pe\circ\so_n,\ \ \ 1\le n\le N,$$
charts which cover the component of $M_\e\smallsetminus\G_\e$ arising from $M_0$. To a chart $\s_{-n}$ we associate the scaled pullback 
$$g_{-n}:=\tfrac{1}{\e^2}\  \s_{-n}^{\ *}\ g_\e =(\so_n)^{*}g_0.$$
Note that $g_{-n}$ are independent of $\e$. 
The restrictions of $\s_{-n}$ to $B_\frac{1}{2}$ will be denoted by $\s_{-n}'$.
\end{description}
The collection of charts $\s_P$ for $P\in\G_\e$ and $\s_n,\ \s_{-n}$ for $1\le n\le N$ will be denoted by $\C_\e$.  The collection of their restrictions to $B_\frac{1}{2}$ will be denoted by $\C'_\e$.

We list some properties of $\C_\e$ and $\C_\e'$; the reader may find it useful to compare these with the scaling properties of \cite{CD}.
\begin{prop}\label{prop-covers}\hfill
\begin{enumerate}
\item We have $w_\e\big(\Phi(x)\big)\sim w_\e\big(\Phi(0)\big)$ independently of $\Phi\in\C_\e$. 
\item The metrics $g_P$ for $P\in\G_\e$ are uniformly close to $\delta$ in the Euclidean  $C^k$-norm.
\item The transition functions for the charts in $\C_\e$ and $\C_\e'$ have uniformly bounded Euclidean H\matho lder norms. In particular,
\begin{equation}\label{expan.factor}
\frac{|F(x)-F(y)|}{|x-y|}\sim 1
\end{equation}
independently of the transition function $F$.
\item We have
$\big|\dnabla^k(w_\e\circ\Phi)^\nu\big| \cle \big(w_\e\circ\Phi)^\nu$
independently of $\Phi\in\C_\e$.
\end{enumerate}
\end{prop}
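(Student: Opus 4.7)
The plan is case analysis on the three chart types in $\C_\e$. Charts of Types $M$ and $M_0$ are composites of a fixed finite collection of smooth charts on the compact sets $M\smallsetminus B^M_{(4C)^{-1}}$ and $M_0\smallsetminus\Oo_{4C}$ with the quotients $\pM$ and $\pe$; the associated metrics $g_n$ and $g_{-n}$ are $\e$-independent, and the restriction of $w_\e$ to the chart image is either a constant or a fixed smooth template function ($w_{M,\R}$ or $\e\,w_{0,\R}(|\cdot|/\e)$) of a bounded argument. All four items are then straightforward consequences of compactness. The real content concerns Type $\G$ charts, and the unifying observation there is the identity
\[
(g_P)_{ij}(x) \;=\; (\s^*g_\e)_{ij}\bigl(\H_P(x)\bigr),\qquad x\in B_1,
\]
which holds because the Jacobian factor $(|x_P|/2)^{2}$ produced by pulling back a $2$-tensor along $\H_P(x)=x_P+(|x_P|/2)x$ exactly cancels the prefactor $4/|x_P|^{2}$ in the definition of $g_P$.

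Items (1) and (4) for Type $\G$ follow at once. Since $\H_P(B_1)=B\bigl(x_P,|x_P|/2\bigr)$, we have $|\H_P(x)|\in[|x_P|/2,(3/2)|x_P|]$, so $w_\e(\s_P(x))=|\H_P(x)|\sim|x_P|=w_\e(\s_P(0))$, proving (1). For (4) one differentiates $(w_\e\circ\s_P)^\nu(x)=|\H_P(x)|^\nu$ directly: the chain rule yields $|\dnabla^k|\H_P(x)|^\nu|\cle|x_P|^k|\H_P(x)|^{\nu-k}\sim|\H_P(x)|^\nu$, again using $|x_P|\sim|\H_P(x)|$.

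Item (2) is the technical heart. The plan is to split
\[
\s^*g_\e-\delta \;=\; \chi\!\bigl(|\cdot|/\sqrt{\e}\bigr)\,(\se^*\e^2g_0-\delta) \;+\; \bigl(1-\chi\!\bigl(|\cdot|/\sqrt{\e}\bigr)\bigr)(\sM^*g-\delta)
\]
and bound each summand through the chain rule with $\H_P$. Outside the transition band $\{|y|\sim\sqrt{\e}\}$ only one of $\chi,1-\chi$ is active: on the $M_0$ side the AE estimates \eqref{asymptg} paired with the Jacobian power $(|x_P|/2)^{|\beta|}$ give $|\partial^\beta g_P|\cle\e/|x_P|\le(4C)^{-1}$, and on the $M$ side the expansion in \eqref{metricM} gives $|\partial^\beta g_P|\cle|x_P|^2\le(4C)^{-2}$. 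In the transition band --- which is only relevant when $|x_P|\sim\sqrt{\e}$ --- a Leibniz expansion distributes $\partial^\beta$ between $\chi$ (producing factors $\e^{-|\beta_1|/2}$) and the differences $\se^*\e^2g_0-\delta$, $\sM^*g-\delta$ (of sizes $O(\e/|y|^{|\beta_2|+1})$ and $O(|y|^{2-|\beta_2|})$ respectively), and multiplied by $(|x_P|/2)^{|\beta|}\sim\e^{|\beta|/2}$ the powers of $\e$ collapse by the exact arithmetic $|\beta|/2-|\beta_1|/2-|\beta_2|/2=0$ to $O(\sqrt{\e})$ per term (or better). Summing yields $\|g_P-\delta\|_{C^k(B_1)}\cle C^{-1}+\sqrt{\e}$, uniformly small when $C$ is large and $\e$ small. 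The sharp cancellation of $\e$-powers in the transition band is where I expect the most careful bookkeeping is needed.

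Finally, item (3) is a case check. Transitions within Type $M$ or Type $M_0$ charts live on a fixed compact set and are uniformly smooth. Within Type $\G$ they reduce to the affine maps $\H_P^{-1}\circ\H_Q(x)=(2/|x_P|)(x_Q-x_P)+(|x_Q|/|x_P|)x$; chart overlap combined with (1) forces $|x_P|\sim|x_Q|$, so the Jacobian $(|x_Q|/|x_P|)\,\mathrm{Id}$ is uniformly bounded and bounded below, giving uniform Hölder control and \eqref{expan.factor}. Mixed transitions between Type $\G$ and Type $M$ or Type $M_0$ charts occur only when $|x_P|\sim 1/C$, where they factor through a fixed-manifold transition and $\H_P$, whose linear part has norm $\sim 1$; uniform boundedness of the resulting Hölder norms follows.
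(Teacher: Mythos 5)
Your items (1), (2) and (4) are essentially the paper's own argument: the cancellation of the Jacobian of $\H_P$ against the prefactor $4/|x_P|^2$, the AE estimates \eqref{asymptg} combined with the normal-coordinate expansion \eqref{metricM}, and the observation that derivatives of the cut-off are only active when $|x_P|\sim\sqrt{\e}$ (where the factors $\big(\tfrac{|x_P|}{2\sqrt{\e}}\big)^{|\beta|}$ are bounded) are exactly the ingredients used there; your direct chain-rule computation of $\dnabla^k|\H_P(x)|^\nu$ is a harmless streamlining of the paper's induction for (4), and your Leibniz bookkeeping in the transition band gives the same conclusion as the paper's \eqref{unif-eucl} and \eqref{deriv-cut-off}. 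The genuine problem is in your part (3), in the mixed transitions between {\sc Type $\G$} and {\sc Type $M_0$} charts. You assert that such overlaps occur only when $|x_P|\sim 1/C$. This is false: $\mathrm{Im}(\s_{-n})\subseteq \pe\big(M_0\smallsetminus\Oo_{8C}\big)$, where $w_\e\le 8C\e$, while $w_\e>\tfrac{|x_P|}{2}$ on $\mathrm{Im}(\s_P)$, so overlap forces $4C\e<|x_P|\le 16C\e$, i.e. $|x_P|\sim C\e$. Consequently the linear part of $\H_P$ has norm $\sim\e$, not $\sim 1$, and the argument you give (``fixed-manifold transition composed with $\H_P$ of linear part $\sim1$'') fails as stated for this case; taken literally it would even suggest the transition degenerates as $\e\to0$.

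The missing observation --- the one the paper makes explicitly at this point --- is that $\s_{-n}=\pe\circ\so_n$ hides a dilation by $\e^{-1}$: on the overlap $\s=\pe\circ\se$ with $\se(x)=\so(x/\e)$, hence
$\s_{-n}^{-1}\circ\s_P=\big(\so_n^{-1}\circ\so\big)\circ\big(\e^{-1}\H_P\big)$,
and $\e^{-1}\H_P(x)=\e^{-1}x_P+\tfrac{|x_P|}{2\e}\,x$ has dilation coefficient $\tfrac{|x_P|}{2\e}\in(2C,8C]$, bounded above and below independently of $\e$. With this correction the $\G$--$M_0$ case goes through exactly like your $\G$--$M$ case (the transitions $\so_n^{-1}\circ\so$ are finitely many and $\e$-independent on the relevant compact set), and it also yields the two-sided bound \eqref{expan.factor} for these transitions. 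The rest of your part (3) --- the $\G$--$\G$ case via $|x_P|\sim|x_Q|$, the $\G$--$M$ case with $|x_P|\sim C^{-1}$, and the transitions within {\sc Type $M$} and within {\sc Type $M_0$} --- agrees with the paper.
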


\begin{proof} It is immediate that for all $x\in B_1$ and all charts $\Phi$ of {\sc Type  $\G_\e$} we have 
$$\tfrac{1}{2}w_\e\big(\Phi(0)\big)\le w_\e\big(\Phi(x)\big)\le \tfrac{3}{2} w_\e\big(\Phi(0)\big).$$ For the remaining charts the claim (1) follows from the fact that $w_\e\sim 1$ on $\pM\Big(M\smallsetminus B_{(8C)^{-1}}\Big)$ and  $w_\e\sim \e$ on $\pe\Big(M_0\smallsetminus  \Oe _{8C\e}\Big)$. 

To prove part (2) of our proposition 
let $P\in\G_\e$ and $x\in B_1$. It follows from the definition (\ref{defn-gP}) of $g_P$, the definition (\ref{defn-g}) of $g_\e$, the estimate (\ref{asymptg}), and expansion (\ref{metricM}) that the components of $g_P$ satisfy 
\begin{equation}\label{unif-eucl}
\begin{aligned}
\big| (g_P)_{ij} -\delta_{ij}\big|\ \Big|_x=&\big|(\sigma^*g_\e)_{ij}-\delta_{ij}\big|\ \Big|_{\H_P(x)}\\
\le&\big|(\se^*\ \e^2g_0)_{ij}-\delta_{ij}\big|\ \Big|_{\H_P(x)}+
			\big|(\sM^*\ g)_{ij}-\delta_{ij}\big|\ \Big|_{\H_P(x)}\cle \frac{\e}{\big|\H_P(x)\big|}+\big|\H_P(x)\big|^2.
\end{aligned}
\end{equation}
Since $\e\cle \big|\H_P(x)\big|\cle 1$ independently of $P\in \G_\e$ we see that $\big| (g_P)_{ij} -\delta_{ij}\big|$ is uniformly bounded from above. In fact, the same argument can be applied to the derivatives of $(g_P)_{ij}$.
The only complication here is the presence of the derivatives of the cut-off function $\chi\Big(\frac{|\H_P(x)|}{\sqrt{\e}}\Big).$ 
However, each derivative of this function is uniformly bounded on $B_1$. To see this, decompose
$$\chi\Big(\frac{|\H_P(x)|}{\sqrt{\e}}\Big)=\widehat{\chi}\circ \widehat{\H_P}(x),\ \mathrm{\ for\ }\ \widehat{\chi}(y)= \chi(|y|) \mathrm{\ \ and\ \ } \widehat{\H_P}(x)=\tfrac{1}{\sqrt{\e}}\cdot x_P+\tfrac{|x_P|}{2\sqrt{\e}}\cdot x.$$
All derivatives of $\widehat{\chi}$ are bounded on $\R^3$, and are supported in the annulus $1\le |y|\le 4$. Since 
\begin{equation}\label{deriv-cut-off}
\partial^\beta\big(\widehat{\chi}\circ \widehat{\H_P}\big)=\big(\tfrac{|x_P|}{2\sqrt{\e}}\big)^{|\beta|}\cdot\big[(\partial^\beta\widehat{\chi})\circ \widehat{\H_P}\big]
\end{equation}
for all multi-indices $\beta$, we see that $\partial^\beta\big(\widehat{\chi}\circ \widehat{\H_P}\big)$ is supported on the set $\{x\in B_1\ \big|\ 1\le \widehat{\H_P}(x)\le 4\}$. 
Moreover, $\tfrac{|x_P|}{2\sqrt{\e}}\le \widehat{\H_P}(x)\le \tfrac{3|x_P|}{2\sqrt{\e}}$ for all $x\in B_1$ and so 
$\partial^\beta\big(\widehat{\chi}\circ \widehat{\H_P}\big)\neq 0$ only if 
$$\tfrac{1}{3}\le \tfrac{|x_P|}{2\sqrt{\e}}\le 4.$$
It now follows from (\ref{deriv-cut-off}) that $\partial^{\beta}_x\Big[\chi\big(\tfrac{|\H_P(x)|}{\sqrt{\e}}\big)\Big]$ for each individual multi-index $\beta$ is bounded independently of $\e$, $P$ and $x$.  Arguing as in (\ref{unif-eucl}) completes  the proof of part (2) of our proposition.

To prove part (3) it suffices to consider the transition functions between two charts one of which  is of {\sc Type $\G$}. A transition function $\s_{P}^{-1}\circ\s_{Q}$ for $P, Q\in\G_\e$ is a composition of a translation and a dilation with coefficient $\frac{w_\e(Q)}{w_\e(P)}$. In order for $\mathrm{Im}(\s_P) \cap \mathrm{Im} (\s_Q)\neq \emptyset$ we need to have  $\frac{w_\e(Q)}{w_\e(P)}\in[\frac{1}{3}, 3]$. Therefore, all the derivatives of the transition functions of the form $\s_{P}^{-1}\circ \s_{Q}$ are bounded from above by $3$. Inspecting the condition $\mathrm{Im}(\s_n)\cap \mathrm{Im}(\s_P)\neq \emptyset$ we see that the dilation coefficient $\Big(\frac{w_\e(P)}{2}\Big)^{\pm 1}$ of $\H_{P}^{\pm 1}$ is bounded uniformly in $\e$. Since there are only finitely many  transition functions  $\s_{n}^{-1}\circ \s$ and $\s^{-1}\circ\s_n$ we see that $\s_{n}^{-1}\circ \s_P$ and $\s_{P}^{-1}\circ \s_{n}$ have all their derivatives bounded uniformly in $\e$. Similar considerations prove the boundedness in the last remaining cases: those of $\s_{-n}^{-1}\circ \s_P$ and $\s_{P}^{-1}\circ \s_{-n}$. Roughly speaking, here we have $w_\e(P)=O(\e)$ while $\s=\pe\circ\se$ features a scaling by $\e$ such that  the resulting net contribution of $\s_P$ and $\s_{P}^{-1}$ in terms of $\e$ is simply $O(1)$.  The explicit details are left to the reader. 

Part (4) of the proposition is immediate for the charts of {\sc Type $M$} due to their finite number. Since $w_\e\circ\pe(X)=\e\,w_{0,\R}\left(\left|\so^{-1}(X)\right|\right)$ for all  $X\in\Oo_C$, the same is true for charts of {\sc Type $M_0$}. Thus, it remains to understand the charts of {\sc Type $\G$}. 

In general, we have an estimate of the form
\begin{equation*}
\Big|\dnabla^k\Big((w_\e\circ\Phi)^\nu\Big)\Big|\le \sum_{j=1}^k\left[c_j(w_\e\circ\Phi)^\nu\sum\Big|\frac{\dnabla^{i_1}(w_\e\circ\Phi)}{w_\e\circ\Phi}\Big|\otimes...
\otimes\Big|\frac{\dnabla^{i_j}(w_\e\circ\Phi)}{w_\e\circ\Phi}\Big|\right].
\end{equation*}
Therefore, it suffices to prove that  
\begin{equation}\label{weight<=}
\Big|\dnabla^k \big(w_\e\circ \Phi\big)\Big| \cle (w_\e\circ \Phi)
\end{equation}
independently of $\Phi=\s_P$, $P=\s(x_P)\in\G_\e$.
For $k=1$ one can directly compute:
\begin{equation*}
\Big|\frac{d\big(w_\e\circ\Phi\big)}{w_\e\circ \Phi}(x)\Big|  %=\Big|\frac{d\big((w_\e\circ\Phi)^2\big)}{2(w_\e\circ \Phi)^2} (x)\Big| %=\Big|\frac{d\big(\sum_{a=1}^3 (x_P^a+\frac{|x_P|}{2}x^a)^2\big)}{2|x_P+\frac{|x_P|}{2}x|^2}\Big| 
=\left|\frac{\sum_{a=1}^3(x_P^a+\frac{|x_P|}{2}x^a)\cdot \frac{|x_P|}{2}}{|x_P+\frac{|x_P|}{2}x|^2}dx^a\right| \le 
\frac{\frac{|x_P|}{2}}{|x_P+\frac{|x_P|}{2}x|}\cdot \sum_{a=1}^3\frac{|x_P^a+\frac{|x_P|}{2}x^a|}{|x_P+\frac{|x_P|}{2}x|}\le 3.
\end{equation*}
The remainder of the proof of \eqref{weight<=} is easily done by induction. To avoid notational complications we only do the case of 
$k=2$ and leave the general induction step to the reader. 
We have
\begin{equation*}
\dnabla^2\big((w_\e\circ \Phi)^2\big)=2d\big(w_\e\circ \Phi)\otimes d\big(w_\e\circ\Phi)+2(w_\e\circ\Phi)\dnabla^2(w_\e\circ\Phi),
\end{equation*}
while 
$\dnabla^2\big((w_\e\circ\Phi)^2\big)(x)=\dnabla^2\big(\sum_{a=1}^3 (x_P^a+\frac{|x_P|}{2}x^a)^2\big)$, viewed as a matrix, is a product of $\big(\frac{|x_P|}{2}\big)^2$ and the identity. Dividing  by $(w_\e\circ \Phi)^2$, using the boundedness of $\frac{\big(\frac{|x_P|}{2}\big)^2}{(w_\e\circ\Phi)^2}$ and the already established estimate for $\Big|\frac{d(w_\e\circ\Phi)}{w_\e\circ\Phi}\Big|$ we obtain 
\begin{equation*}
\Big|\frac{\dnabla^2(w_\e\circ\Phi)}{w_\e\circ\Phi}\Big|\le \Big|\frac{\dnabla^2\big((w_\e\circ\Phi)^2\big)}{(w_\e\circ\Phi)^2}\Big|+\Big|\frac{d\big(w_\e\circ\Phi\big)}{w_\e\circ\Phi}\Big|^2\cle 1.\qedhere
\end{equation*} 
%The proof of (\ref{weight<=}) is now easily be completed by induction.  Note that $\dnabla^k\big((w_\e\circ\Phi)^2\big)=0$ for $k\ge 3$, while in general 
%$$\dnabla^k\big((w_\e\circ\Phi)^2\big)=2(w_\e\circ\Phi)\dnabla^k(w_\e\circ\Phi)+\sum_{j=1}^{k-1}c_j\cdot \dnabla^j(w_\e\circ\Phi)\otimes\dnabla^{k-j}(w_\e\circ\Phi)$$
%for some uniform constants $c_j$. 
%Dividing by $(w_\e\circ\Phi)^2$ and using the inductive hypothesis yields
%$$\Big|\frac{\dnabla^k(w_\e\circ\Phi)}{w_\e\circ\Phi}\Big|\le \frac{1}{2}\sum_{j=1}^{k-1}|c_j|\Big|\frac{\dnabla^j(w_\e\circ\Phi)}{w_\e\circ\Phi}\Big|\cdot\Big|\frac{\dnabla^{k-j}(w_\e\circ\Phi)}{w_\e\circ\Phi}\Big|\cle 1.$$

%
%The case of general $\nu$ follows from (\ref{weight<=}) as an easy consequence. Indeed, there exist constants $c_j$ independent of $\e$ and $\Phi$ such that 
%$$\dnabla^k\Big((w_\e\circ\Phi)^\nu\Big)=\sum_{j=1}^k \Big[c_j\cdot(w_\e\circ\Phi)^{\nu-j}\sum \dnabla^{i_1}(w_\e\circ\Phi)\otimes...\otimes\dnabla^{i_j}(w_\e\circ\Phi)\Big],$$
%where the second summation goes over all $i_1\ge i_2\ge...\ge i_j\ge 1$ for which $i_1+i_2+...+i_j=k$; consequently,
%\begin{equation*}
%\Big|\dnabla^k\Big((w_\e\circ\Phi)^\nu\Big)\Big|\le \sum_{j=1}^k\left[|c_j|(w_\e\circ\Phi)^\nu\sum\Big|\frac{\dnabla^{i_1}(w_\e\circ\Phi)}{w_\e\circ\Phi}\Big|\otimes...
%\otimes\Big|\frac{\dnabla^{i_j}(w_\e\circ\Phi)}{w_\e\circ\Phi}\Big|\right]
%\cle (w_\e\circ\Phi)^\nu. \qedhere
%\end{equation*}
\end{proof}

\subsection{H$\ddot{\mathbf{o}}$lder norms on $M_\e$} We start by  defining the norms. The reader may find it instructive to compare our approach to that of \cite{lee}.
\begin{defn}\label{holder-def}
If $T$ is a tensor field with locally $C^{k,\alpha}$ components, 
then set $$\|T\|_{k,\alpha}:=\sup_{\Phi\in\C_\e}\|\Phi^*T\|_{C^{k,\alpha}(B_1,\delta)} \ \ \ \text{and}\ \ \ 
\|T\|_{k,\alpha}':=\sup_{\Phi\in\C_\e'}\|\Phi^*T\|_{C^{k,\alpha}(B_\frac{1}{2},\delta)}.$$
\end{defn}

The norms defined above are referred to as ``H\o lder norms" or ``$C^{k,\alpha}$-norms". To develop an intuition about them we work out an equivalent form of the $C^{k,0}$-norm. 

We start by analyzing $\|(\s_P)^*T\|_{C^{k,0}}$ for a chart $\s_P$ of {\sc Type $\G$} and a tensor $T=T^{i_1i_2...i_p}_{j_1j_2...j_q}$ on $M_\e$. We have
\begin{equation*}
\big|\gpnabla^k (\s_{P}^* T)\big|_{g_P}=\big|\gpnabla^k (\s_{P}^* T)\big|_{\frac{4}{w_\e(P)^2} \s_{P}^*g_\e}=\Big(\frac{w_\e(P)}{2}\Big)^{-p+q+k}\cdot \Big(\Big|\genabla^k T\Big|_{g_\e}\circ \s_P\Big).
\end{equation*}
Proposition \ref{prop-covers} implies that $w_\e\circ \s_P\sim w_\e(P)$ independently of $P$ and that 
for each $k, p, q$ there exists $C_{p,q,k}>0$ such that 
\begin{equation*}
\begin{aligned}
w_{\e}^{-p+q+k}\big|\genabla^k T\big|_{g_\e}\circ \s_{P} &\le C_{p,q,k}\Big(\big|\dnabla^k \s_{P}^*T\big| +...+\big|\dnabla \s_{P}^*T\big|  +\big|\s_{P}^*T\big| \Big)\\
\big|\dnabla^k \s_{P}^*T\big| &\le C_{p,q,k}\Big(w_{\e}^{-p+q+k}\big|\genabla^k T\big|_{g_P}+...+w_{\e}^{-p+q+1}\big|\genabla  T\big|_{g_\e}+w_{\e}^{-p+q}\big|T\big|_{g_\e}\Big)\circ \s_P,
\end{aligned}
\end{equation*}
point-wise.  Analogues of these estimates also hold for charts of {\sc Type $M$} and {\sc Type $M_0$}. Indeed, one may apply the reasoning outlined above to metrics $g_{\pm n}$ and   take advantage of the fact that $w_\e\circ \s_n\sim 1$, while $w_\e\circ \s_{-n}\sim \e$. It follows that the $\|\cdot\|_{k,0}$-norm on the space of tensors $T=T^{i_1...i_p}_{j_1...j_q}$ is equivalent to the norm 
$$\sum_{j=0}^k \sup_{M_\e}\big[w_\e^{-p+q+j} \big|\genabla^jT\big|_{g_\e}\big],$$
with equivalence constants depending on $p, q, k$ but \emph{not} on $\e$. As the same argument applies to the $\|\cdot\|'_{k,0}$-norm we have the uniform equivalence 
$\|\cdot\|_{k,0}\sim \|\cdot\|'_{k,0}$. 
In fact, more is true:
\begin{prop} \label{holder-equiv}
The norms $\|\cdot\|_{k,\alpha}$ and $\|\cdot\|'_{k,\alpha}$ are equivalent uniformly in $\e$. 
\end{prop}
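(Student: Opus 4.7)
The inequality $\|T\|'_{k,\alpha}\le \|T\|_{k,\alpha}$ is immediate: each $\Phi'\in \C'_\e$ is the restriction to $B_{\frac{1}{2}}$ of some $\Phi\in \C_\e$, and the H\"older norm is monotone under shrinking the domain. The content of the proposition is the reverse inequality $\|T\|_{k,\alpha}\cle \|T\|'_{k,\alpha}$, for which the plan is a standard finite-covering and transition-function argument. The whole point of the uniform estimates collected in Proposition \ref{prop-covers} is to let this argument run with constants independent of $\e$.

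First I would verify that the images $\Phi(B_{\frac{1}{4}})$, for $\Phi$ ranging over $\C_\e$, already cover $M_\e$. For {\sc Type $M$} and {\sc Type $M_0$} charts this is built into their construction. For {\sc Type $\G$} charts observe that $\sigma_P(0)=P$ and $0\in B_{\frac{1}{4}}$, so every point of $\G_\e$ is the image of $0$ under some chart; points of $M_\e\smallsetminus \G_\e$ are already covered by a chart of {\sc Type $M$} or {\sc Type $M_0$}. Now fix $\Phi\in \C_\e$ and $y\in B_1$, and pick $\Psi\in \C_\e$ with $\Phi(y)\in \Psi(B_{\frac{1}{4}})$, so the transition function $F:=\Psi^{-1}\circ\Phi$ is defined near $y$ and satisfies $F(y)\in B_{\frac{1}{4}}$. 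By Proposition \ref{prop-covers}(3) the derivatives of $F$ are bounded uniformly in $\e,\Phi,\Psi$ up to any finite order, and $|F(x)-F(y)|\sim |x-y|$. Consequently there exists $r\in (0,\frac{1}{2})$, independent of $\e$, $\Phi$ and $y$, such that $F$ is defined on $B_r(y)\cap B_1$ and maps into $B_{\frac{1}{2}}$. On that set $\Phi^*T=F^*\Psi^*T$, and the chain rule together with H\"older-scaling under a uniformly biLipschitz change of variables give
\[
\|\Phi^*T\|_{C^{k,\alpha}(B_r(y)\cap B_1,\delta)}\cle \|\Psi^*T\|_{C^{k,\alpha}(B_{\frac{1}{2}},\delta)}\le \|T\|'_{k,\alpha},
\]
with a constant independent of $\e$, $\Phi$ and $y$.

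Finally, since the radius $r$ is uniform, the compact ball $B_1$ is covered by a bounded, $\e$-independent number of balls $B_{r/2}(y_i)$, and one patches the local estimates into a global one. The only point of friction is the non-local nature of the H\"older seminorm for $\alpha>0$: for $x,z$ lying in different balls of the cover one has $|x-z|>r/2$, and the difference quotient $|\partial^\ell(\Phi^*T)(x)-\partial^\ell(\Phi^*T)(z)|/|x-z|^\alpha$ is then crudely controlled by $(r/2)^{-\alpha}$ times twice the already-bounded $C^k$ sup norm of $\Phi^*T$. This yields $\|\Phi^*T\|_{C^{k,\alpha}(B_1,\delta)}\cle \|T\|'_{k,\alpha}$ uniformly in $\Phi\in\C_\e$, which is the desired inequality. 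The main conceptual obstacle is not any single estimate but the bookkeeping required to verify that the uniformity given by Proposition \ref{prop-covers}(3) (which mixes genuinely different regimes of scale in {\sc Type $\G$}, {\sc Type $M$}, and {\sc Type $M_0$} charts) propagates through the covering argument with constants that do not degenerate as $\e\to 0$.
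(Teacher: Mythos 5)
Your proposal is correct and follows essentially the same route as the paper's proof: the trivial inequality $\|T\|'_{k,\alpha}\le\|T\|_{k,\alpha}$, the observation that the $B_{\frac{1}{4}}$-restrictions of the charts in $\C_\e$ cover $M_\e$, and the uniform transition-function bounds of Proposition \ref{prop-covers}(3) fed through the chain rule to pass from a chart $\Phi\in\C_\e$ to a nearby chart restricted to $B_{\frac{1}{2}}$. The only cosmetic difference is the bookkeeping for the H\"older quotient at widely separated points: the paper chains $8m$ intermediate points along the segment from $x$ to $y$ and applies the triangle inequality (picking up a factor $(8m)^{1-\alpha}$), whereas you use a uniform-radius cover of $B_1$ and the crude $(r/2)^{-\alpha}$ bound by twice the sup-norm for far pairs; both devices yield constants independent of $\e$, $\Phi$ and $T$.
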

\begin{proof}
Since $\|T\|'_{k,\alpha}\le \|T\|_{k,\alpha}$ it remains to show that for all $\Phi\in\C_\e$ and all tensors $T$ of a given type we can bound
$$\frac{\ \Big|\dnabla^k \Phi^*T(x)-\dnabla^k \Phi^*T(y)\Big|  \ }{|x-y|  ^\alpha}$$
from above by a uniform multiple of $\|\tilde{\Phi}^*T\|_{C^{k,\alpha}(B_{\frac{1}{2}}, \delta)}$ for some $\tilde\Phi\in\C_\e'$; the word ``uniform" here should be interpreted to mean ``independent of $\e$, $\Phi$, $T$, $x,y$, and $\tilde{\Phi}$". To this end, let $m$ 
be a uniform upper bound implied by (\ref{expan.factor}). 
For a given $\Phi\in\C_\e$, tensor $T$ and $x,y\in B_1$ let
$$x=x_0, x_1, x_2, ... , x_{8m-1}, x_{8m}=y$$ be the division points of the line segment $xy$ into $8m$ congruent subsegments. Note that $|x-y|  <2$ implies $|x_a-x_{a+1}|  <\frac{1}{4m}$ for all $0\le a\le 8m-1$. Since by assumption the restrictions of elements of $\C_\e$ to $B_{\frac{1}{4}}$ cover $M_\e$ we know that for each $a$ there is $\Phi_a$ such that 
$\Phi(x_a)\in \Phi_a\Big(B_\frac{1}{4}\Big)$.
Observe that 
$$\Phi(x_a), \Phi(x_{a+1})\in\Phi_a\Big(B_\frac{1}{2}\Big), \ 0\le a\le 8m-1.$$  %Indeed, if this were not the case there would exist a point $x^*$ of the segment $x_a x_{a+1}$ such that $\big|\Phi_a^{-1}\circ \Phi (x_a) - \Phi_a^{-1}\circ \Phi(x^*)\big|  >\frac{1}{4}$ which then by definition of $m$ would imply $|x^*-x_a|  >\frac{1}{4m}$. 
Applying the triangle inequality we see that 
%$$\Big|\dnabla^k\Phi^*T(x)-\dnabla^k\Phi^*T(y)\Big|  \le \sum_{a=0}^{8m-1}\Big|\dnabla^k\Phi^*T(x_a)-\dnabla^k\Phi^*T(x_{a+1})\Big|  $$
%$$\Big|\dnabla^k\Phi^*T(x)-\dnabla^k\Phi^*T(y)\Big|  \le 8m\Big|\dnabla^k\Phi^*T(x_{\bar{a}})-\dnabla^k\Phi^*T(x_{\bar{a}+1})\Big|  $$ %and consequently 
$$
\frac{\left|\dnabla^k\Phi^*T(x)-\dnabla^k\Phi^*T(y)\right|  }{|x-y|^\alpha }
%\le & 8m\frac{\left|\dnabla^k\Phi^*T(x_{\bar{a}})-\dnabla^k\Phi^*T(x_{\bar{a}+1})\right|  }{|x-y|^\alpha}\\
\le
(8m)^{1-\alpha}\frac{\left|\dnabla^k\Phi^*T(x_{\bar{a}})-\dnabla^k\Phi^*T(x_{\bar{a}+1})\right|  }{|x_{\bar{a}}-x_{\bar{a}+1}|^\alpha}.
$$
for some $0\le \bar{a}\le 8m-1$. In principle, the components of $\dnabla^k\Phi^*T=\dnabla^k\Big(\big(\Phi_{\bar{a}}^{-1}\circ \Phi\big)^*\Phi_{\bar a}^*T\Big)$ at some point $\xi$ can be written as sums of products of the components of the covariant derivatives of $\Phi_{\bar{a}}^*T$ at $\Phi_{\bar{a}}^{-1}\circ \Phi(\xi)$, the derivatives of $\big(\Phi_{\bar{a}}^{-1}\circ \Phi\big)^{-1}$ at $\Phi_{\bar{a}}^{-1}\circ \Phi(\xi)$ and the derivatives of $\Phi_{\bar{a}}^{-1}\circ \Phi$ at $\xi$. The uniform bounds on the derivatives of transition functions and expressions discussed in (\ref{expan.factor}) allow us to control the $C^{0,\alpha}(B_\frac{1}{2},\delta)$-norms of $\partial^\beta\Big(\Phi^{-1}\circ \Phi_{\bar{a}}\Big)\circ \big(\Phi_{\bar{a}}^{-1}\circ \Phi\big)$ and $\partial^\beta\Big(\Phi_{\bar{a}}^{-1}\circ\Phi\Big)$, which then leaves us with 
$$\frac{\Big|\dnabla^k\Phi^*T(x_{\bar{a}})-\dnabla^k\Phi^*T(x_{\bar{a}+1})\Big|  }{|x_{\bar{a}}-x_{\bar{a}+1}|^\alpha  }\cle \|\Phi_{\bar{a}}^*T\|_{C^{k,\alpha}(B_\frac{1}{2},\delta)}$$
independently of $\bar a$, $\Phi$ and $T$. This completes our proof. 
\end{proof}

There is an important feature of the previous proof: it shows that the atlases $\C_\e$ and $\C_\e'$  in Definition \ref{holder-def} can be replaced  by any  \emph{finite} sub-atlases whose charts after restriction to $B_{\frac{1}{4}}$ still cover $M_\e$. Indeed, the only effect such a change of atlases has is that $\|\cdot\|_{k,\alpha}$ (and $\|\cdot\|'_{k,\alpha}$) are being replaced by equivalent norms. Consequently, compactness of $M_\e$ implies that any tensor field $T$ on $M_\e$ with locally $C^{k,\alpha}$ components satisfies 
$$\|T\|_{k,\alpha}<\infty \text{\ \ \ \big(and\ equivalently,\ \ }
\|T\|_{k,\alpha}'<\infty\big).$$

A natural question to investigate at this point is whether the space of all tensors $T^{i_1...i_p}_{j_1...j_q}$ on $M_\e$ with locally $C^{k,\alpha}$ components is complete with respect to the H\o lder norm. A purely  formal exercise which solely uses Definition \ref{holder-def} and completeness of $C^{k,\alpha}(B_1)$ gives a positive answer to the question. 

\begin{defn} 
The H\matho lder norm $\|\cdot\|_{k,\alpha}$ gives the set of all tensors $T^{i_1...i_p}_{j_1...j_q}$ on $M_\e$ with locally $C^{k,\alpha}$ components the structure of a Banach space,
which we denote by $C^{k,\alpha}_{p,q}(M_\e)$; this space is also referred to as a H\matho lder space. For simplicity we often write $C^{k,\alpha}(M_\e)$ in place of  $C^{k,\alpha}_{p,q}(M_\e)$.
\end{defn}

\subsection{Weighted H$\ddot{\mathbf{o}}$lder spaces on $M_\e$} 
\begin{defn} 
The Banach space of all tensors $T^{i_1 ... i_p}_{j_1... j_q}$ with locally $C^{k,\alpha}$ components
and norm(s)
$$\|T\|_{k,\alpha,\nu}:=\|w_\e^\nu T\|_{k,\alpha} \mathit{\ \ \ (and/or\ }
\|T\|_{k,\alpha,\nu}':=\|w_\e^\nu T\|_{k,\alpha}'\ )$$
is denoted by $C^{k,\alpha, \nu}_{p,q}(M_\e)$. The norm $\|\cdot\|_{k,\alpha,\nu}$ is referred to as a weighted H\matho lder norm, and $C^{k,\alpha,\nu}_{p,q}(M_\e)$ is referred to as a weighted H\matho lder space. 
\end{defn}
We point out that the two weighted H\o lder norms ($\|\cdot\|$ and $\|\cdot\|'$) are equivalent \emph{uniformly in} $\e$ (Proposition \ref{holder-equiv}). We proceed by discussing some equivalent representations of our weighted H\o lder norm.

For $\Phi\in\C_\e$ (or $\C_\e'$) set $\wef:=w_\e\big(\Phi(0)\big)$; the reader should think of $\wef$ as ``a sample value" of the weight function $w_\e$ on the chart $\Phi$. 
\begin{prop}\label{main.form}
The weighted H\matho lder norms $\|\cdot\|_{k,\alpha,\nu}$ and $\|\cdot\|'_{k,\alpha,\nu}$ are equivalent to the norms
$$T\mapsto \sup_{\Phi\in\C_\e}\wef^\nu\|\Phi^*T\|_{C^{k,\alpha}(B_1,\delta)} \mathit {\ \ \ and\ \ \ }
T\mapsto \sup_{\Phi\in\C_\e'}\wef^\nu\|\Phi^*T\|_{C^{k,\alpha}(B_{\frac{1}{2}},\delta)}.
$$
This equivalence is uniform in $\e$.
\end{prop}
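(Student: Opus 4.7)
The plan is to unfold both norms chart-by-chart and reduce the comparison to a pointwise estimate on the scalar weight factor $(w_\e\circ\Phi)^\nu$. By definition, $\Phi^{*}(w_\e^\nu T)=(w_\e\circ\Phi)^\nu\,\Phi^{*}T$, so I must show
$$\bigl\|(w_\e\circ\Phi)^\nu\,\Phi^{*}T\bigr\|_{C^{k,\alpha}(B_1,\delta)}\sim \wef^\nu\,\bigl\|\Phi^{*}T\bigr\|_{C^{k,\alpha}(B_1,\delta)}$$
uniformly in $\Phi\in\C_\e$ and in $\e$ (and analogously for $\C_\e'$, $B_{1/2}$).

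First I would prove the $\cle$ direction. Proposition \ref{prop-covers}(1) gives $(w_\e\circ\Phi)(x)\sim \wef$ uniformly in $x\in B_1$ and $\Phi$, so that $(w_\e\circ\Phi)^\nu\sim \wef^\nu$ uniformly (regardless of the sign of $\nu$). For the $C^k$ part I expand with the Leibniz rule
$$\dnabla^{j}\bigl((w_\e\circ\Phi)^\nu \Phi^{*}T\bigr)=\sum_{i=0}^{j}\binom{j}{i}\,\dnabla^{i}(w_\e\circ\Phi)^\nu \otimes \dnabla^{j-i}\Phi^{*}T,$$
and bound each $\dnabla^{i}(w_\e\circ\Phi)^\nu$ by a constant multiple of $(w_\e\circ\Phi)^\nu$ using Proposition \ref{prop-covers}(4), which by the previous sentence is in turn $\cle \wef^\nu$. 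Summing over $j\le k$ yields
$\|(w_\e\circ\Phi)^\nu\Phi^{*}T\|_{C^{k}(B_1,\delta)}\cle \wef^\nu\|\Phi^{*}T\|_{C^{k}(B_1,\delta)}$.

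For the Hölder seminorm I split
$$\dnabla^{k}\!\bigl((w_\e\circ\Phi)^\nu\Phi^{*}T\bigr)(x)-\dnabla^{k}\!\bigl((w_\e\circ\Phi)^\nu\Phi^{*}T\bigr)(y)$$
into a finite sum of differences of products, and in each product I add and subtract a common middle factor so that only one factor carries the increment. The factors on the weight side are bounded by $\wef^\nu$ (as above) while their $\alpha$-increments are controlled by the mean value theorem applied to $\dnabla^{i}(w_\e\circ\Phi)^\nu$, whose $\dnabla$-derivative is again $\cle \wef^\nu$ by Proposition \ref{prop-covers}(4). The factors on the $\Phi^{*}T$ side are absorbed into $\|\Phi^{*}T\|_{C^{k,\alpha}(B_1,\delta)}$. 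Collecting these estimates gives the desired $\cle$ bound.

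The opposite inequality follows by the same argument applied to $(w_\e\circ\Phi)^{-\nu}$ in place of $(w_\e\circ\Phi)^{\nu}$, since Proposition \ref{prop-covers}(1) and (4) hold for any real exponent. Then
$$\Phi^{*}T=(w_\e\circ\Phi)^{-\nu}\bigl[(w_\e\circ\Phi)^\nu\Phi^{*}T\bigr],$$
and the previous estimate (with $-\nu$) yields $\|\Phi^{*}T\|_{C^{k,\alpha}}\cle \wef^{-\nu}\|(w_\e\circ\Phi)^\nu\Phi^{*}T\|_{C^{k,\alpha}}$, i.e.\ the reverse direction. Taking suprema over $\Phi\in\C_\e$ (resp.\ $\C_\e'$) gives the two equivalences. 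The primary place where care is needed is the Hölder-seminorm bookkeeping in the telescoping decomposition above; every other step is a direct invocation of Proposition \ref{prop-covers}.
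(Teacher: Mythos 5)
Your proposal is correct and follows essentially the same route as the paper: the key inequality $\|\Phi^*(w_\e^\nu T)\|_{C^{k,\alpha}(B_1,\delta)}\cle \wef^\nu\|\Phi^*T\|_{C^{k,\alpha}(B_1,\delta)}$ is obtained via the Leibniz expansion together with Proposition \ref{prop-covers} (parts (1) and (4)), the H\"older seminorm is handled by the same add-and-subtract bookkeeping (the paper controls the weight's increments by its $C^{k+1}$-norm, which is your mean value theorem step), and the reverse bound comes from applying the same estimate with exponent $-\nu$. No gaps.
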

\begin{proof}
We focus on the norms arising from $B_1$; the norms arising from $B_\frac{1}{2}$ can be dealt with analogously. The main ingredient of our proof is showing the uniform estimate 
\begin{equation}\label{basic.wef<=}
\Big\|\Phi^*\big(w_\e^\nu T\big)\Big\|_{C^{k,\alpha}(B_1,\delta)}\cle
\wef^\nu\|\Phi^*T\|_{C^{k,\alpha}(B_1,\delta)},
\end{equation}
independently of $\Phi$. Since  
$\dnabla^j \Big(\Phi^*(w_\e^\nu T)\Big)
%=\dnabla^j\Big((w_\e\circ\Phi)^\nu\cdot \Phi^*T\Big)
=\sum_{i=0}^j\dnabla^i\Big((w_\e\circ\Phi)^\nu\Big)\otimes\dnabla^{j-i}\big(\Phi^*T\big)$ for all $0\le j\le k$, 
Proposition \ref{prop-covers} implies the point-wise estimate 
\begin{equation*}
\begin{aligned}
\Big|\dnabla^j\big(\Phi^*(w_\e^\nu T)\big)\Big|\cle &(w_\e\circ \Phi)^\nu\sum_{i=0}^j\Big|\dnabla^{j-i}\big(\Phi^*T\big)\Big|\\
\cle&(w_\e\circ\Phi)^\nu\|\Phi^*T\|_{C^{k,\alpha}(B_1,\delta)}\cle \wef^\nu \|\Phi^*T\|_{C^{k,\alpha}(B_1,\delta)}.
\end{aligned}
\end{equation*}
The same line of reasoning gives us 
\begin{gather*}
\frac{\Big|\dnabla^k\big(\Phi^*(w_\e^\nu T)\big)(x) -\dnabla^k\big(\Phi^*(w_\e^\nu T)\big) (y)\Big|}{|x-y|^\alpha}\phantom{nsgiushgrghephgphgphgiguughiwugegughuwegisga}\\
\begin{aligned}
%\le& 
%\sum_{i=0}^k \frac{\Big|\dnabla^i\big((w_\e\circ\Phi)^\nu\big)(x)\otimes\dnabla^{k-i}\big(\Phi^*T\big)(x) - \dnabla^i\big((w_\e\circ\Phi)^\nu\big)(y)\otimes\dnabla^{k-i}\big(\Phi^*T\big)(y)\Big|}{|x-y|^\alpha}\\
\le &\sum_{j=0}^k\frac{\Big|\dnabla^j\big((w_\e\circ\Phi)^\nu\big)(x)-\dnabla^j\big((w_\e\circ\Phi)^\nu\big)(y)\Big|}{|x-y|^\alpha}\cdot\big|\dnabla^{k-j}\big(\Phi^*T\big)\big|(x)\\
&\ +\sum_{j=0}^k\big|\dnabla^{j}\big((w_\e\circ\Phi)^\nu\big)\big|(y)\cdot \frac{\Big|\dnabla^{k-j}\big(\Phi^*T\big)(x)-\dnabla^{k-j}\big(\Phi^*T\big)(y)\Big|}{|x-y|^\alpha}\\
\cle&\|(w_\e\circ\Phi)^\nu\|_{C^{k+1}(B_1,\delta)}\cdot \|\Phi^*T\|_{C^{k,\alpha}(B_1,\delta)}\cle \wef^\nu\|\Phi^*T\|_{C^{k,\alpha}(B_1,\delta)}
\end{aligned}
\end{gather*}
independently of $\Phi$. 
Estimate (\ref{basic.wef<=}) is now immediate. 

Using $-\nu$ in place of $\nu$ in (\ref{basic.wef<=}) we obtain 
$$\|\Phi^*T\|_{C^{k,\alpha}(B_1,\delta)}=\|\Phi^*\big(w_\e^{-\nu}(w_\e^\nu T)\big)\|_{C^{k,\alpha}(B_1,\delta)}\cle \wef^{-\nu}\|\Phi^*(w_\e^\nu T)\|_{C^{k,\alpha}(B_1,\delta)},$$
i.e. $\wef^\nu\|\Phi^*T\|_{C^{k,\alpha}(B_1,\delta)}\cle \|\Phi^*(w_\e^\nu T)\|_{C^{k,\alpha}(B_1,\delta)}$. Overall, we have
$$\wef^\nu\|\Phi^*T\|_{C^{k,\alpha}(B_1,\delta)}\cle \|\Phi^*(w_\e^\nu T)\|_{C^{k,\alpha}(B_1,\delta)}\cle \wef^\nu\|\Phi^*T\|_{C^{k,\alpha}(B_1,\delta)},$$
which yields the claimed equivalence of norms. 
\end{proof}
As a consequence of Proposition \ref{main.form} and the description of the (unweighted) $C^{k,0}$-norm we have that  
the $\|\cdot\|_{k,0,\nu}$-norm on $C^{k,0,\nu}_{p,q}(M_\e)$ is equivalent to the norm 
\begin{equation}\label{Holder:equiv}
\sum_{j=0}^k \sup_{M_\e}\big[w_\e^{-p+q+\nu+j} \big|\genabla ^j T\big|_{g_\e}\big],
\end{equation}
with equivalence constants possibly depending on $p, q, k, \nu$ but \emph{not} on $\e$. 

Proposition \ref{main.form} also implies that for any fixed $\e$ and  \emph{any} $\nu_1, \nu_2\in \R$ the 
H\o lder norms $\|\cdot\|_{k,\alpha,\nu_1}$ and $\|\cdot\|_{k,\alpha,\nu_2}$ are equivalent. However, it also shows that the norms are \emph{not} equivalent uniformly in $\e$. It is this non-uniformity that makes our weighted H\o lder spaces useful. 

\subsection{Restrictions of (weighted) H\"older norms to open subsets of $M_\e$}\label{Holder:restriction}

We now define function spaces $C^{k,\alpha,\nu}_{p,q}(U)$ for open subsets  $U\subseteq M_\e$. These restrictions of (weighted) H\"older spaces are avoidable, but we have decided to keep them in our work as an efficient book-keeping tool.  

\begin{defn}\label{Hrest:defn}
Let $U\subseteq M_\e$ be an open subset and let
$$\C_{\e;U}:=\left\{\Phi\big|_{\Phi^{-1}(U)}\ \Big|\ \Phi\in \C_\e\right\}.$$
For tensor fields $T$ on $U$ with locally $C^{k,\alpha}$ components define
$$\|T\|_{k,\alpha;U}:=
\sup_{\Phi\in\C_{\e;U}}\|\Phi^*T\|_{C^{k,\alpha}\left(\Phi^{-1}(U),\delta\right)} \ \ \ \text{and}\ \ \ 
\|T\|_{k,\alpha,\nu;U}:=\|w_\e^\nu T\|_{k,\alpha;U}.$$
The Banach space of all tensors $T^{i_1 ... i_p}_{j_1... j_q}$ on $U$  for which 
$\|T\|_{k,\alpha,\nu;U}<\infty$ is denoted by $C^{k,\alpha, \nu}_{p,q}(U)$.
\end{defn}

It is important to notice that the proof of Proposition \ref{main.form} carries over to our new set-up and that we have the norm equivalences
\begin{gather}\label{main.restriction}
\sup_{\Phi\in \C_{\e;U}} \wef^\nu \|\Phi^*T\|_{C^{k,\alpha}\left(\Phi^{-1}(U),\delta\right)} \cle \|T\|_{k,\alpha,\nu;U} \cle \sup_{\Phi\in \C_{\e;U}} \wef^\nu \|\Phi^*T\|_{C^{k,\alpha}\left(\Phi^{-1}(U),\delta\right)},\\
\|T\|_{k,0,\nu}\sim \sum_{j=0}^k \sup_{U}\big[w_\e^{-p+q+j+\nu}\big|\genabla^j T|_{g_\e}\big],\label{normeq:restr}
\end{gather}
with equivalence constants which are \emph{independent} of $U\subseteq M_\e$.  A careful reader has surely noticed that we have not defined the norm $\|\cdot\|_{k,\alpha,\nu;U}'$. The reason for this is twofold: the norm $\|\cdot\|_{k,\alpha,\nu}'$ is only needed to obtain elliptic estimates on the entire $M_\e$ (e.g. Theorem \ref{unif-ell}), and, the proof of Proposition \ref{holder-equiv} does not carry over as it highly depends on the ``convexity" of $U$. Given this situation we have decided it is best to avoid $\|\cdot\|_{k,\alpha,\nu;U}'$-norm altogether. 

It is clear from Definition \ref{Hrest:defn} that 
$$U\subseteq V\subseteq M_\e\ \Longrightarrow\ \|T\|_{k,\alpha,\nu;U}\le \|T\|_{k,\alpha,\nu;V}$$
for all choices of $k,\alpha,\nu$ and $T$. Other properties of the restricted H\"older norms which we need are listed below; they follow directly from  \eqref{main.restriction} and their proofs are left to the reader.
\begin{prop}\label{Holder:props}
Let $U\subseteq M_\e$ be an open set.
\begin{enumerate}
\item If $\nu_1<\nu_2$ then 
$$(\inf_U w_\e)^{\nu_2-\nu_1}\|T\|_{k,\alpha,\nu_1;U}\cle \|T\|_{k,\alpha, \nu_2;U}\cle (\sup_U w_\e)^{\nu_2-\nu_1}\|T\|_{k,\alpha, \nu_1;U}$$
and in particular
$$\e^{\nu_2-\nu_1}\|T\|_{k,\alpha,\nu_1;U}\cle \|T\|_{k,\alpha, \nu_2;U}\cle \|T\|_{k,\alpha, \nu_1;U}$$
independently of $U\subseteq M_\e$ and $C^{k,\alpha}$-tensor fields $T$. 
\item The point-wise tensor product satisfies 
$$\|T_1\otimes T_2\|_{k,\alpha, \nu_1+\nu_2;U}\cle \|T_1\|_{k,\alpha,\nu_1:U}\|T_2\|_{k,\alpha, \nu_2;U}$$
independently of $U\subseteq M_\e$ and $C^{k,\alpha}$-tensor fields $T_1, T_2$ of a particular type.
\item Contraction of a tensor field gives rise to a continuous linear map 
$$\mathbf{C}:C^{k,\alpha, \nu}_{p+1,q+1}(U)\to C^{k,\alpha, \nu}_{p,q}(U)$$
whose norm is bounded uniformly in $\e$ and independently of $U\subseteq M_\e$.
\item Raising an index of a tensor field on $(U, g_\e)$ gives rise to a continuous linear map
$$^{\sharp}:C^{k,\alpha, \nu}_{p,q+1}(U)\to C^{k,\alpha, \nu+2}_{p+1, q}(U)$$ 
whose norm is bounded uniformly in $\e$ and independently of $U\subseteq M_\e$.
\end{enumerate}
\end{prop}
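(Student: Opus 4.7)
My plan is to work throughout with the chart-based equivalent expression from \eqref{main.restriction}, which reduces each claim to a routine Euclidean $C^{k,\alpha}(B_1)$ estimate on each $\Phi^{-1}(U)$ paired with an elementary manipulation of the scalar factor $\wef$. The key point is that the equivalence constants in \eqref{main.restriction} are uniform in $\e$ and independent of $U$, so the only substantive analytic input --- establishing that representation --- has already been done in Proposition \ref{main.form}; everything that follows is combinatorial bookkeeping.

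For (1), for every $\Phi\in \C_{\e;U}$ the sample value $\wef\in [\inf_U w_\e,\sup_U w_\e]$, and $\nu_2-\nu_1>0$ gives
\[(\inf_U w_\e)^{\nu_2-\nu_1}\,\wef^{\nu_1}\le \wef^{\nu_2}\le (\sup_U w_\e)^{\nu_2-\nu_1}\,\wef^{\nu_1}.\]
Multiplying by $\|\Phi^*T\|_{C^{k,\alpha}(\Phi^{-1}(U),\delta)}$, taking $\sup_\Phi$, and invoking \eqref{main.restriction} yields the two-sided inequality; the special case follows from $\inf_{M_\e}w_\e \sim \e$ and $\sup_{M_\e}w_\e\sim 1$, both immediate from the piecewise definition of $w_\e$ in Subsection \ref{weight-sec}. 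For (2), since Euclidean $C^{k,\alpha}(B_1)$ is a Banach algebra under pointwise (componentwise) product,
\[\|\Phi^*(T_1\otimes T_2)\|_{C^{k,\alpha}(\Phi^{-1}(U),\delta)}\cle \|\Phi^*T_1\|_{C^{k,\alpha}(\Phi^{-1}(U),\delta)}\,\|\Phi^*T_2\|_{C^{k,\alpha}(\Phi^{-1}(U),\delta)};\]
splitting $\wef^{\nu_1+\nu_2}=\wef^{\nu_1}\cdot \wef^{\nu_2}$ and taking $\sup_\Phi$ on each factor gives (2).

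For (3), in every chart $\Phi$ each component of $\Phi^*(\mathbf{C}T)$ is a finite sum of components of $\Phi^*T$, so $\|\Phi^*(\mathbf{C}T)\|_{C^{k,\alpha}(\Phi^{-1}(U),\delta)}\cle \|\Phi^*T\|_{C^{k,\alpha}(\Phi^{-1}(U),\delta)}$; because contraction preserves the quantity $-p+q$, the same factor $\wef^\nu$ appears on both sides after multiplication, and taking $\sup_\Phi$ completes (3). For (4), I would write $T^\sharp$ as a contraction of $g_\e^{-1}\otimes T$, so (2) and (3) reduce the claim to showing the uniform bound $\|g_\e^{-1}\|_{k,\alpha,2;M_\e}\cle 1$. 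I would verify this chart by chart: on a Type $\G$ chart, $\s_P^{\,*}g_\e^{-1}=\tfrac{4}{w_\e(P)^2}g_P^{-1}$, and the uniform $C^{k,\alpha}(B_1,\delta)$-closeness of $g_P$ to $\delta$ (Proposition \ref{prop-covers}(2)) gives $\|\s_P^{\,*}g_\e^{-1}\|_{C^{k,\alpha}}\cle w_\e(P)^{-2}=\wef^{-2}$, which is exactly cancelled by the factor $\wef^2$; on a Type $M$ chart, $\s_n^{\,*}g_\e^{-1}=(\sM_n^{\,*}g)^{-1}$ is $\e$-independent and $\wef^2\sim 1$; on a Type $M_0$ chart, $\s_{-n}^{\,*}g_\e^{-1}=\e^{-2}(\so_n^{\,*}g_0)^{-1}$ is cancelled by $\wef^2\sim \e^2$. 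Hence $\|g_\e^{-1}\|_{k,\alpha,2;M_\e}\cle 1$ uniformly in $\e$, which finishes (4). The main obstacle here would have been the uniformity control over an $\e$-dependent family of charts; it is precisely this that the apparatus of Proposition \ref{main.form} and Proposition \ref{prop-covers} already absorbs, leaving the present proposition as a straightforward consequence.
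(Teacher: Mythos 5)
Your proof is correct and follows exactly the route the paper intends: the paper gives no detailed argument, stating only that these properties ``follow directly from \eqref{main.restriction}'', and your chart-by-chart verification (including the uniform bound $\|g_\e^{-1}\|_{k,\alpha,2}\cle 1$ for part (4)) is precisely that argument carried out. One cosmetic remark: in part (1) the sample value $\wef=w_\e(\Phi(0))$ need not lie in $[\inf_U w_\e,\sup_U w_\e]$, since $\Phi(0)$ may fall outside $U$; but Proposition \ref{prop-covers}(1) gives $\wef\sim w_\e(\Phi(x))$ for any $x\in\Phi^{-1}(U)$, so the two-sided bound holds up to uniform constants, which is all that $\cle$ requires.
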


\subsection{Differential operators and uniform elliptic estimates on $M_\e$}\label{operators:section}
Here are several  
differential operators we use in our analysis: 
\begin{itemize}
\item The conformal Killing operator $\D_\e$, whose action on vector fields is given by $$\big(\D_\e X\big)_{ab}=\frac{1}{2}\big(X_{a;b}+X_{b;a}\big)-\frac{1}{3}\,\left(\div_{g_\e} X\right)\,(g_\e)_{ab}.$$ 
The operator  $\D_\e:C^{k+1,\alpha, \nu}_{1,0}(M_\e)\to C^{k,\alpha,\nu-2}_{0,2}(M_\e)$ has  uniformly bounded norm (see Proposition \ref{De}). It is easy to see that $\mathrm{Im}(\D_\e)$ is contained in the subspace of trace-free symmetric $2$-tensors and that  the formal adjoint  of $\D_\e$ is
$$\D_\e^*: C^{k+1,\alpha,\nu-2}_{0,2}(M_\e)\to C^{k,\alpha,\nu+2}_{1,0}(M_\e) \text{\ \ with\ \ }\D_\e^*(\omega) = -(\div_{g_\e} \omega)^{\sharp}.$$
\medbreak
\item The vector Laplacian $L_\e$, whose action on vector fields is given by $$L_\e X=\D_\e^*\D_\e X=-\div_{g_\e}\big(\D_\e X\big)^{\sharp}.$$
The operators $L_\e$ are self-adjoint and elliptic. They map   $C^{k+2,\alpha, \nu}_{1,0}(M_\e)$ to $C^{k,\alpha, \nu+2}_{1,0}(M_\e)$ and, as we see in Theorem \ref{unif-ell} below, they allow \emph{uniform} elliptic estimates. 
\medbreak
\item The linearized Lichnrowicz operators $\L_\e$, whose action on  functions can schematically be represented as 
$$\L_\e f=\Delta_{g_\e} f-\tfrac{1}{8}R(g_\e) f+h_\e f,$$
where 
$h_\e:M_\e\to \R$ is a family of functions of uniformly bounded $C^{k,\alpha, 2}$-norms: $\|h_\e\|_{k,\alpha, 2}\cle 1$. By Theorem \ref{unif-ell} below, the family of second order (self-adjoint, elliptic) differential  operators $\L_\e: C^{k+2,\alpha, \nu}_{0,0}(M_\e)\to C^{k,\alpha, \nu+2}_{0,0}(M_\e)$ allows \emph{uniform} elliptic estimates.
\end{itemize}
We now prove the above-mentioned properties of $\D_\e$, $L_\e$ and $\L_\e$. 
\begin{prop}\label{De}
If $X\in C^{k+1,\alpha, \nu}_{1,0}(M_\e)$ then $\D_\e X\in C^{k,\alpha, \nu-2}_{0,2}(M_\e)$ and
$$\|\D_\e X\|_{k,\alpha, \nu-2}\cle \|X\|_{k+1,\alpha,\nu}.$$
\end{prop}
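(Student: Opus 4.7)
My plan is to reduce the estimate to a uniform chart-by-chart inequality. By Proposition \ref{main.form} it suffices to show that for every $\Phi\in\C_\e$ and every $X\in C^{k+1,\alpha,\nu}_{1,0}(M_\e)$,
\begin{equation*}
\wef^{\,\nu-2}\,\|\Phi^*(\D_\e X)\|_{C^{k,\alpha}(B_1,\delta)} \cle \wef^{\,\nu}\,\|\Phi^* X\|_{C^{k+1,\alpha}(B_1,\delta)}
\end{equation*}
uniformly in $\Phi$, $X$ and $\e$; taking the supremum over $\Phi$ then yields $\|\D_\e X\|_{k,\alpha,\nu-2} \cle \|X\|_{k+1,\alpha,\nu}$.

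For the chart-wise estimate I set $\tilde g:=\Phi^* g_\e$ and $\tilde X:=\Phi^* X$, so that
\begin{equation*}
\bigl(\Phi^*(\D_\e X)\bigr)_{ab} = \tfrac12\bigl(\tilde\nabla_a \tilde X_b + \tilde\nabla_b \tilde X_a\bigr) - \tfrac13\bigl(\div_{\tilde g}\tilde X\bigr)\,\tilde g_{ab},
\end{equation*}
which exhibits $\Phi^*(\D_\e X)$ as a polynomial in $\partial\tilde X$, $\tilde X$, $\tilde g_{ij}$, $\tilde g^{ij}$ and the Christoffel symbols $\tilde\Gamma^c_{ab}$ of $\tilde g$. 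On the finitely many Type $M$ charts $\tilde g$ is $\e$-independent, and on Type $M_0$ charts $\tilde g=\e^2\so_n^*g_0$ is a constant rescaling of an $\e$-independent metric (the constant factor cancels in $\tilde\Gamma$), so $\tilde\Gamma$ and the $C^{k,\alpha}$-norms of $\tilde g^{\pm 1}$ are uniformly bounded; since $\wef\sim 1$ or $\wef\sim \e$ on these charts, the estimate follows from the standard $C^{k,\alpha}$ product inequality after absorbing the constant powers of $\wef$. On a Type $\G$ chart $\s_P$, Proposition \ref{prop-covers}(2) gives that $g_P=(2/\wef)^2\,\tilde g$ is uniformly $C^k$-close to $\delta$; since $g_P$ and $\tilde g$ differ only by a constant factor they share the same Christoffel symbols, so $\tilde\Gamma$ is uniformly bounded. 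Both the symmetrized $\tilde\nabla\tilde X^\flat$ (with index lowered by $\tilde g$) and the term $(\div_{\tilde g}\tilde X)\tilde g_{ab}$ scale as $(\wef/2)^2$ times the corresponding quantity built from $g_P$, and the latter is controlled by $\|\tilde X\|_{C^{k+1,\alpha}(B_1,\delta)}$; this yields
\begin{equation*}
\|\Phi^*(\D_\e X)\|_{C^{k,\alpha}(B_1,\delta)} \cle \wef^{2}\, \|\Phi^* X\|_{C^{k+1,\alpha}(B_1,\delta)},
\end{equation*}
which after multiplication by $\wef^{\nu-2}$ gives exactly the displayed inequality above.

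I expect the main obstacle to be bookkeeping: verifying that the factors of $\wef$ introduced by the index lowering and by the explicit $\tilde g$ in the trace-removing term combine to exactly the shift from weight $\nu$ to weight $\nu-2$. A secondary technical issue is that Proposition \ref{prop-covers}(2) as stated only controls $g_P-\delta$ in the Euclidean $C^k$-norm, whereas I need $C^{k,\alpha}$ control on $\tilde\Gamma$; however this is immediate from its proof, which produces uniform bounds on derivatives of $g_P$ of all orders (the key input being the uniform bound on derivatives of $\chi\bigl(|\H_P(\cdot)|/\sqrt\e\bigr)$ established in the paragraph containing \eqref{deriv-cut-off}).
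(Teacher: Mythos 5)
Your proposal is correct and follows essentially the same route as the paper: reduce to a chart-by-chart inequality via Proposition \ref{main.form}, then on {\sc Type $\G$} charts use the conformal scaling $\D_{\s_P^*g_\e}=\tfrac{\wef^2}{4}\D_{g_P}$ together with the uniform closeness of $g_P$ to $\delta$ (equivalently, the uniform bound on $\gpnabla-\dnabla$, i.e.\ the Christoffel symbols), and on {\sc Type $M$}/{\sc Type $M_0$} charts use the $\e$-independence of $g_{\pm n}$ with $\wef\sim 1$, $\wef\sim\e$. The bookkeeping you flag works out exactly as you expect, so no changes are needed.
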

\begin{proof} We see from Proposition \ref{main.form}  that it suffices to show  
\begin{equation*}
\|\Phi^*(\D_\e X)\|_{C^{k,\alpha}(B_1,\delta)}\cle \wef^2\|\Phi^* X\|_{C^{k+1,\alpha}(B_1,\delta)}
\end{equation*}
independently of $\Phi\in \C_\e$.  
Our proof  splits into three cases depending on the type of chart $\Phi\in\C_\e$. In the interest of brevity we focus on the most interesting of the three cases, {\sc Type $\G$}.  Let 
$\Phi=\s_P$ with $P\in \G_\e$. We compute:
\begin{equation*}
\begin{aligned}
\|\s_P^*(\D_\e X)\|_{C^{k,\alpha}(B_1,\delta)}=&\|\D_{\s_{P}^*g_\e} \s_P^*X\|_{C^{k,\alpha}(B_1,\delta)}=\tfrac{w_\e(P)^2}{4}\big\|\D_{g_P} \s_P^* X\big\|_{C^{k,\alpha}(B_1,\delta)}\\
\cle &\widehat{w_{\e,\s_P}}^2\|\gpnabla (\s_P^*X)\|_{C^{k,\alpha}(B_1,\delta)}.
\end{aligned}
\end{equation*}
Recall that by Proposition \ref{prop-covers} the linear operator $\gpnabla-\dnabla$ is bounded uniformly in $P\in\G_\e$ and $\e$, together with all of its derivatives; thus 
\begin{equation*}
\begin{aligned}
\|\s_P^*(\D_\e X)\|_{C^{k,\alpha}(B_1,\delta)}\cle &\widehat{w_{\e,\s_P}}^2 \Big[\|\dnabla (\s_P^* X)\|_{C^{k,\alpha}(B_1, \delta)}+\|\s_P^*X\|_{C^{k,\alpha}(B_1,\delta)}\Big]\\
\cle &\widehat{w_{\e,\s_P}}^2 \|\s_P^*X\|_{C^{k+1,\alpha}(B_1,\delta)}.
\end{aligned}
\end{equation*}
The cases of {\sc Type $M$} and {\sc Type $M_0$} are handled in the same manner, using 
\begin{equation}\label{scal-we}
\widehat{w_{\e, \s_n}}\sim 1,\ \  \widehat{w_{\e, \s_{-n}}} \sim\e,
\end{equation}
and the uniformity of $g_{\pm n}$ in $\e$.
\end{proof} 
\begin{thm}\label{unif-ell}
%The vector Laplacian(s) $L_\e:C^{k+2,\alpha, \nu}_{1,0}(M_\e)\to C^{k,\alpha, \nu+2}_{1,0}(M_\e)$ and the linearized Lichnerowicz operator(s) $\L_\e:C^{k+2,\alpha, \nu}_{0,0}(M_\e)\to C^{k,\alpha, \nu+2}_{0,0}(M_\e)$ have their norms bounded uniformly in $\e$. Furthermore, 
Let $\alpha\in(0,1)$. If  vector field $X\in C^{0,0,\nu}(M_\e)$ is such that $L_\e X\in C^{k,\alpha,\nu+2}(M_\e)$
 then $X\in C^{k+2,\alpha,\nu}(M_\e)$ with:
$$\|X\|_{k+2,\alpha, \nu}\cle \|L_\e X\|_{k,\alpha, \nu+2} +\|X\|_{0,0,\nu}.$$
Likewise, if  function $f\in C^{0,0,\nu}(M_\e)$ satisfies  
$\L_\e f\in C^{k, \alpha, \nu+2}(M_\e)$ then $f\in C^{k+2,\alpha, \nu}(M_\e)$ with 
$$\|f\|_{k+2,\alpha, \nu}\cle \|\L_\e f\|_{k,\alpha, \nu+2} +\|f\|_{0,0,\nu}.$$
\end{thm}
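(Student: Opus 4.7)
The plan is to reduce the claimed estimate to classical interior Schauder estimates on Euclidean balls, applied chart-by-chart in the preferred atlas of Section \ref{atlases-sec}, and then reassembled via the weighted-norm equivalences of Propositions \ref{main.form} and \ref{holder-equiv}. The whole approach is enabled by Proposition \ref{prop-covers}: it makes every chart in $\C_\e$ ``look Euclidean'' uniformly in $\e$, so the Schauder constants one invokes will be $\e$-independent. Concretely, using the uniform equivalence between $\|\cdot\|_{k+2,\alpha,\nu}$ and $\|\cdot\|'_{k+2,\alpha,\nu}$ together with Proposition \ref{main.form}, it suffices to establish, uniformly in $\Phi\in\C_\e'$ and in $\e$, a localized bound of the form
$$\wef^\nu\,\|\Phi^*X\|_{C^{k+2,\alpha}(B_{1/2},\delta)}\ \cle\ \|L_\e X\|_{k,\alpha,\nu+2}+\|X\|_{0,0,\nu}.$$

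Next I would split into the three chart types. For a chart of {\sc Type $\G$}, $\Phi=\s_P$, I would work with the rescaled pullback $g_P=\tfrac{4}{\wef^2}\,\s_P^*g_\e$, whose components are uniformly close to $\delta$ in $C^{k,\alpha}(B_1,\delta)$ by Proposition \ref{prop-covers}(2). Since the Levi-Civita connection is invariant under multiplication of the metric by a constant, a direct computation shows that the vector Laplacian transforms as $L_{g_P}(\s_P^*X)=\tfrac{\wef^2}{4}\,\s_P^*(L_\e X)$. Classical interior Schauder estimates for uniformly elliptic linear systems with $C^{k,\alpha}$-bounded coefficients then yield
$$\|\s_P^*X\|_{C^{k+2,\alpha}(B_{1/2},\delta)}\ \cle\ \|L_{g_P}(\s_P^*X)\|_{C^{k,\alpha}(B_1,\delta)}+\|\s_P^*X\|_{C^0(B_1,\delta)}$$
with a constant independent of $P$ and $\e$; multiplying by $\wef^\nu$ and substituting the scaling identity produces precisely the desired bound. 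The cases of {\sc Type $M$} and {\sc Type $M_0$} follow from the same plan but are simpler: the pullback metrics $g_n=\s_n^*g_\e$ and $g_{-n}=\e^{-2}\s_{-n}^*g_\e$ are \emph{independent of $\e$}, and one verifies that the factor $\e^2$ appearing in $L_{g_{-n}}(\s_{-n}^*X)=\e^2\,\s_{-n}^*(L_\e X)$ is exactly compensated by $\widehat{w_{\e,\s_{-n}}}^2\sim\e^2$, producing the correct weight shift.

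The treatment of the linearized Lichnerowicz operator $\L_\e$ is essentially identical, using $\Delta_{c^2g}=c^{-2}\Delta_g$ and the fact that, after pullback and rescaling in each chart, the zero-order coefficient $-\tfrac{1}{8}R(g_\e)+h_\e$ lies in a fixed $C^{k,\alpha}(B_1)$-ball. This is exactly what the assumption $\|h_\e\|_{k,\alpha,2}\cle 1$ was designed to ensure, combined with the uniform $C^{k,\alpha}$-control of $R(g_\Phi)$ that Proposition \ref{prop-covers}(2) provides. The main difficulty, and essentially the only one, will be bookkeeping: one must track the powers of $\wef$ carefully through the rescaling so that the Schauder output assembles into exactly $\|L_\e X\|_{k,\alpha,\nu+2}+\|X\|_{0,0,\nu}$, and one must verify that every constant entering the classical Schauder estimate depends only on the quantities uniformly controlled by Proposition \ref{prop-covers} (ellipticity constants and $C^{k,\alpha}$-norms of the coefficients in each chart), never on $\e$ directly.
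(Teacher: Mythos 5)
Your proposal is correct and follows essentially the same route as the paper's own proof: chart-by-chart interior Schauder estimates on $B_{1/2}\subset B_1$, made uniform in $\e$ by Proposition \ref{prop-covers}, combined with the scaling identities for $L_\e$ and $\L_\e$ (including the role of $\|h_\e\|_{k,\alpha,2}\cle 1$) and reassembled through Propositions \ref{main.form} and \ref{holder-equiv}. The bookkeeping of powers of $\wef$ you flag is exactly what the paper carries out, so no changes are needed.
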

\begin{proof}
Our argument relies on the scaling properties of the vector Laplacian $L_\e$ and the linearized Licherowicz operator $\L_\e$. The reader may find it instructive to compare the following proof with 
that of Proposition \ref{De}. We analyze the vector Laplacian first; it scales as follows:
\begin{equation}\label{scal-Le}
\begin{aligned}
&L_{\s_P^*g_\e}=L_{\frac{w_\e(P)^2}{4}g_P}=4\widehat{w_{\e,\s_P}}^{-2}L_{g_P} \ \ \text{for\ }P\in\G_\e,\text{\ and} \\
&L_{\s_{-n}^*g_\e}=L_{\e^2g_{-n}}=\e^{-2}L_{g_{-n}}\ \  \text{for\ }1\le n\le N. 
\end{aligned}
\end{equation}
%The coefficients of $L_h$, where $h\in\{g_P|P\in \G_\e\}\cup\{g_{\pm n}|1\le n\le N\}$, depend only on $h$. Since $g_P$ and $g_{\pm n}$ are uniformly close to the Euclidean metric $\delta$, we see that 
%$$\|L_h V\|_{C^{k,\alpha}(B_1,\delta)}\cle \|V\|_{C^{k+2,\alpha}(B_1,\delta)}$$
%independently of $V$ and $h$. 
%Taking the scaling properties (\ref{scal-we}) and (\ref{scal-Le}) into account we obtain
%$$\wef^{2}\|\Phi^*L_\e X\|_{C^{k,\alpha}(B_1,\delta)}\cle \|\Phi^* X\|_{C^{k+2,\alpha}(B_1,\delta)}$$ 
%independently of $\Phi\in \C_\e$. The fact that 
%$L_\e:C^{k+2,\alpha,\nu}(M_\e)\to C^{k,\alpha, \nu+2}(M_\e)$ with uniformly bounded norm now follows by taking supremum over $\C_\e$. 

%Similar reasoning leads us to the uniform elliptic regularity for the family $L_\e$. 
%Let $X\in C^{0,0,\nu}(M_\e)$ be such that $L_\e X\in C^{k,\alpha, \nu+2}(M_\e)$; by (\ref{scal-Le}) 
By assumption we have $L_h \Phi^*X \in C^{k,\alpha}(B_1,\delta)$ for all  $\Phi\in \C_\e$ and corresponding metrics 
$$h\in \{g_P|P\in\G_\e\}\cup\{g_{\pm n}|1\le n\le N\}.$$ 
We now apply the basic Schauder interior regularity \cite{GT}
to $L_h$ and open sets $B_\frac{1}{2}\subset B_1$. Recall that the constant in the interior regularity estimates \emph{only} depends on $k$,$\alpha$, the (Euclidean)  distance between $B_\frac{1}{2}$ and $B_1$, a lower bound on the eigenvalues of the principal symbol of $L_h$, and an upper bound on the $C^{k, \alpha}$-norm of the coefficients of $L_h$. Since our metrics $h$ are uniformly close to the Euclidean metric $\delta$ by Proposition \ref{prop-covers}, the constants in the interior regularity estimates can be chosen independently of $h$ and $\e$. We now have  $\Phi^*X\in C^{k+2,\alpha}(B_\frac{1}{2},\delta)$ with 
$$\|\Phi^*X\|_{C^{k+2,\alpha}(B_{\frac{1}{2}},\delta)}\cle \|L_h \Phi^*X\|_{C^{k,\alpha}(B_1,\delta)}+\|\Phi^*X\|_{C^{0,0}(B_1,\delta)}$$
independently of $h$, $\Phi$ and $X$.
The scaling properties (\ref{scal-we}) and (\ref{scal-Le}) further imply 
\begin{equation*}
\wef^\nu \|\Phi^*X\|_{C^{k+2,\alpha}(B_{\frac{1}{2}},\delta)} \cle  \wef^{\nu+2}\|\Phi^*L_\e X\|_{C^{k,\alpha}(B_1,\delta)}+\wef^\nu\|\Phi^*X\|_{C^{0,0}(B_1,\delta)}
\end{equation*}
independently of $h$, $\Phi$ and $X$.
Taking supremum over $\Phi$ yields 
$$\|X\|_{k+2,\alpha,\nu}'\cle \|L_\e X\|_{k,\alpha,\nu+2}+\|X\|_{0,0,\nu}<\infty.$$
The claimed uniform elliptic regularity follows from the (uniform) equivalence of $\|\cdot\|$ and $\|\cdot\|'$ norms (Proposition \ref{holder-equiv}). 

In the case of the linearized Lichnerowicz operator $\L_\e$ we rely on the properties of the differential operators $\L_\Phi$ on $B_1$ defined by 
$$\L_\Phi =\wef^2\Phi^* \L_\e.$$
It is clear from the above that it suffices to show the existence of a uniform lower bound (denoted $\underline{\lambda}$) on the eigenvalues of the principal symbol of operators $\L_\Phi$, and a uniform upper bound (denoted $\overline{\lambda}$) on the $C^{k, \alpha}$-norm of the coefficients of $\L_\Phi$; the word ``uniform" here should be interpreted to mean ``independent of $\Phi\in\C_\e$ and $\e$". We only present the argument in the case of $\Phi=\s_{-n}$ and leave the remaining cases ($\Phi=\s_P$, $\Phi=\s_n$) to an interested reader. Observe that 
\begin{equation*}
\begin{aligned}
\L_{\s_{-n}}=&\widehat{w_{\e,\s_{-n}}}^2 \big[\Delta_{\s_{-n}^*g_\e}-\tfrac{1}{8}R(\s_{-n}^*g_\e)+h_\e\circ\s_{-n}\big]\\
=&\frac{\widehat{w_{\e,\s_{-n}}}^2}{\e^{2}}\Big[\Delta_{g_{-n}}-\tfrac{1}{8}R(g_{-n})\Big]+\widehat{w_{\e,\s_{-n}}}^2 h_\e\circ \s_{-n}.
\end{aligned}
\end{equation*}
The existence of the uniform lower bound $\underline{\lambda}$ and the uniform upper bound $\overline{\lambda}$ follows from the independence of $g_{-n}$ in $\e$,  the scaling property (\ref{scal-we}), and the assumption that $\|h_\e\|_{k,\alpha,2}\cle 1$. 
\end{proof}
\subsection{Weighted function spaces on $(M\smallsetminus\{S\}, g)$} In this paper we also need function spaces which measure decay/growth of tensors in terms of the geodesic distance from $S$. The construction of these spaces is analogous to that used for the spaces $C^{k,\alpha,\nu}(M_\e)$. 

The weight function $w_M:M\smallsetminus\{S\}\to \R$ we utilize here can roughly be described as the geodesic distance from $S$. We define $w_M$ as the unique continuous function which is constant  away from $B^M_{C^{-1}}$ and satisfies
$$\big(w_M\circ\sM\big)(x)=w_{M,\R}(|x|)$$
for the function $w_{M,\R}$ of Section \ref{weight-sec}. Note that $w_M=w_\e\circ \pM$ on $M\smallsetminus B^M_{2C\e}$.  

Next, we introduce the special atlases $\C_S$ and $\C_S'$ for $M\smallsetminus\{S\}$. These contain two types of charts:
\begin{enumerate}
\item {\sc Charts near $S$.} Let $\G=\sM\big(\big\{x\ |\ 0<|x|<\frac{C^{-1}}{4}\big\}\big)\subseteq M\smallsetminus\{S\}$ and let $P=\sM(x_P)\in\G$. To point $P$ we associate the chart $\sM_P:=\sM\circ \H_P$ and the metric $$\gM_P=\tfrac{4}{|x_P|^2}\sM_P^*g,$$  where $\H_P:B_1\to \R^3$ is defined by $\H_P(x)=x_P+\frac{|x_P|}{2}x$ (see Section \ref{atlases-sec}). We also consider the restriction $\sM_P'$ of $\sM_P$ to $B_\frac{1}{2}$. Note that $\pM\circ\sM_P=\s_P$ for $|x_P|\ge 4C\e$ and that $\big(w_M\circ \sM_P\big)(x)=\big|\H_P(x)\big|$ for all $x\in B_1$. 
\medbreak
\item {\sc Charts away from $S$.} To cover $M\smallsetminus \left(\G\cup\{S\}\right)$ we use the charts $\sM_1$,..., $\sM_N$ introduced in the Section \ref{atlases-sec}. In addition, we use the restrictions $\sM_1'$,..., $\sM_N'$ of these charts to the ball $B_\frac{1}{2}$, and the metrics $g_n=\sM_n^*g$, $1\le n\le N$ discussed before. 
\end{enumerate}
Define the atlas $\C_S$ as the collection of charts $\sM_P$ for $P\in \G$ and charts $\sM_n$, $1\le n\le N$; the atlas $\C_S'$ consists of the corresponding restrictions. It is important to notice that Proposition \ref{prop-covers} can easily be modified to yield a result regarding $w_M$ and $\C_S$ (in place of $w_\e$ and $\C_\e$). 
%\begin{prop}\label{prop-covers-2}
%We have the following.
%\begin{enumerate}
%\item  We have
%$w_M\big(\Phi(x)\big) \sim w_M\big(\Phi(0)\big)$ independently of $\Phi\in \C_S$.
%\item Let $k\in\N\cup\{0\}$. The metrics $\gM_P$ for $P\in\G$ are uniformly close in the Euclidean  $C^k$-norm to the Euclidean metric $\delta$.
%\item The transition functions for the charts in $\C_S$ and $\C_S'$ have uniformly bounded Euclidean H\matho lder norms. In particular, 
%\begin{equation*}
%\frac{|F(x)-F(y)|}{|x-y|}\sim 1
%\end{equation*}
%independently of the transition function $F$.
%\item For the Euclidean covariant differentiation $\dnabla$, all $k\in\mathbb{N}\cup\{0\}$ and all $\nu\in\R$ we have
%$$\big|\dnabla^k(w_M\circ\Phi)^\nu\big| \cle \big(w_M\circ\Phi)^\nu$$
%independently of $\Phi\in\C_S$. 
%\end{enumerate}
%\end{prop}

The H\o lder norms in this setting are defined as
\begin{equation*}
\|T\|_{k,\alpha}:=\sup_{\Phi\in\C_S}\|\Phi^* T\|_{C^{k,\alpha}(B_1, \delta)},\ \ \ \ \|T\|_{k,\alpha, \nu}:=\|w_M^\nu T\|_{k,\alpha};
\end{equation*}
analogous expressions yield $\|\cdot\|'$. 
Using scaling properties analogous to Proposition \ref{prop-covers} one can prove the following norm equivalences:
\begin{equation*}
\begin{aligned}
&\|T\|_{k,\alpha, \nu}\sim\sup_{\Phi\in \C_S}\widehat{w_{M,\Phi}}^\nu\|\Phi^* T\|_{C^{k,\alpha}(B_1, \delta)}\sim \|T\|_{k,\alpha,\nu}'\sim \sup_{\Phi\in\C_S'}\widehat{w_{M,\Phi}}^\nu\|\Phi^* T\|_{C^{k,\alpha}(B_\frac{1}{2}, \delta)},\\
&\|T\|_{k,0,\nu}\sim\|T\|_{k,0,\nu}'\sim \sum_{j=0}^k \sup_{M\smallsetminus\{S\}}\big[w_M^{-p+q+j+\nu}\big|\gnabla^j T|_g\big];
\end{aligned}
\end{equation*}
here $\widehat{w_{M,\Phi}}:=w_M\big(\Phi(0)\big)$ are sample values of the weight function. Let 
\begin{equation*}
C^{k,\alpha}_{p,q}(M;S):=\Big\{T^{i_1...i_p}_{j_1... j_q}\ \big|\ \|T\|_{k,\alpha}<\infty\} \text{\ \ and\ \ }
C^{k,\alpha,\nu}_{p,q}(M;S):=\Big\{T^{i_1...i_p}_{j_1... j_q}\ \big|\ \|T\|_{k,\alpha,\nu}<\infty\Big\}
\end{equation*}
be the corresponding (weighted) function spaces.

We have the following weighted elliptic estimates for the vector Laplacian $L_g$ and the linearized Lichnerowicz operator $\L_g$. The latter can schematically be represented by 
$$\L_gf=\Delta_gf-\tfrac{1}{8}R(g)\,f+h\, f$$
with $h:M\to \R$  smooth. 

\begin{thm}\label{ell-M;S}
%The vector Laplacian $L_g$ maps $C^{k,\alpha, \nu}_{1,0}(M;S)$ to $C^{k-2,\alpha, \nu+2}_{1,0}(M;S)$; the linearized Lichnerowicz operator $\L_g$ maps $C^{k,\alpha, \nu}_{0,0}(M;S)$ to $C^{k-2,\alpha, \nu+2}_{0,0}(M;S)$. Assume that for some $k\ge 2$, $\alpha$ and $\nu$ we have a vector field 
Let $\alpha\in(0,1)$. If  vector field $X\in C^{0,0,\nu}(M;S)$ is such that  $L_g X\in C^{k,\alpha,\nu+2}(M;S)$ then $X\in C^{k+2,\alpha,\nu}(M;S)$ and
$$\|X\|_{k+2,\alpha, \nu}\cle \|L_g X\|_{k,\alpha, \nu+2} +\|X\|_{0,0,\nu}.$$
Likewise, if  function $f\in C^{0,0,\nu}(M;S)$ satisfies 
$\L_g f\in C^{k, \alpha, \nu+2}(M;S)$ then $f\in C^{k+2,\alpha, \nu}(M;S)$
and 
$$\|f\|_{k+2,\alpha, \nu}\cle\|\L_g f\|_{k,\alpha, \nu+2} +\|f\|_{0,0,\nu}.$$
\end{thm}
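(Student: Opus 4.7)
The plan is to mimic the proof of Theorem \ref{unif-ell} almost verbatim, with $w_M$, $\C_S$, $\C_S'$, $\gM_P$ and $g_n$ playing the roles of $w_\e$, $\C_\e$, $\C_\e'$, $g_P$ and $g_{\pm n}$. The construction of the atlas $\C_S$ (the dilation charts $\sM_P$ near $S$ and the finitely many charts $\sM_n$ away from $S$) has been engineered so that the analogue of Proposition \ref{prop-covers} holds: the scaled pullback metrics $\gM_P = \tfrac{4}{|x_P|^2}\sM_P^*g$ are uniformly close to $\delta$ in every $C^k$-norm, the transition functions have uniformly bounded derivatives, and $w_M(\sM_P(x)) = |\H_P(x)|$, so that $|\dnabla^k (w_M \circ \Phi)^\nu| \lesssim (w_M\circ \Phi)^\nu$ independently of $\Phi\in\C_S$. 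All scaling identities carry over verbatim.

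For the vector Laplacian I would first record the scaling $L_{\sM_P^*g} = \tfrac{4}{|x_P|^2}\,L_{\gM_P}$ for $P\in\G$ (and $L_{\sM_n^*g}=L_{g_n}$ for the finitely many outer charts), then apply the standard interior Schauder estimate of \cite{GT} for $L_{\gM_P}$ on the pair $B_{1/2}\subset B_1$. Because the metrics $\gM_P$ are uniformly close to $\delta$, the lower bound on the principal symbol and the $C^{k,\alpha}$-norms of the coefficients of $L_{\gM_P}$ are controlled independently of $P\in\G$ and $\e$; the Schauder constant is therefore uniform. This yields
$$\|\sM_P^*X\|_{C^{k+2,\alpha}(B_{1/2},\delta)} \lesssim \|L_{\gM_P}\sM_P^*X\|_{C^{k,\alpha}(B_1,\delta)} + \|\sM_P^*X\|_{C^{0,0}(B_1,\delta)}.$$
Multiplying by $\widehat{w_{M,\Phi}}^\nu=|x_P|^\nu$, using the scaling of $L_g$ just recorded, taking the supremum over $\Phi\in\C_S'$, and then appealing to the analogue of Proposition \ref{holder-equiv} to pass between $\|\cdot\|$ and $\|\cdot\|'$, delivers the stated estimate for $X$.

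For the linearized Lichnerowicz operator the only modification is to introduce $\L_\Phi := \widehat{w_{M,\Phi}}^2 \Phi^*\L_g$ and verify that its principal symbol has a uniform positive lower bound while its coefficients have uniformly bounded $C^{k,\alpha}$-norm. The principal symbol bound is again immediate from the uniform closeness of $\gM_P$ to $\delta$. For the zeroth-order coefficient, note that $\widehat{w_{M,\Phi}}^2 = |x_P|^2 \le C^{-2}$ is bounded, while $R(g)$ and $h$ are smooth on the \emph{compact} manifold $M$ and hence bounded together with all derivatives; one then pulls them back through the bounded-distortion charts $\sM_P$. This gives the required uniform coefficient bound, and the proof proceeds exactly as for $L_g$.

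The main obstacle I would expect is merely notational: ensuring that the "infinite part" of the atlas (the dilation charts $\sM_P$ indexed by $P\in\G$) truly produces constants independent of $P$ as $P\to S$. This is precisely what is bought by the scaling set-up, since the principal-symbol bound, the transition-function bounds, and the coefficient bounds all scale correctly under $\H_P$. Once the analogue of Proposition \ref{prop-covers} is in hand (as the paper explicitly asserts), the Schauder-plus-scaling argument of Theorem \ref{unif-ell} transfers without further surprises.
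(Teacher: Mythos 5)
Your proposal is correct and follows essentially the same route as the paper, which itself only sketches this proof as "analogous to Theorem \ref{unif-ell}": the scaling identities $L_{\sM_P^*g}=\tfrac{4}{|x_P|^2}L_{\gM_P}$, the uniform closeness of $\gM_P$ to $\delta$, uniform interior Schauder estimates on $B_{1/2}\subset B_1$, and the norm equivalences for $\|\cdot\|$ and $\|\cdot\|'$ on $(M;S)$. Your treatment of the zeroth-order term of $\L_\Phi$ (smoothness of $R(g)$ and $h$ on the compact $M$, boundedness of $\widehat{w_{M,\Phi}}$, bounded-distortion pullback) is exactly the verification that $h\in C^{k,\alpha,2}(M;S)$, which is the one point the paper explicitly flags.
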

The proof of this theorem is analogous to the proof of Theorem \ref{unif-ell}: one uses the scaling properties of the vector Laplacian and the linearized Licherowicz operator together with the basic interior elliptic estimates and various norm equivalences. We omit the details, but point out that in the case of the linearized Licherowicz operator we use the fact that $h\in C^{k,\alpha,2}(M;S)$.
 
\subsection{Weighted function spaces on $(M_0, g_0)$} We also need function spaces which keep track of the decay of tensors on $(M_0, g_0)$ with respect to  the radial function of the asymptotia. The construction of these spaces is analogous to that used for the spaces $C^{k,\alpha,\nu}(M_\e)$ and $C^{k,\alpha,\nu}(M;S)$.

The weight function $w_0:M_0\to \R$ below can be viewed as a smooth extension of the radial function $|\so^{-1}(.)|$. We define $w_0$ to be the unique continuous function which is constant  away from $\so\big(\R^3\smallsetminus \bar{B}_C\big)$ and satisfies
$$(w_0)\circ(\so):x\mapsto w_{0,\R}(|x|)$$
for the function $w_{0,\R}$ of Section \ref{weight-sec}. 

Next we introduce the special atlases $\C_0$ and $\C_0'$ for $M_0$. These contain two types of charts:
\begin{enumerate}
\item {\sc Charts in the asymptotia.}  Let $P=\so(x_P)\in\Oo_{4C}$. To this point we associate the chart $\so_P:=\so\circ \H_P$, where $\H_P:B_1\to \R^3$ is defined by $\H_P(x)=x_P+\frac{|x_P|}{2}x$.  By construction $\big(w_0\circ \so_P\big)(x)=\big|\H_P(x)\big|$ for all $x\in B_1$. The methods used in the proof of Proposition \ref{prop-covers} show that the metrics 
$$\go_P=\tfrac{4}{|x_P|^2}\, \so_P^*g_0,\ \ \ \ \ P\in\Oo_{4C},$$
are uniformly close to the Euclidean metric $\delta$. We also consider the restrictions $\so_P'$ of $\so_P$ to the ball $B_\frac{1}{2}$. 
\medbreak
\item {\sc Charts away from the asmyptotia.} We cover $M_0\smallsetminus \Oo_{4C}$ with the charts $\so_n$, $1\le n\le N$, introduced in Section \ref{atlases-sec}. We also consider the restrictions $\so_n'$ of these charts to $B_\frac{1}{2}$, and the metrics $g_{-n}=\so_n^*g_0$ discussed before. 
\end{enumerate}
Define the atlas $\C_0$ to be the collection of charts $\so_P$ for $P\in \Oo_{4C}$ and charts $\so_n$, $1\le n\le N$. Furthermore, define the H\o lder norm $\|T\|_{k,\alpha, \nu}$ by
\begin{equation*}
\|T\|_{k,\alpha}:=\sup_{\Phi\in\C_0}\|\Phi^* T\|_{C^{k,\alpha}(B_1, \delta)},\ \ \ \ \|T\|_{k,\alpha, \nu}:=\|w_0^\nu T\|_{k,\alpha}.
\end{equation*}
The atlas $\C_0'$ and the norm $\|T\|_{k,\alpha, \nu}'$ are defined analogously using restrictions of $\Phi\in\C_0$ to $B_{\frac{1}{2}}$.  
One can easily extend the results of Proposition \ref{prop-covers} to prove the following norm equivalences:
\begin{equation*}
\begin{aligned}
&\|T\|_{k,\alpha, \nu}\sim\sup_{\Phi\in \C_0}\widehat{w_{0,\Phi}}^\nu\|\Phi^* T\|_{C^{k,\alpha}(B_1, \delta)}\sim \|T\|_{k,\alpha,\nu}'\sim \sup_{\Phi\in\C_0'}\widehat{w_{0,\Phi}}^\nu\|\Phi^* T\|_{C^{k,\alpha}(B_\frac{1}{2}, \delta)},\\
&\|T\|_{k,0,\nu}\sim\|T\|_{k,0,\nu}'\sim \sum_{j=0}^k \sup_{M_0}\big[w_0^{-p+q+j+\nu}\big|\gonabla^j T|_{g_0}\big];
\end{aligned}
\end{equation*}
here $\widehat{w_{0,\Phi}}:=w_0\big(\Phi(0)\big)$ are sample values of the weight function. Let 
\begin{equation*}
C^{k,\alpha}_{p,q}(M_0;\infty):=\Big\{T^{i_1...i_p}_{j_1... j_q}\ \big|\ \|T\|_{k,\alpha}<\infty\} \text{\ \ and\ \ }
C^{k,\alpha,\nu}_{p,q}(M_0;\infty):=\Big\{T^{i_1...i_p}_{j_1... j_q}\ \big|\ \|T\|_{k,\alpha,\nu}<\infty\Big\}
\end{equation*}
denote the corresponding (weighted) function spaces.
We use these function spaces when studying the vector Laplacian $L_{g_0}$. The following theorem is analogous to Theorem \ref{unif-ell}.
\begin{thm}\label{ell-M;infty}
%The vector Laplacian $L_{g_0}$ maps $C^{k,\alpha, \nu}_{1,0}(M_0;\infty)$ to $C^{k-2,\alpha, \nu+2}_{1,0}(M_0;\infty)$. Furthermore,  
Let $\alpha\in(0,1)$. If  $X\in C^{0,0,\nu}_{1,0}(M_0;\infty)$ is such that $L_{g_0} X\in C^{k,\alpha,\nu+2}_{1,0}(M_0;\infty)$ then $X\in C^{k+2,\alpha,\nu}_{1,0}(M_0;\infty)$ and
$$\|X\|_{k+2,\alpha, \nu}\cle \|L_{g_0} X\|_{k,\alpha, \nu+2} +\|X\|_{0,0,\nu}.$$
\end{thm}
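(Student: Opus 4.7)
The plan is to mimic the proof of Theorem \ref{unif-ell} (and of the unstated Theorem \ref{ell-M;S}), transferring the Schauder interior regularity estimate from each chart of the atlas $\C_0$ to a weighted estimate on $M_0$ via the scaling properties of $L_{g_0}$ and the weight function $w_0$.

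First, I would record the scaling relations of $L_{g_0}$ in the two types of charts in $\C_0$. For a chart $\so_P$ in the asymptotia with $P=\so(x_P)\in\Oo_{4C}$, since $\so_P^* g_0 = \tfrac{|x_P|^2}{4}\go_P$ and the vector Laplacian rescales by $c^{-2}$ under $g\mapsto c^2 g$ for constant $c$, we get
\begin{equation*}
L_{\so_P^* g_0} \;=\; \tfrac{4}{|x_P|^2}\, L_{\go_P} \;=\; 4\,\widehat{w_{0,\so_P}}^{-2}\, L_{\go_P},
\end{equation*}
while for a chart of the second type $\so_n$ ($1\le n\le N$) we simply have $L_{\so_n^*g_0}=L_{g_{-n}}$, with $\widehat{w_{0,\so_n}}\sim 1$. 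The metrics $\go_P$ are uniformly close to $\delta$ in every $C^{k,\alpha}(B_1,\delta)$-norm by the analogue of Proposition \ref{prop-covers} for $\C_0$ (which in turn uses the AE estimates of Definition \ref{AE}), and the $g_{-n}$ form a finite collection of smooth metrics; hence the principal symbols of $L_{\go_P}$ and $L_{g_{-n}}$ admit a uniform positive lower bound and their coefficients a uniform $C^{k,\alpha}$-bound.

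Next, I would pull back the equation. For each $\Phi\in\C_0$, set $h=\go_P$ if $\Phi=\so_P$ and $h=g_{-n}$ if $\Phi=\so_n$. Then $L_h(\Phi^*X) \in C^{k,\alpha}(B_1,\delta)$ by assumption, and $\Phi^*X \in C^{0,0}(B_1,\delta)$. Standard interior Schauder regularity on the nested balls $B_{\frac{1}{2}}\subset B_1$ applied to $L_h$ (whose Schauder constants are \emph{uniform in $P$} by the preceding paragraph) gives
\begin{equation*}
\|\Phi^*X\|_{C^{k+2,\alpha}(B_{\frac{1}{2}},\delta)} \;\cle\; \|L_h(\Phi^*X)\|_{C^{k,\alpha}(B_1,\delta)} + \|\Phi^*X\|_{C^{0,0}(B_1,\delta)}
\end{equation*}
independently of $\Phi$ and $X$. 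Multiplying through by $\widehat{w_{0,\Phi}}^{\nu}$ and using the scaling identities above to rewrite $\|L_h(\Phi^*X)\|_{C^{k,\alpha}(B_1,\delta)}=\tfrac{1}{4}\widehat{w_{0,\Phi}}^2\|\Phi^*(L_{g_0}X)\|_{C^{k,\alpha}(B_1,\delta)}$ in the asymptotic charts (and the trivial version in the $\so_n$-charts), I obtain
\begin{equation*}
\widehat{w_{0,\Phi}}^{\nu}\|\Phi^*X\|_{C^{k+2,\alpha}(B_{\frac{1}{2}},\delta)} \;\cle\; \widehat{w_{0,\Phi}}^{\nu+2}\|\Phi^*(L_{g_0}X)\|_{C^{k,\alpha}(B_1,\delta)} + \widehat{w_{0,\Phi}}^{\nu}\|\Phi^*X\|_{C^{0,0}(B_1,\delta)}
\end{equation*}
uniformly in $\Phi\in\C_0$.

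Finally, taking the supremum over $\Phi\in\C_0$ and invoking the weighted norm equivalences stated for $\C_0, \C_0'$ (in particular $\sup_{\Phi\in\C_0'}\widehat{w_{0,\Phi}}^{\nu}\|\Phi^*X\|_{C^{k+2,\alpha}(B_{\frac{1}{2}},\delta)} \sim \|X\|_{k+2,\alpha,\nu}'$, and similarly for the right-hand side) gives the desired estimate with $\|\cdot\|'_{k+2,\alpha,\nu}$ on the left. The obvious analogue of Proposition \ref{holder-equiv} in this setting (which holds because the charts $\so_P$ have the same affine structure as the $\s_P$'s in $\C_\e$) then lets me replace $\|\cdot\|'_{k+2,\alpha,\nu}$ by $\|\cdot\|_{k+2,\alpha,\nu}$. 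The main technical burden is the uniform-in-$P$ verification of the Schauder ingredients for $L_{\go_P}$, which is where the AE hypothesis on $g_0$ enters decisively; once that is in hand, the rest is bookkeeping with scaling factors and weight function samples $\widehat{w_{0,\Phi}}$.
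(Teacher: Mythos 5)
Your proposal is correct and is essentially the argument the paper intends: the paper explicitly leaves the proof to the reader, citing exactly the ingredients you use (the scaling property $L_{\so_P^*g_0}=4\,\widehat{w_{0,\so_P}}^{-2}L_{\go_P}$, uniform interior Schauder estimates justified by the uniform closeness of $\go_P$ to $\delta$, and the stated norm equivalences including $\|\cdot\|_{k,\alpha,\nu}\sim\|\cdot\|'_{k,\alpha,\nu}$), in direct analogy with the proof of Theorem \ref{unif-ell}.
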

To prove the theorem one uses the scaling properties of the vector Laplacian together with the basic interior elliptic estimates and various norm equivalences. The details are left to the reader. 

\subsection{Weighted function spaces on $(\R^3\smallsetminus\{0\}, \delta)$} It is avoidable, but highly convenient to use function spaces on $(\R^3\smallsetminus\{0\}, \delta)$ with weight function $r:x\mapsto |x|$. In a sense these function spaces blend the usefulness of $C^{k,\alpha,\nu}(M;S)$ near the origin with that of $C^{k,\alpha,\nu}(M_0;\infty)$ ``near" $\infty$. 

The special atlas $\C$ consists of charts of the form 
$$\H_P:B_1\to \R^3\smallsetminus\{0\},\ \  \H_P(x)=P+\tfrac{|P|}{2}x,\ \ \text{where}\  P\in \R^3\smallsetminus\{0\}.$$ 
We define the (weighted) H\"older norm in the following manner:
\begin{equation*}
\|T\|_{k,\alpha}:=\sup_{\H_P\in\C}\|\H_P^* T\|_{C^{k,\alpha}(B_1, \delta)},\ \ \ \ \|T\|_{k,\alpha, \nu}:=\|r^\nu T\|_{k,\alpha}.
\end{equation*}
The atlas $\C'$ and the norm $\|T\|_{k,\alpha, \nu}'$ are defined analogously using the restrictions of $\H_P\in\C$ to $B_{\frac{1}{2}}$. 
One can easily extend the results of Proposition \ref{prop-covers} and prove the norm equivalences
\begin{equation*}
\begin{aligned}
&\|T\|_{k,\alpha, \nu}\sim\sup_{\H_P\in \C}|P|^\nu\,\|\H_P^* T\|_{C^{k,\alpha}(B_1, \delta)}\sim \|T\|_{k,\alpha,\nu}'\sim \sup_{\H_P\in\C'}|P|^\nu\,\|\H_P^* T\|_{C^{k,\alpha}(B_\frac{1}{2}, \delta)},\\
&\|T\|_{k,0,\nu}\sim\|T\|_{k,0,\nu}'\sim \sum_{j=0}^k \sup_{\R^3\smallsetminus\{0\}}\big[r^{-p+q+j+\nu}\big|\dnabla^j T|_\delta\big].
\end{aligned}
\end{equation*}
The corresponding weighted function spaces are
\begin{equation*}
C^{k,\alpha,\nu}_{p,q}(\R^3;0,\infty):=\Big\{T^{i_1...i_p}_{j_1... j_q}\ \big|\ \|T\|_{k,\alpha,\nu}<\infty\Big\}.
\end{equation*}
We use these function spaces when studying the vector Laplacian $L_\delta$ and the scalar Laplacian $\Delta_\delta$. The following theorem is analogous to Theorem \ref{unif-ell}.
\begin{thm}\label{ell-R3}
%The vector Laplacian $L_\delta$ maps $C^{k,\alpha, \nu}_{1,0}(\R^3;0,\infty)$ to $C^{k-2,\alpha, \nu+2}_{1,0}(\R^3;0,\infty)$; the scalar Laplacian $\Delta_\delta$ maps $C^{k,\alpha, \nu}_{0,0}(\R^3;0,\infty)$ to $C^{k-2,\alpha, \nu+2}_{0,0}(\R^3;0,\infty)$. Assume that for some $k\ge 2$, $\alpha$ and $\nu$ we have 
Let $\alpha\in(0,1)$. If $X\in C^{0,0,\nu}_{1,0}(\R^3;0,\infty)$ is such that $L_\delta X\in C^{k,\alpha,\nu+2}_{1,0}(\R^3;0,\infty)$  then 
$X\in C^{k+2,\alpha,\nu}_{1,0}(\R^3;0,\infty)$  and 
$$\|X\|_{k+2,\alpha, \nu}\cle \|L_\delta X\|_{k,\alpha, \nu+2} +\|X\|_{0,0,\nu}.$$
If  function $f\in C^{0,0,\nu}(\R^3;0,\infty)$ satisfies  
$\Delta_\delta f\in C^{k, \alpha, \nu+2}(\R^3;0,\infty)$ then 
$f\in C^{k+2,\alpha, \nu}(\R^3;0,\infty)$ and 
$$ \|f\|_{k+2,\alpha, \nu}\cle\|\Delta_\delta f\|_{k,\alpha, \nu+2} +\|f\|_{0,0,\nu}.$$
\end{thm}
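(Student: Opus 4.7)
The plan is to mirror the proof of Theorem \ref{unif-ell}, exploiting the fact that on $(\R^3\smallsetminus\{0\},\delta)$ the charts $\H_P\in\C$ are affine and the Euclidean metric is scale-invariant in a particularly simple way. Concretely, $\H_P^*\delta=\tfrac{|P|^2}{4}\,\delta$, from which one obtains the scaling identities
\begin{equation*}
L_{\H_P^*\delta}=\tfrac{4}{|P|^2}\,L_\delta,\qquad \Delta_{\H_P^*\delta}=\tfrac{4}{|P|^2}\,\Delta_\delta,
\end{equation*}
and therefore $\H_P^*(L_\delta X)=\tfrac{4}{|P|^2}\,L_\delta(\H_P^*X)$ and analogously for $\Delta_\delta f$. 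Thus after pullback by $\H_P$ both operators are just the \emph{standard} Euclidean vector/scalar Laplacian acting on $B_1$, with coefficients that are independent of $P$.

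First I would apply the standard interior Schauder estimates on $B_{1/2}\Subset B_1$ to the Euclidean operators $L_\delta$ and $\Delta_\delta$ acting on $\H_P^*X$ and $\H_P^*f$ respectively. Since the principal symbols and coefficients are literally constant (they come from the Euclidean metric), the resulting constants depend only on $k,\alpha$ and are automatically uniform in $P$. This yields, for every $\H_P\in\C$,
\begin{align*}
\|\H_P^*X\|_{C^{k+2,\alpha}(B_{1/2},\delta)}&\cle \|L_\delta(\H_P^*X)\|_{C^{k,\alpha}(B_1,\delta)}+\|\H_P^*X\|_{C^{0,0}(B_1,\delta)},\\
\|\H_P^*f\|_{C^{k+2,\alpha}(B_{1/2},\delta)}&\cle \|\Delta_\delta(\H_P^*f)\|_{C^{k,\alpha}(B_1,\delta)}+\|\H_P^*f\|_{C^{0,0}(B_1,\delta)}.
\end{align*}

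Next I would multiply through by $|P|^\nu$ and use the scaling identities above together with $\H_P^*(L_\delta X)=\tfrac{4}{|P|^2}L_\delta(\H_P^*X)$ to convert $\|L_\delta(\H_P^*X)\|_{C^{k,\alpha}(B_1,\delta)}$ into $\tfrac{|P|^2}{4}\,\|\H_P^*(L_\delta X)\|_{C^{k,\alpha}(B_1,\delta)}$. Taking the supremum over $\H_P\in\C'$ and invoking the norm equivalences
\begin{equation*}
\|T\|_{k,\alpha,\nu}'\sim\sup_{\H_P\in\C'}|P|^\nu\,\|\H_P^*T\|_{C^{k,\alpha}(B_{1/2},\delta)}
\end{equation*}
(and the analogous $\|\cdot\|_{k,\alpha,\nu}$ equivalence on $B_1$ for the right-hand side) then gives
\begin{equation*}
\|X\|_{k+2,\alpha,\nu}'\cle \|L_\delta X\|_{k,\alpha,\nu+2}+\|X\|_{0,0,\nu},
\end{equation*}
and the analogous estimate for $f$. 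Finally, the uniform equivalence of $\|\cdot\|$ and $\|\cdot\|'$ (the analogue of Proposition \ref{holder-equiv}) upgrades the primed norm on the left to the unprimed one.

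There is essentially no serious obstacle here: unlike in Theorem \ref{unif-ell} there are no $\e$-dependent metrics or cut-off functions to track, and unlike in Theorem \ref{ell-M;S} there is no compact piece to glue with. The only bookkeeping worth doing carefully is the exponent count $-p+q+\nu+j$ in the covariant form of the norm, which determines exactly why the weight on the right is $\nu+2$ (the operators raise the weight index by $2$), but this is forced by the scaling identity $L_{\H_P^*\delta}=\tfrac{4}{|P|^2}L_\delta$ and matches the pattern already established in Theorems \ref{unif-ell} and \ref{ell-M;S}.
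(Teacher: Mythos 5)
Your proposal is correct and follows exactly the route the paper intends: the paper leaves the proof of Theorem \ref{ell-R3} to the reader, stating only that it is analogous to Theorem \ref{unif-ell}, and your argument is precisely that analogue, using the scaling identity $\H_P^*\delta=\tfrac{|P|^2}{4}\delta$ (hence $L_{\H_P^*\delta}=\tfrac{4}{|P|^2}L_\delta$ and $\Delta_{\H_P^*\delta}=\tfrac{4}{|P|^2}\Delta_\delta$), interior Schauder estimates on $B_{1/2}\subset B_1$ with constants that are automatically uniform in $P$, the norm equivalences $\|T\|_{k,\alpha,\nu}\sim\sup_{\H_P}|P|^\nu\|\H_P^*T\|_{C^{k,\alpha}(B_1,\delta)}$, and the uniform equivalence of the primed and unprimed norms. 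The bookkeeping of the weight shift by $2$ via $|P|^{\nu+2}\|\H_P^*(L_\delta X)\|=4|P|^{\nu}\|L_\delta(\H_P^*X)\|$ is exactly right.
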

The details of the proof are left to the reader.

\section{Repairing the Momentum Constraint}

We start by estimating the extent to which our approximate data $(M_\e, g_\e, K_\e)$ fail to satisfy the momentum constraint. With the notational conventions of Section \ref{mu:glue}  this constraint reduces to $\div_{g_\e} \mu_\e=0$. To accommodate the needs of our discussion later on, we not only prove an estimate for $(\div_{g_\e}\mu_\e)^{\sharp}$ but also a more general result regarding $\mu_\e$.

\begin{prop}\label{div:est} \hfill
\begin{enumerate}
\item We have $\|\mu_\e\|_{k,\alpha,-1}\cle 1$.
\item Let $U_\e:=w_\e^{-1}\left(\tfrac{\sqrt{\e}}{4}, +\infty\right)\subseteq M_\e$. We have $\|\mu_\e\|_{k,\alpha,-2;U_\e}\cle 1$.
\item Let $V_\e:=w_\e^{-1}\left(0,12\sqrt{\e}\right)\subseteq M_\e$. We have 
$\|\mu_\e\|_{k,\alpha,0;V_\e}\cle \e$. 
\item Finally, $\|(\div_{g_\e}\mu_\e)^{\sharp}\|_{k,\alpha,\nu}\cle \e^{(\nu/2)-1}.$ 
\end{enumerate}
\end{prop}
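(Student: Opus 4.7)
The plan is to exploit the chart-based characterizations of the weighted H\"older norms (Proposition \ref{main.form} and \eqref{main.restriction}), splitting the estimates according to the three chart types in $\C_\e$ and the support structure of the cut-offs defining $\mu_\e$ in \eqref{defn-mu}.

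For parts $(1)$--$(3)$ I would bound $\wef^{\nu}\|\Phi^*\mu_\e\|_{C^{k,\alpha}(B_1,\delta)}$ chart by chart. On a Type $M$ chart, $\Phi^*\mu_\e=\sM_n^*\mu$ has uniformly bounded $C^{k,\alpha}$-norm and $\wef\sim 1$. On a Type $M_0$ chart, $\Phi^*\mu_\e=\e\,\so_n^*\mu_0$ with $\wef\sim\e$. The Type $\G$ case is the heart: for $\s_P=\s\circ \H_P$ with $|x_P|\geq 4C\e$, the Jacobian factor $(|x_P|/2)^{|\beta|+2}$ of $\H_P$ for a $(0,2)$-tensor combines with the smoothness of $\sM^*\mu$ near the origin to give $\|\H_P^*\sM^*\mu\|_{C^{k,\alpha}(B_1,\delta)}\cle |x_P|^2$, and with the AE estimate \eqref{asymptK} to give $\|\H_P^*\se^*\e\mu_0\|_{C^{k,\alpha}(B_1,\delta)}\cle \e$. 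The cut-off derivatives appearing in \eqref{defn-mu} are nontrivial only when $|x_P|\sim\sqrt{\e}$, in which case their natural scale matches the chart scale and they pull back to uniformly bounded functions on $B_1$. Hence $\|\s_P^*\mu_\e\|_{C^{k,\alpha}(B_1,\delta)}\cle |x_P|^2+\e$, and multiplying by $\wef^\nu\sim|x_P|^\nu$ gives all three parts: $(1)$ uses $|x_P|^{-1}(|x_P|^2+\e)\cle |x_P|+\e/|x_P|\cle 1$ (since $|x_P|\geq 4C\e$), $(2)$ restricts to $U_\e$ where $|x_P|\geq\sqrt{\e}/4$ so $\e/|x_P|^2\cle 1$, and $(3)$ restricts to $V_\e$ where $|x_P|\cle\sqrt{\e}$ so that $|x_P|^2+\e\cle\e$.

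Part $(4)$ rests on the observation that $\div_{g_\e}\mu_\e$ is supported in the two thin annuli $A_1:=\{\sqrt{\e}/2\leq|x|\leq\sqrt{\e}\}$ and $A_2:=\{4\sqrt{\e}\leq|x|\leq 8\sqrt{\e}\}$. Indeed, on $|x|\leq\sqrt{\e}/2$ the pair $(g_\e,\mu_\e)$ is the pullback of $(\e^2g_0,\e\mu_0)$, so $\div_{g_\e}\mu_\e=\e^{-1}\se^*\div_{g_0}\mu_0=0$; on $|x|\geq 8\sqrt{\e}$ similarly $\div_{g_\e}\mu_\e=\sM^*\div_g\mu=0$; and $\mu_\e\equiv 0$ on $\sqrt{\e}\leq|x|\leq 4\sqrt{\e}$. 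Crucially, on $A_1\cup A_2$ the \emph{metric} cut-off $\chi(|x|/\sqrt{\e})$ is locally constant, so $g_\e$ is a pure pullback there and \eqref{asymptg}, \eqref{metricM} show $g_\e=\delta+O(\sqrt{\e})$ in the $\s$-coordinates. The divergence reduces to $(d\eta)\intprod_{g_\e}(\text{pure-pullback tensor})$, with $\eta$ the corresponding $\mu_\e$-cut-off; each $g_\e$-covariant derivative adds one factor $1/\sqrt{\e}$ from differentiating $\eta$, so $|\genabla^j(\div_{g_\e}\mu_\e)^{\sharp}|_{g_\e}\cle \e^{-(j+1)/2}$ on $A_1\cup A_2$. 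Since $w_\e\sim\sqrt{\e}$ there, the equivalent norm \eqref{normeq:restr} yields for every $0\leq j\leq k$
$$w_\e^{-1+j+\nu}\big|\genabla^j(\div_{g_\e}\mu_\e)^{\sharp}\big|_{g_\e}\cle (\sqrt{\e})^{-1+j+\nu}\cdot\e^{-(j+1)/2}=\e^{\nu/2-1},$$
with the $C^{0,\alpha}$-seminorm contributions handled by the same scaling, completing the bound.

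The main technical obstacle is the substantial disparity between the $g_\e$-norm and Euclidean component norms across $M_\e$ (the metric has components of order $\e^2$ on the $M_0$-side), which in general injects awkward powers of $\e$ into divergence estimates. The key simplification, which I would use in part $(4)$, is that precisely on the support of $\div_{g_\e}\mu_\e$ the metric $g_\e$ is a pure pullback and nearly Euclidean in the $\s$-coordinates; this eliminates metric-inversion factors and reduces the estimate to an essentially Euclidean product-rule computation for a cut-off at scale $\sqrt{\e}$.
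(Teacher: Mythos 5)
Your proposal is correct and follows essentially the same route as the paper: chart-by-chart bounds giving $\|\s_P^*\mu_\e\|_{C^{k,\alpha}(B_1,\delta)}\cle \e+|x_P|^2$ for parts (1)--(3), and for part (4) the localization of $\div_{g_\e}\mu_\e$ to the cut-off annuli, where $g_\e$ is a pure and nearly Euclidean pullback and $\div_g\mu=\div_{g_0}\mu_0=0$, so that only gradient-of-cut-off terms at scale $\e^{-1/2}$ survive against the weight $w_\e\sim\sqrt{\e}$. The only cosmetic difference is that you run part (4) through the intrinsic norm equivalence \eqref{normeq:restr} instead of the $C^{k,\alpha}(B_1,\delta)$ pullback norms used in the paper; your remark about the H\"older seminorm is justified because your pointwise derivative bounds hold for every order, hence control the $C^{k+1,0,\nu}$ norm and therefore the $C^{k,\alpha,\nu}$ norm uniformly in $\e$.
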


\begin{proof} We use the definition of the H\o lder norms and study the pullback of $\mu_\e$ along $\Phi\in\C_\e$. There are three cases to consider, based on the type of $\Phi$. We begin with the most interesting, {\sc Type $\G$}. 

Let   
$\Phi=\s_P$, $P=\s(x_P)\in \G_\e$. To simplify our computation, we set 
$$\chi_1(x):=\chi\left(\tfrac{6}{\sqrt{\e}}\left|\H_P(x)\right|-2\right) \ \
\text{and}\ \ \chi_2(x):=(1-\chi)\left(\tfrac{3}{4\sqrt{\e}}\left|\H_P(x)\right|-2\right),$$
so that 
$$\s_P^*\mu_\e=\e\chi_1\cdot\left[\H_P^*\se^*\mu_0\right]+\chi_2\cdot\left[\H_P^*\sM^*\mu\right].$$
While the cut-offs $\chi_1$, $\chi_2$ have a potential to contribute a significant amount to the derivatives of $\mu_\e$ they do not cause any actual trouble: the methods used in the proof  of Proposition \ref{prop-covers}, along with 
$$|d\chi_1|^2+|d\chi_2|^2\neq 0 \Longrightarrow |x_P|\sim \sqrt{\e},$$
show that for each multi-index $\beta$ the derivatives 
$\partial^\beta\chi_1$ and  $\partial^\beta \chi_2$ are bounded on $B_1$ independently of $\e$ and $P$. We estimate the norm of the term involving $\mu_0=K_0$ by using \eqref{asymptK}. As $\H_P$ dilates by a factor of $\frac{|x_P|}{2}$ it follows that   (for each multi-index $\beta$)
\begin{equation}\label{muest1}
\left|\partial^\beta \left(\H_P^*\se^*\mu_0\right)_{ij}\right|\cle |x_P|^2\frac{1}{|x_P|^{|\beta|+2}}|x_P|^{|\beta|}\cle 1
\end{equation}
independently of $P$.
Likewise, if $(\sM^*\mu)_{ij;\beta}$ denotes the combination of the partial derivatives of $(\sM^*\mu)_{ij}$ corresponding to a multi-index $\beta$ then
\begin{equation}\label{muest2}
\left|\partial^\beta \left(\H_P^*\sM^*\mu\right)_{ij}\right|\cle |x_P|^2\Big((\sM^*\mu)_{ij;\beta}\circ \H_P\Big) |x_P|^{|\beta|}\cle |x_P|^{2+|\beta|}\cle |x_P|^2.
\end{equation}
These estimates yield
\begin{equation*}
\left\|\s_P^*\mu_\e\right\|_{C^{k,\alpha}(B_1,\delta)}\cle \e+\widehat{w_{\e, \s_P}}^2 \text{\ \ \ independently of $P\in \G_\e$}.
\end{equation*}

For charts $\Phi$ of {\sc Type $M$} one can easily show that 
$\left\|\Phi^*\mu_\e\right\|_{C^{k,\alpha}(B_1,\delta)}\cle \wef^2\cle \e+\wef^2$;  the case of  {\sc Type $M_0$} is similar: $\left\|\Phi^*\mu_\e\right\|_{C^{k,\alpha}(B_1,\delta)}\cle \e\cle \e+\wef^2$. 

The first claim of our proposition now follows from 
$\e+\wef^2\cle \wef$ and 
$$\wef^{-1}\left\|\Phi^*\mu_\e\right\|_{C^{k,\alpha}(B_1, \delta)}\cle 1
\text{\ \ \ independently of $\Phi\in \C_\e$}.$$ 
The second (respectively  third) claim follows from 
$\e+\wef^2\cle \wef^2$ (respectively, $\e+\wef^2\cle \e$) and 
$\wef^{-2}\left\|\Phi^*\mu_\e\right\|_{C^{k,\alpha}(B_1, \delta)}\cle 1$ (respectively $\left\|\Phi^*\mu_\e\right\|_{C^{k,\alpha}(B_1, \delta)}\cle \e$)
which hold independently of $\Phi\in\C_{\e;U_\e}$ (respectively 
$\Phi\in\C_{\e;V_\e}$). 

A similar method is used to estimate the size of  $\left(\div_{g_\e}\mu_\e\right)^{\sharp}$. Note that in order for $\Phi^*\left(\div_{g_\e}\mu_\e\right)\neq 0$ the chart $\Phi$ needs to be of {\sc Type $\G$}  with   $|x_P|\in\left[\tfrac{\sqrt{\e}}{4},12\sqrt{\e}\right]$. 
It follows from $\grad_{\s_P^*g_\e}=\frac{4}{|x_P|^2}\grad_{g_P}$, $\div_g \mu=0$ and  $\div_{g_0}\mu_0=0$  that 
$$\s_P^* \div_{g_\e}\mu_\e=\e\frac{4}{|x_P|^2}\left(\grad_{g_P}\chi_1\right)\into\left(\H_P^*\se^*\mu_0\right) +\frac{4}{|x_P|^2} \left(\grad_{g_P}\chi_2\right)\into \left(\H_P^*\sM^*\mu\right)
$$
where $V\into \omega$ denotes the $1$-form $\omega(\cdot,V)$. Given that the metrics $g_P$ are uniformly close to the Euclidean metric $\delta$ (see Proposition \ref{prop-covers}), relations \eqref{muest1}, and \eqref{muest2}  imply 
$$\left\|\s_P^*\left((\div_{g_\e}\mu_\e)^{\sharp}\right)\right\|_{C^{k,\alpha}(B_1,\delta)}\cle \frac{1}{\e}\left\|\H_P^*\se^*\mu_0\right\|_{C^{k,\alpha}(B_1,\delta)}+\frac{1}{\e^2}\left\|\H_P^*\sM^*\mu\right\|_{C^{k,\alpha}(B_1,\delta)}\cle \frac{1}{\e}$$
independently of $P$ with $|x_P|\in\left[\tfrac{\sqrt{\e}}{4},12\sqrt{\e}\right]$.  In conclusion,  we have 
$$\widehat{w_{\e, \s_P}}^\nu \left\|\s_P^*\left((\div_{g_\e}\mu_\e)^{\sharp}\right)\right\|_{C^{k,\alpha}(B_1,\delta)}\cle \e^{(\nu/2)-1}$$
and $\|(\div_{g_\e}\mu_\e)^{\sharp}\|_{k,\alpha,\nu}\cle \e^{(\nu/2)-1}.$
\end{proof}

To repair the momentum constraint we perturb $\mu_\e$ so that the resulting trace-free symmetric $2$-tensor is divergence-free. One classic way of doing this \cite{IMP} involves solving the linear PDE 
\begin{equation}\label{VectLapl:eqn}
L_\e X_\e=(\div_{g_\e} \mu_\e)^{\sharp}.
\end{equation}

Observe that \eqref{VectLapl:eqn} has a solution in $C^{k+2,\alpha,\nu}_{1,0}(M_\e)$ for each $k, \alpha, \nu$.  
Indeed $L_\e:H^{k+2}(M_\e)\to H^{k}(M_\e)$ (viewed as an operator between ordinary Sobolev spaces) is self-adjoint and elliptic; in particular, 
$$\mathrm{Im}(L_\e)=\mathrm{Ker}(L_\e)^{\perp}$$
with respect to the  $L^2$-pairing. If $Y\in\mathrm{Ker}(L_\e)$ then integration by parts yields $Y\in\mathrm{Ker}(\D_\e)$ and 
$$\int_{M_\e}g_\e\left((\div_{g_\e} \mu_\e)^{\sharp}, Y\right)=-\int_{M_\e} g_\e\left(\D_\e^* \mu_\e, Y\right)=-\int_{M_\e} g_\e\left(\mu_\e, \D_\e Y\right) =0.$$
It follows that  $(\div_{g_\e} \mu_\e)^{\sharp}\in \mathrm{Im}(L_\e)\subseteq H^{k}(M_\e)$ and that there exists a solution $X_\e\in H^{k+2}(M_\e)$ of \eqref{VectLapl:eqn}. By  the Sobolev Embedding Theorem we see that  $X_\e\in C^{0,0,\nu}_{1,0}(M_\e)$. Elliptic regularity, Theorem \ref{unif-ell}, shows that $X_\e\in C^{k+2,\alpha,\nu}_{1,0}(M_\e)$. 

We use $X_\e$ to make a small perturbation of $\mu_\e$ and repair the momentum constraint. Thus, it is crucial that we have a control on the size of $X_\e$. We achieve this by proving the following uniformity property of the family of operators $L_\e$.  

\begin{prop}\label{VL:MAIN}
Let $\nu\in\left(\tfrac{3}{2}, 2\right)$ and let $\e>0$ be sufficiently small.  We have 
$$\|X\|_{0,0,\nu}\cle \|L_\e X\|_{0,0,\nu+2}$$
independently of smooth vector fields $X$ on $M_\e$. 
\end{prop}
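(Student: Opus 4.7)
The plan is a standard contradiction/blow-up argument, exploiting the uniform elliptic regularity of Theorem \ref{unif-ell} to extract a limiting vector field in one of three natural geometries, and then appealing to triviality of kernels there.

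Suppose the estimate fails. Then there exist sequences $\e_j \to 0$ and smooth vector fields $X_j$ on $M_{\e_j}$ with $\|X_j\|_{0,0,\nu} = 1$ and $\|L_{\e_j} X_j\|_{0,0,\nu+2} \to 0$. Recalling \eqref{Holder:equiv}, for vector fields ($p=1,q=0$) the norm $\|X_j\|_{0,0,\nu}$ is equivalent to $\sup_{M_{\e_j}} w_{\e_j}^{\nu-1}|X_j|_{g_{\e_j}}$, so there is a point $P_j \in M_{\e_j}$ at which the supremum (or within a factor $1/2$ of it) is attained. Passing to a subsequence we may assume that one of the following occurs: (Case $M$) $w_{\e_j}(P_j)$ stays bounded below by a positive constant; (Case $M_0$) $w_{\e_j}(P_j)/\e_j$ stays bounded; or (Case $\G$) $w_{\e_j}(P_j) \to 0$ while $w_{\e_j}(P_j)/\e_j \to \infty$.

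In Case $M$, the points $\iota_{\e_j}^M$-correspond (via $\pM$) to points in a fixed compact subset of $M\smallsetminus\{S\}$, where $g_{\e_j}$ equals $g$. Theorem \ref{unif-ell} gives uniform $C^{2,\alpha}$ bounds on $X_j$ away from the gluing region, so up to a subsequence we obtain a limit $X_\infty$ on $M\smallsetminus\{S\}$ with $\|X_\infty\|_{0,0,\nu} \leq 1$, $L_g X_\infty = 0$, and whose $C^{0,0,\nu}(M;S)$ norm is bounded below by a positive constant (since the maximum persists at a limit point). Since $\nu < 2$ gives at most an $|x|^{1-\nu} = o(|x|^{-1})$ growth rate at $S$, and $\nu > 3/2$ still gives an $L^2$-singularity, a standard removable singularities/integration-by-parts argument (combined with Theorem \ref{ell-M;S}) upgrades $X_\infty$ to a smooth global solution of $L_g X_\infty = 0$ on $M$. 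Testing against $X_\infty$ itself and integrating by parts shows $\D_g X_\infty = 0$; the {\sc CKVF assumption} then forces $X_\infty \equiv 0$, a contradiction. In Case $M_0$, we pull back via $\iota_{\e_j}$ and use the scaling $g_{\e_j} = \e_j^2 g_0$ on the $M_0$-component to obtain a limiting $X_\infty$ on $(M_0,g_0)$, in $C^{0,0,\nu}(M_0;\infty)$ and with $L_{g_0}X_\infty = 0$; since $\nu > 1$ the field decays at infinity, and a conformal-Killing/integration-by-parts argument using the absence of decaying conformal Killing fields on AE data rules this out.

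In Case $\G$ we rescale. Set $\lambda_j := w_{\e_j}(P_j)$, introduce the dilation $\H_{P_j}(x) = x_{P_j} + \tfrac{\lambda_j}{2}x$ used in the definition of the Type $\G$ charts, and define $Y_j$ on a larger and larger ball $B_{R_j} \subseteq \R^3\smallsetminus\{0\}$ (with $R_j\to \infty$ and with an inner radius $\to 0$, since both $\lambda_j \to 0$ and $\e_j/\lambda_j \to 0$) by $Y_j = \lambda_j^{\nu-1}\,\H_{P_j}^*\sigma^* X_j$, scaled so that the pulled-back metrics $g_{P_j}$ converge to $\delta$ uniformly on compact subsets of $\R^3\smallsetminus\{0\}$ (by Proposition \ref{prop-covers}(2), with $\e/\H_{P_j}(\cdot) \to 0$ on compacts). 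The scaling is chosen so that $\|Y_j\|_{0,0,\nu}$ is uniformly controlled and bounded below, and so that $L_{g_{P_j}} Y_j \to 0$ in $C^{k,\alpha,\nu+2}$-seminorms on compacts of $\R^3\smallsetminus\{0\}$. Theorem \ref{unif-ell} produces a subsequential limit $Y_\infty \in C^{2,\alpha,\nu}_{1,0}(\R^3;0,\infty)$, nonzero, with $L_\delta Y_\infty = 0$. With $\nu \in (\tfrac{3}{2},2)$ chosen to avoid the indicial roots of $L_\delta$ on $\R^3\smallsetminus\{0\}$, the kernel of $L_\delta$ in this weighted space is trivial (this is the content of the arguments alluded to in the introduction and Appendix B), which is the final contradiction.

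The principal obstacle, and the reason the range $\nu \in (3/2,2)$ appears, is Case $\G$: one needs that no homogeneous solution of $L_\delta Y = 0$ lies in $C^{0,0,\nu}(\R^3;0,\infty)$, which requires $\nu$ to miss the indicial exponents of the Euclidean vector Laplacian in spherical coordinates. The other two cases reduce to well-known rigidity statements (absence of CKVFs on $(M,g)$ and on the decaying end of $(M_0,g_0)$), while ensuring that the rescaled weak limits live in the correct weighted spaces is mostly bookkeeping, using the norm equivalences established for each of the four weighted function spaces.
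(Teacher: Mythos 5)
Your proposal is correct and follows essentially the same strategy as the paper: argue by contradiction from normalized sequences $\|X_j\|_{0,0,\nu}=1$, $\|L_{\e_j}X_j\|_{0,0,\nu+2}\to 0$, locate the points $P_j$ realizing the weighted supremum, split into the same three regimes according to $w_{\e_j}(P_j)$, extract limits by an exhaustion/diagonal argument using interior elliptic estimates, remove the singularity (at $S$, resp.\ at the origin) by integration by parts using the pointwise bounds $|Y|\cle w^{1-\nu}$, $|\nabla Y|\cle w^{-\nu}$ coming from the weighted regularity theorems, and then contradict a rigidity statement in each model geometry. The one place you diverge is the Euclidean blow-up case: you dispose of the limit $Y_\infty\in C^{0,0,\nu}_{1,0}(\R^3;0,\infty)$, $L_\delta Y_\infty=0$, by citing the triviality of the kernel of $L_\delta$ in that weighted space (the indicial-root/spherical-harmonics computation), which is exactly the alternative the paper relegates to Appendix B; the main text instead integrates by parts over large spheres, using $\nu>\tfrac32$ to kill the boundary term $\rho^{3-2\nu}$, to conclude $Y_\infty$ is a decaying Euclidean conformal Killing field, which is impossible. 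Both routes are valid, and the appendix route is in fact sharper for this case (it works for all $\nu\in(1,2)$). One small misstatement in your closing remark: the threshold $\nu>\tfrac32$ is \emph{not} forced by Case $\G$ if you use the explicit-kernel argument; it is used in the integration-by-parts-at-infinity step, which your Case $M_0$ (and the paper's treatment of the Euclidean case) relies on to show the limit is conformal Killing — so the constraint still enters your proof, just in Case $M_0$ rather than Case $\G$. Likewise, in Case $M$ the relevant mechanism is the vanishing of the boundary terms $r^{3-\nu}$, $r^{2-\nu}$ as $r\to 0$ (only $\nu<2$ is needed there), not square-integrability of the singularity.
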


\begin{proof}
We assume opposite: that there exist $\e_j\downarrow 0$ ($j\in\N$) and vector fields $X_j$ on $M_{\e_j}$ such that 
\begin{equation}\label{VL:mainassumpt}
\|X_j\|_{0,0,\nu}=1 \text{\ \ and\ \ } \|L_{\e_j} X_j\|_{0,0,\nu+2}\to 0.
\end{equation}
Equivalently, the first property of $X_j$ can be written as 
$$\max_{M_{\e_j}} w_{\e_j}^{-1+\nu} \left|X_j\right|_{g_{\e_j}}=1.$$
Let $P_j\in M_{\e_j}$ be the points at which these maxima are reached. Consider the sequence $\left( w_{\e_j}(P_j)\right)_{j\in\N}$. One of the following holds:
\begin{description}
\item[{\sc Case $M\smallsetminus\{S\}$}] There exists a subsequence of $(P_j)_{j\in\N}$, which we may without loss of generality assume is $(P_j)_{j\in\N}$ itself, and a number $c_M>0$ such that 
$$w_{\e_j}(P_j)\ge c_M \text{\ \ for all\ \ } j\in \N.$$
\item[{\sc Case $\R^3\smallsetminus\{0\}$}] There exists a subsequence of $(P_j)_{j\in\N}$, which we may without loss of generality assume is $(P_j)_{j\in\N}$ itself, such that 
$$w_{\e_j}(P_j)\to 0, \ \ \ \frac{\e_j}{w_{\e_j}(P_j)}\to 0.$$
\item[{\sc Case $M_0$}] There exists a subsequence of $(P_j)_{j\in\N}$, which we may without loss of generality assume is $(P_j)_{j\in\N}$ itself, and a number $c_{M_0}>0$ such that 
$$w_{\e_j}(P_j)\le c_{M_0}\e_j \text{\ \ for all\ \ }j\in\N.$$
\end{description}
In each of the three cases we use the sequence $(X_j)_{j\in\N}$ to construct a non-trivial vector field on the indicated manifold. By construction the vector field is in a particular weighted H\o lder space and is in this kernel of a vector Laplacian. We obtain a contradiction by arguing that there is no such vector field. The reasoning is similar in each of the three cases. In the interest of brevity we present only one of the cases in full detail, namely {\sc Case $M\smallsetminus\{S\}$}. 

\underline{\sc Obtaining the contradiction in Case $M\smallsetminus\{S\}$.} Our first step is the construction of a vector field on $M\smallsetminus\{S\}$ with peculiar properties; we refer to  this step as the \emph{Exhaustion Argument}.

Let $\widetilde{D}_1\subset D_1\subset \widetilde{D}_2\subset D_2\subset.....\subset M\smallsetminus\{S\}$ be a sequence of compact subsets of $M\smallsetminus \{S\}$ such that $$\bigcup_{n=1}^\infty \mathrm{Int}(D_n)=M\smallsetminus \{S\}.$$
Without loss of generality we may assume that 
$w_M\big|_{\widetilde{D}_1}\ge c_M$ and that the restriction of the quotient map $\pMi$ to $D_1$,
$$\pMi: D_1\to  M_{\e_j},$$
is an embedding with $\pMi^* g_{\e_j}=g$ for all $j\in \N$. 
The restrictions of vector fields $\X_j:=\pMi^* X_j$ to $D_1$ satisfy
\begin{equation}\label{VL:nontriv}
\sup_{\widetilde{D}_1}\left| \X_j\right|_g\ge c,
\ \ \ \left| \X_j\right|_g\le w_M^{1-\nu},
\ \ \ \left| L_g \X_j\right|_g\le c_j \cdot w_M^{-1-\nu} \text{\ \ and\ \ } 
\lim_{j\to \infty} c_j=0,
\end{equation}
where $c>0$ is a constant independent of $j\in \N$, and where $c_j:=\|L_{g_{\e_j}} X_j\|_{0,0,\nu+2}$.

Now consider the interior elliptic estimate 
$$\|\X_j\|_{H^2(\widetilde{D}_1,g)}\le C_1\left(\|L_g \X_j\|_{L^2(D_1,g)}+\|\X_j\|_{L^2(D_1,g)}\right)$$
in Sobolev spaces with respect to the metric $g$. 
It follows from \eqref{VL:nontriv} and the uniform boundedness of $w_M=w_{\e_j}\circ \pMi$  on $D_1$ away from $0$ that  the sequences $(\X_j)_{j\in \N}$ and $(L_g \X_j)_{j\in \N}$ are bounded in $L^2(D_1,g)$. Consequently, $(\X_j)_{j\in\N}$ is bounded as a sequence in $H^2(\widetilde{D}_1,g)$. From the  Rellich Lemma and the Sobolev Embedding Theorem we see that there is a subsequence of $(\X_j)_{j\in \N}$ which is convergent in $C^0(\widetilde{D}_1,g)$.
We extract and relabel the subsequences $(\e_j)_{j\in \N}$, $(\X_j)_{j\in \N}$  to get 
$$ \X_j \to Y_1 \text{ \ \ in\ \ } C^0(\widetilde{D}_1,g).$$
Note that $Y_1\neq 0$ and $|Y_1|_g \le w_M^{1-\nu}$ on $\widetilde{D}_1$ due to \eqref{VL:nontriv}. 

We now repeat the process: eliminating finitely many terms of $(\e_j)_{j\in \N}$ and $(\X_j)_{j\in \N}$  we ensure that $\pMi^* g_{\e_j}=g$ on $D_2$.  The interior elliptic estimate
$$\|\X_{j}\|_{H^2(\widetilde{D}_2,g)}\le C_2\left(\|L_g \X_{j}\|_{L^2(D_2,g)}+\|\X_j\|_{L^2(D_2,g)}\right)$$
implies the existence of a subsequence of $(X_j)_{j\in\N}$ whose pullback  is convergent in $C^0(\widetilde{D}_2, g)$. As above, we extract and relabel this subsequence so to have $$\X_j\to Y_2\text{\ \ in\ \ }C^0(\widetilde{D}_2,g).$$ 
Since $Y_2$ is a subsequential limit of the sequence which defines $Y_1$ we have $Y_2\big|_{\widetilde{D}_1}=Y_1$. We also note that $|Y_2|_g\le w_M^{1-\nu}$ and that  \eqref{VL:nontriv} holds on $D_2$.
 
The process described above gives rise to an iterative construction of vector fields $Y_n\in C^0(\widetilde{D}_n,g)$, $n\in \N$ such that 
$$Y_n\big|_{\widetilde{D}_{n-1}}=Y_{n-1}, \ \ \ |Y_n|_g\le w_M^{1-\nu}.$$
Define the vector field $Y$ on $M\smallsetminus\{S\}$ by 
$$Y\big|_{\widetilde{D}_n}=Y_n.$$
We have $Y\in C^{0,0,\nu}_{1,0}(M;S)$, $\|Y\|_{0,0,\nu}\le 1$ and $Y\neq 0$. 

The punch-line of the Exhaustion Argument is that 
$L_g Y=0$ \emph{on $M\smallsetminus\{S\}$}. By elliptic regularity it suffices to show that 
$L_g Y=0$ \emph{weakly}. To that end, let $\xi$ be a compactly supported vector field on $M\smallsetminus\{S\}$. Let $m\in \N$ be such that 
$\mathrm{supp}(\xi)\subseteq \widetilde{D}_m$. Since $Y\big|_{\widetilde{D}_m}=Y_m$ and since $L_g$ is formally self adjoint we have 
$$\begin{aligned}
\left|\int_{M\smallsetminus\{S\}} g\left(Y, L_g\xi\right)\dvol_g\right|=&
\left|\int_{\widetilde{D}_m} g\left(Y_m, L_g\xi\right)\dvol_g\right|\\
=&\lim_{j\to \infty} \left|\int_{\widetilde{D}_m} g\left(\X_j, L_g\xi\right)\dvol_g\right|
=
\lim_{j\to \infty} \left|\int_{\widetilde{D}_m} g\left(L_g\X_j, \xi\right)\dvol_g\right|\\
\le &\|\xi\|_{C^0(\widetilde{D}_m, g)}\, \mathrm{vol}_g(\widetilde{D}_m)\cdot\lim_{j\to \infty} \|L_g \X_j\|_{C^0(\widetilde{D}_m, g)}.
\end{aligned}$$
Since  $w_M^{-1-\nu}$ is bounded on $\widetilde{D}_m$ there is a constant $c(\widetilde{D}_m)$ such that  $$\|L_g \X_j\|_{C^0(\widetilde{D}_m, g)}\le c_j\, c(\widetilde{D}_m) \text{\ \ for\ all\ \ } j\in \N.$$ In particular, we have  
$\displaystyle{\lim_{j\to \infty}} \|L_g \X_j\|_{C^0(\widetilde{D}_m, g)}=0$
and  $L_g Y=0$ on $M\smallsetminus\{S\}$. 

It is now important to notice that Theorem \ref{ell-M;S} implies $Y\in C^{2,0, \nu}_{1,0}(M;S)$. Consequently, there is a constant $\tilde{c}$ such that 
\begin{equation}\label{VL:est2}
\left|\nabla Y\right|_g\le \tilde{c}\cdot w_M^{-\nu}.
\end{equation}
We now show that the existence of the vector field $Y$ described above is a contradiction.

We start by showing that $L_g Y=0$ \emph{weakly on $M$}. Let $\xi$ be a vector field on $M$ and let $B^M_r$ be a geodesic ball of (small) radius $r$ centered at $S$. To understand $\int_M g(Y, L_g\xi)\dvol_g$ we estimate  $\int_{B^M_r} g(Y,L_g\xi)\dvol_g$ and $\int_{M\smallsetminus B^M_r} g(Y,L_g\xi)\dvol_g$ individually.

For the first integral we take the advantage of $|Y|_g\le w_M^{1-\nu}$ to see that for some constant $c_1(\xi)$ (independent of $r$) we have
$$\left|\int_{B^M_r} g(Y,L_g\xi)\dvol_g\right|\le c_1(\xi)\cdot r^{4-\nu}.$$
On the other hand, integration by parts and the fact that $L_g Y=0$ on $M\smallsetminus\{S\}$ imply
$$\left|\int_{M\smallsetminus B^M_r} g(Y, L_g \xi)\dvol_g\right|\le \int_{\partial B^M_r} |\D_g\xi\left(Y, \mathbf{n}\right)+\D_g Y\left(\xi, \mathbf{n}\right)|\dvol_g,$$
where $\mathbf{n}$ denotes a unit normal to the geodesic sphere $\partial B^M_r$. Point-wise estimates for $|Y|_g$ and $|\nabla Y|_g$ (see \eqref{VL:est2}) yield
\begin{gather*}
\int_{\partial B^M_r} |\D_g \xi (Y, \mathbf{n})|\dvol_g\le \int_{\partial B^M_r} c_2(\xi) r^{1-\nu}\dvol_g\le c_3(\xi) r^{3-\nu},\\
\int_{\partial B^M_r} |\D_g Y (\xi, \mathbf{n})|\dvol_g\le c_4(\xi) r^{2-\nu}
\end{gather*}
for some constants $c_2(\xi), c_3(\xi), c_4(\xi)$ independent of $r$. 
Combining all of the above we obtain
$$\left|\int_M g(Y, L_g \xi)\dvol_g\right|\le c_5(\xi) r^{2-\nu}$$
for some constant $c_5(\xi)$ independent of $r$. 
Since $r$ is arbitrary we may take the limit as $r\to 0$; as a result we obtain
$$\int_M g(Y, L_g \xi)\dvol_g=0.$$
In other words, we have that $L_g Y=0$ weakly on $M$. By elliptic regularity  $L_g Y=0$ strongly and $Y$ is smooth on all of $M$. 

Integrating by parts we further see that 
$$0=\int_M g(L_g Y, Y)\dvol_g=\int_M \left|\D_g Y\right|^2_g\dvol_g,$$
i.e.~that $\D_g Y=0$ on $M$. This is a contradiction since $Y\neq 0$ and there are no non-trivial conformal Killing vector fields on $M$. 

\underline{\sc Obtaining the contradiction in Case $\R^3\smallsetminus\{0\}$.} In this case we have $P_j\in \G_{\e_j}$ for all but finitely many $j\in\N$. To be able to employ the Exhaustion Argument we need to do some re-scaling. More precisely, we blow up the gluing region $\G_{\e_j}$ by a factor of $w_j:=w_{\e_j}(P_j)$ and re-scale the metrics and vector fields correspondingly.  

Consider the dilation 
$\H_j:x\mapsto w_j\cdot x$ of $\R^3$ and define
$$\Omega_j:=(\s\circ \H_j)^{-1}\left(\G_{\e_j}\right)=\left\{x\in \R^3\ \Big|\ 4C\frac{\e_j}{w_j}< |x|< \frac{C^{-1}}{4w_j}\right\}.$$
This choice  is motivated by the fact that the points 
$Q_j\in \Omega_j$ with $P_j=\s\circ \H_j(Q_j)$ satisfy $|Q_j|=1$.
Since $w_j\to 0$ and $\frac{\e_j}{w_j}\to 0$ as $j\to \infty$ each compact subset  $D\subseteq \R^3\smallsetminus\{0\}$ is contained in all but finitely many $\Omega_j$. 

Next consider the metrics 
$$g^\Omega_j:=\tfrac{1}{w_j^2}\cdot (\s\circ \H_j)^* g_{\e_j}$$
on $\Omega_j$.
The methods used in the proof of Proposition \ref{prop-covers} (see  \eqref{unif-eucl} for details) show that for each compact subset $D\subseteq \R^3\smallsetminus\{0\}$ there exists a constant $c(D)$ such that 
$$|g^\Omega_j-\delta|\le c(D)\left(\frac{\e_j}{w_j}+w_j^2\right)$$
for all but finitely many $j$.  It follows from $w_j\to 0$ and $\frac{\e_j}{w_j}\to 0$ that $g^\Omega_j$ converges to the Euclidean metric $\delta$  uniformly on $D$ as $j\to \infty$. A similar line of reasoning shows that $g^\Omega_j$ converges to $\delta$  in the $C^k(D,\delta)$-norm for any compact subset $D\subseteq \R^3\smallsetminus\{0\}$.

Define $\X_j:= w_j^\nu\left(\s\circ \H_j\right)^* X_j$. We claim there exists  a sequence $(c_j)_{j\in \N}$ which converges to zero  and a constant $c>0$ with the following properties.
\begin{itemize}
\item The supremum over the unit sphere $S^2\subseteq \R^3\smallsetminus\{0\}$ satisfies $\sup_{S^2}|\X_j|_\delta\ge c$ for all $j\in \N$.
\medbreak
\item For each compact subset $D\subseteq \R^3\smallsetminus\{0\}$ there is $j_0\in \N$ such that for all $j\ge j_0$ and all $Q\in D$ we have $|\X_j (Q)|_\delta\le \tfrac{1}{c}|Q|^{1-\nu}$. 
\medbreak
\item For each compact subset $D\subseteq \R^3\smallsetminus\{0\}$ there exist $j_0\in \N$ and a constant $c(D)$ such that for all $j\ge j_0$  we have
 $|L_{g_j^\Omega}\X_j|_{g_j^\Omega}\le c_j\cdot c(D)$ on $D$.
\end{itemize}
These properties are consequences of the normalization $Q_j\in S^2$ for all $j\in \N$, the scaling identities $$|\X_j|_{g_j^\Omega}=w_j^{\nu-1}|X_j|_{g_{\e_j}}\circ \H_j,\ \ \ w_{\e_j}\circ\s\circ \H_j(Q)=w_j|Q|,\ \ \ L_{g_j^\Omega}=w_j^2 L_{(\s\circ \H_j)^*g_{\e_j}}$$ and the convergence $g_j^\Omega\to \delta$ on compact subsets. To illustrate the proofs of these properties we verify the last inequality.  

Let $c_j:=\|L_{\e_j}X_j\|_{0,0,\nu+2}$;
by assumption $c_j\to 0$ as $j\to \infty$. Further, let $D\subseteq \R^3\smallsetminus\{0\}$ be a compact set and let $Q\in D$. Since $g_j^\Omega\to \delta$ on $D$ there is $j_0\in \N$ such that 
$$\left|L_{g_j^\Omega}\X_j(Q)\right|_{g_j^\Omega}=
 w_j^{1+\nu}\left|L_{\e_j} X_j(Q)\right|_{g_{\e_j}}\le 
w_j^{1+\nu}\cdot \frac{\|L_{\e_j}X_j\|_{0,0,\nu+2}}{(\s\circ\H_j)^*w_{\e_j}^{1+\nu}}(Q)
= c_j |Q|^{-1-\nu}$$
for all $j\ge j_0$. The constant $c(D)$ can be chosen to be $\left(\displaystyle{\inf_{Q\in D}}|Q|\right)^{-1-\nu}$. 

We now apply the Exhaustion Argument to the vector fields $\X_j$ and obtain a vector field $Y\neq 0$ on $\R^3\smallsetminus\{0\}$ such that 
\begin{equation}\label{VLdecay}
Y\in C^{0,0,\nu}_{1,0}(\R^3;0,\infty) \text{\ \ and\ \ }  L_\delta Y=0.
\end{equation}
The Exhaustion Argument here is essentially identical to the one presented in {\sc Case $M\smallsetminus\{S\}$}, except in the integration by parts step which proves that $L_\delta Y=0$ weakly on $\R^3\smallsetminus\{0\}$. The key difference in the step is that, for a given test vector field $\xi$ and compact set $\widetilde{D}_m\supseteq \mathrm{supp}(\xi)$, we use $\|g_j^\Omega -\delta\|_{C^2(\widetilde{D}_m,\delta)}\to 0$ and its consequence
\begin{equation}\label{VL:passtoEucl}
\lim_{j\to \infty}\left\|L_{g_j^\Omega}\xi - L_\delta \xi\right\|_{C^0(\widetilde{D}_m,\delta)}=0.
\end{equation}
For clarity we do the integration explicitly here:
$$\begin{aligned}
\left|\int_{\R^3\smallsetminus\{0\}} \delta\left(Y, L_\delta\xi\right)\dvol_\delta\right|=&
\left|\int_{\widetilde{D}_m} \delta\left(Y_m, L_\delta\xi\right)\dvol_\delta\right|\\
=&\lim_{j\to \infty} \left|\int_{\widetilde{D}_m} g_j^\Omega\left(\X_j, L_{g_j^\Omega}\xi\right)\dvol_{g_j^\Omega}\right|
=
\lim_{j\to \infty} \left|\int_{\widetilde{D}_m} g_j^\Omega\left(L_{g_j^\Omega}\X_j, \xi\right)\dvol_{g_j^\Omega}\right|\\
\le &\lim_{j\to \infty} \left(\|\xi\|_{C^0(\widetilde{D}_m, g_j^\Omega)}\,\|L_{g_j^\Omega} \X_j\|_{C^0(\widetilde{D}_m, g_j^\Omega)}\,\mathrm{vol}_{g_j^\Omega}(\widetilde{D}_m)\right)\\
\le &\|\xi\|_{C^0(\widetilde{D}_m, \delta)}\, \mathrm{vol}_\delta(\widetilde{D}_m)\cdot\lim_{j\to \infty} \left(c_j c(\widetilde{D}_m)\right)=0.
\end{aligned}$$

The next step is to show that the existence of $Y$ described above is a contradiction. We have discovered two ways to do this.  One way is to use spherical coordinates and spherical harmonics to \emph{explicitly} compute the kernel of $L_\delta$; this approach is addressed in the Appendix. The approach we take here is to show that $Y$ is a conformal Killing vector field on $(\R^3,\delta)$ which decays at $\infty$; the explicit knowledge of all the Euclidean conformal Killing vector fields shows that the existence of such a $Y$ is impossible. 

We first observe that $L_\delta Y=0$ weakly (and hence strongly) \emph{on $\R^3$}.  Indeed, Theorem \ref{ell-R3} shows that 
$Y\in C^{2,0,\nu}_{1,0}(\R^3;0,\infty)$ and, in particular,  
\begin{equation}\label{VL:R3est}
|Y|\le c_1\,r^{1-\nu},\ \ \ |\nabla Y|\le c_1\,r^{-\nu}
\end{equation} 
for some constant $c_1>0$. 
These estimates ensure that the integration-by-parts argument of {\sc Case $M\smallsetminus\{S\}$} carries over with no changes. 
Next let $B_\rho$ (resp.~ $S^2_\rho$) denote the Euclidean ball (resp.~ sphere) of radius $\rho$ centered at the origin. 
Consider the integration by parts formula
$$0=\int_{B_\rho}\delta\left(L_\delta Y, Y\right) \dvol_\delta= \int_{S^2_\rho}\D_\delta Y\left(Y, \mathbf{n}\right)\dvol_\delta
+\int_{B_\rho}\left|\D_\delta Y\right|^2\dvol_\delta$$
in which $\D_\delta$ denotes the Euclidean conformal Killing operator and in which $\mathbf{n}$ denotes the appropriately oriented unit normal to  $S^2_\rho$. 
It follows from \eqref{VL:R3est} that 
$$\left|\int_{S^2_\rho}\D_\delta Y\left(Y, \mathbf{n}\right)\dvol_\delta\right|\le c_2\rho^{3-2\nu}$$
for some constant $c_2$. By assumption $\nu>\tfrac{3}{2}$ and so 
$$\int_{\R^3}\left|\D_\delta Y\right|^2\dvol_\delta=-\lim_{\rho\to +\infty} \int_{S^2_\rho}\D_\delta Y\left(Y, \mathbf{n}\right)\dvol_\delta=0.$$
We conclude that  $\D_\delta Y=0$ on $\R^3$. In other words, we have that $Y\in C^{2,0,\nu}_{1,0}(\R^3;0,\infty)$ is a non-zero conformal Killing vector field on $\R^3$ which decays at $\infty$. To see that this is a contradiction one can appeal to a generalization of a theorem of Christodoulou \cite[Prop.~13]{IMP} or simply recall that  the space of conformal Killing vector fields on $\R^3$ is spanned by 
coordinate (translation) vector fields $\mathbf{e}_i$, rotation vector fields $x^i\mathbf{e}_j-x^j\mathbf{e}_i$, dilation vector field $\sum x^i\mathbf{e}_i$, and the special vector fields 
$2x^j\left(\sum x^i\mathbf{e}_i\right)-\left(\sum (x^i)^2\right)\mathbf{e}_j$, none of which decay at $\infty$. This completes the proof in {\sc Case $\R^3\smallsetminus\{0\}$}.

\underline{\sc Obtaining the contradiction in Case $M_0$.} To apply the Exhaustion Argument we need to re-scale $M_0$ ``back to its original size". We consider the regions 
$$\Omega_j:=M_0\smallsetminus\Oo_{\left(2\sqrt{\e_j}\right)^{-1}},$$
which  $\pei$ map diffeomorphically into $M_{\e_j}$. Since $\e_j\to 0$ each compact subset $D\subseteq M_0$ is contained in all but finitely many $\Omega_j$. If necessary, we eliminate finitely many terms of $(\e_j)_{j\in\N}$ so that $P_j\in \pei(\Omega_j)$  and $\pei^*g_{\e_j}=\e_j^2 g_0$ for all $j$. Let $Q_j\in M_0$ be such that $P_j=\pei(Q_j)$. Finally, consider the vector fields 
$$\mathbf{X}_j:=\e_j^\nu \left(\pei\right)^*X_j \text{\ \ on\ \ } \Omega_j.$$
One easily verifies the following properties of the vector fields $\X_j$:
\begin{enumerate}
\item There is a constant $c_1$ independent of $j$ such that 
$\left|\X_j\right|_{g_0}\le c_1$ point-wise on $\Omega_j$,
\item $\left|\X_j\right|_{g_0}\le w_0^{1-\nu}$ point-wise on the ``asymptotic" region $\Omega_j\cap \Oo_{2C}$,
\item $\left|\X_j(Q_j)\right|_{g_0}\ge c_{M_0}^{1-\nu}$ and 
\item There is a constant $c_2$ independent of $j$ with $\left|L_{g_0}\X_j\right|_{g_0}\le c_2\left\|L_{\e_j}X_j\right\|_{0,0,\nu+2}$ point-wise on $\Omega_j$. 
\end{enumerate}
Applying the Exhaustion Argument to the vector fields $(\X_j)_{j\in \N}$ we obtain a \emph{non-zero} vector field $Y\in C^{0,0,\nu}_{1,0}(M_0;\infty)$ for which $L_{g_0}Y=0$.  
It follows from Theorem \ref{ell-M;infty} that $Y\in C^{2,0,\nu}_{1,0}(M_0;\infty)$ and 
\begin{equation}\label{VL:est3}
\left|Y\right|_{g_0}\le \tilde{c}\, w_0^{1-\nu},\ \ \left|\nabla Y\right|_{g_0}\le \tilde{c}\, w_0^{-\nu}
\end{equation}
for some constant $\tilde{c}$. With these estimates in hand we apply the integration-by-parts argument used in {\sc Case $\R^3\smallsetminus\{0\}$} to show that $Y$ is a conformal Killing vector field on $(M_0,g_0)$.  In addition, we see from \eqref{VL:est3} that $Y\neq 0$ must ``decay at infinity". This situation is impossible according to a generalization of a theorem of Christodoulou (see \cite[Prop.~13]{IMP}). Thus we have reached our final contradiction!
\end{proof}
The following important consequence of Proposition \ref{VL:MAIN} is immediate from Theorem \ref{unif-ell}.
\begin{thm}
Let  $\alpha\in(0,1)$ and $\nu\in\left(\tfrac{3}{2}, 2\right)$. If $\e>0$ is sufficiently small, then 
$$\|X\|_{k+2,\alpha,\nu}\cle \|L_\e X\|_{k,\alpha,\nu+2}$$
independently of smooth vector fields $X$ on $M_\e$. 
\end{thm}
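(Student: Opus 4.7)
The plan is short: combine the a priori estimate of Proposition \ref{VL:MAIN} with the uniform elliptic regularity of Theorem \ref{unif-ell}, using the former to absorb the zero-order term that the latter leaves behind.

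First I would apply Theorem \ref{unif-ell} to the smooth vector field $X$ (the hypothesis $L_\e X\in C^{k,\alpha,\nu+2}(M_\e)$ is automatic since $X$ is smooth and $M_\e$ is compact). This yields the weighted Schauder-type bound
\[
\|X\|_{k+2,\alpha,\nu}\cle \|L_\e X\|_{k,\alpha,\nu+2}+\|X\|_{0,0,\nu},
\]
with the implicit constant uniform in $\e$. The presence of the lower-order term $\|X\|_{0,0,\nu}$ is the only obstacle: on its own this is merely an elliptic regularity statement, not a coercive estimate.

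To remove the $\|X\|_{0,0,\nu}$ term I would invoke Proposition \ref{VL:MAIN}, valid precisely because $\nu\in(\tfrac{3}{2},2)$ and $\e$ is sufficiently small; it provides
\[
\|X\|_{0,0,\nu}\cle \|L_\e X\|_{0,0,\nu+2}.
\]
Finally, the trivial monotonicity of the $C^{k,\alpha,\nu+2}$-norms in $(k,\alpha)$ gives $\|L_\e X\|_{0,0,\nu+2}\le \|L_\e X\|_{k,\alpha,\nu+2}$, and substituting this chain into the first display produces
\[
\|X\|_{k+2,\alpha,\nu}\cle \|L_\e X\|_{k,\alpha,\nu+2},
\]
which is the claim.

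There is no real difficulty here; all the work has been done in Proposition \ref{VL:MAIN}, which is where the hypotheses $\nu\in(\tfrac{3}{2},2)$, the {\sc CKVF} assumption, and the smallness of $\e$ are actually consumed. The only thing to verify in passing is that the two ``sufficiently small $\e$'' thresholds (the one from Proposition \ref{VL:MAIN} and the implicit uniformity in Theorem \ref{unif-ell}) can be taken simultaneously; this is immediate since the latter holds for all $\e$ within the range considered in Section \ref{glue:sec}.
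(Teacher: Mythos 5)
Your proposal is correct and coincides with the paper's own (essentially one-line) argument: the theorem is stated there as an immediate consequence of Proposition \ref{VL:MAIN} combined with Theorem \ref{unif-ell}, exactly the absorption of the $\|X\|_{0,0,\nu}$ term that you carry out. Nothing further is needed.
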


Recall that our strategy for repairing the momentum constraint involves solving the equation \eqref{VectLapl:eqn}, for which there always exists a solution $X_\e$. Proposition \ref{div:est}  and the previous theorem now provide a weighted H\"older estimate for $X_\e$. More precisely, if  $\nu\in \left(\tfrac{3}{2}, 2\right)$ and $\e$ is small then 
\begin{equation}\label{xe:est}
\|X_\e\|_{k+2,\alpha,\nu} \cle \e^{\nu/2}.
\end{equation}

Consider the perturbation $\me$ of $\mu_\e$ defined by 
$$\me :=\mu_\e+\D_{\e} X_\e, \mathrm{\ \ \ \ \ where\ \ \ \ } L_\e X_\e=(\div_{g_\e}\mu_\e)^{\sharp}.$$
Since $\D_\e$ maps into the subspace of symmetric and trace-free $2$-tensors, the tensor $\me$ is itself symmetric and trace-free with respect to $g_\e$. Furthermore, the choice of $X_\e$ ensures that $\me$ is also divergence-free:
$$\left(\div_{g_\e} \me\right)^{\sharp}=\left(\div_{g_\e} \mu_\e\right)^{\sharp}-\D_\e^*\D_\e X_\e=L_\e X_\e-L_\e X_\e=0.$$
Thus, the pair of tensors $(g_\e, \me+\tfrac{\t}{3}g_\e)$ satisfies the momentum constraint. The following proposition compares $\me$ with $\mu_\e$ and shows that $\me$ is indeed a small perturbation of $\mu_\e$.
\begin{prop} \label{corr:est}
If  $\nu\in\left(\tfrac{3}{2}, 2\right)$ then:
\begin{enumerate}
\item $\left\|\me-\mu_\e\right\|_{k,\alpha,\nu-2}\cle \e^{\nu/2}$.
\item $\left\|\left|\me\right|^2_{g_\e}-\left|\mu_\e\right|^2_{g_\e}\right\|_{k,\alpha,\nu+1}\cle \e^{\nu/2}$.
\item $\left\|\left|\me\right|^2_{g_\e}-\left|\mu_\e\right|^2_{g_\e}\right\|_{k,\alpha,\nu;U_\e}\cle \e^{\nu/2}$, where $U_\e:=w_\e^{-1}\left(\tfrac{\sqrt{\e}}{4}, +\infty\right)\subseteq M_\e$. 
\end{enumerate}
\end{prop}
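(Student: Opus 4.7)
The plan is to exploit the decomposition $\me - \mu_\e = \D_{\e} X_\e$ together with the bound $\|X_\e\|_{k+2,\alpha,\nu}\cle \e^{\nu/2}$ from \eqref{xe:est}. Part (1) is then immediate from Proposition \ref{De}: since $\D_\e:C^{k+1,\alpha,\nu}_{1,0}(M_\e)\to C^{k,\alpha,\nu-2}_{0,2}(M_\e)$ has uniformly bounded norm,
$$\|\me - \mu_\e\|_{k,\alpha,\nu-2} = \|\D_\e X_\e\|_{k,\alpha,\nu-2} \cle \|X_\e\|_{k+1,\alpha,\nu} \le \|X_\e\|_{k+2,\alpha,\nu} \cle \e^{\nu/2}.$$

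For (2), I would expand the polarization identity
$$|\me|^2_{g_\e} - |\mu_\e|^2_{g_\e} = 2\, g_\e(\mu_\e,\D_\e X_\e) + |\D_\e X_\e|^2_{g_\e}$$
and bound the two terms using Proposition \ref{Holder:props}. To estimate the cross term I would raise both indices of $\D_\e X_\e$ (each raising adds $2$ to the weight by (4)), form the tensor product with $\mu_\e$ (adding weights by (2)), and contract twice (weight-preserving by (3)). This yields
$$\|g_\e(\mu_\e,\D_\e X_\e)\|_{k,\alpha,\,(-1)+(\nu-2)+4} \cle \|\mu_\e\|_{k,\alpha,-1}\,\|\D_\e X_\e\|_{k,\alpha,\nu-2} \cle 1\cdot\e^{\nu/2},$$
which is precisely weight $\nu+1$. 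The quadratic term is bounded the same way by $\||\D_\e X_\e|^2_{g_\e}\|_{k,\alpha,2\nu}\cle \e^{\nu}$; to get to the required weight $\nu+1 < 2\nu$ I would invoke Proposition \ref{Holder:props}(1), which costs a factor of $\e^{-(\nu-1)}$ but yields $\e^{\nu-(\nu-1)}=\e$. Because $\nu<2$ the inequality $\e\le \e^{\nu/2}$ holds for small $\e$, so both terms respect the claimed bound.

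For (3), the same polarization decomposition is used, but now on $U_\e$ the sharper estimate $\|\mu_\e\|_{k,\alpha,-2;U_\e}\cle 1$ from Proposition \ref{div:est}(2) is available and $\inf_{U_\e}w_\e\sim \sqrt{\e}$. Repeating the cross-term computation on $U_\e$ gives weight $(-2)+(\nu-2)+4=\nu$ directly, with bound $\e^{\nu/2}$. For the quadratic term, $\||\D_\e X_\e|^2_{g_\e}\|_{k,\alpha,2\nu;U_\e}\cle \e^{\nu}$, and converting down to weight $\nu$ via Proposition \ref{Holder:props}(1) now costs only $(\inf_{U_\e}w_\e)^{-\nu}\sim \e^{-\nu/2}$, yielding a net bound of $\e^{\nu/2}$.

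The main bookkeeping obstacle is the careful weight accounting for the $g_\e$-inner products: each application of the musical isomorphism inside $|\cdot|^2_{g_\e}$ shifts the weight by $+2$ via Proposition \ref{Holder:props}(4), so the natural weight of $|T|^2_{g_\e}$ for a $(0,2)$-tensor $T\in C^{k,\alpha,\mu}$ is $2\mu+4$ rather than $2\mu$. The distinction between (2) and (3) is then dictated precisely by whether the available bound for $\mu_\e$ is in weight $-1$ (global) or weight $-2$ (on $U_\e$), with the hypothesis $\nu\in(\tfrac{3}{2},2)$ ensuring that the weight interpolation in Proposition \ref{Holder:props}(1) is applied in the favorable direction.
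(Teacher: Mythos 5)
Your proposal is correct and takes essentially the same route as the paper: part (1) is the same appeal to \eqref{xe:est} and Proposition \ref{De}, and parts (2)--(3) rest on the same decomposition $\me-\mu_\e=\D_\e X_\e$ combined with Propositions \ref{div:est} and \ref{Holder:props}, the paper simply writing the difference as $g_\e\left(\D_\e X_\e,\ \mu_\e+\D_\e X_\e\right)$ instead of your polarization identity. The only cosmetic deviation is where the weight interpolation of Proposition \ref{Holder:props}(1) is invoked for the quadratic term (you lower $\left\||\D_\e X_\e|^2_{g_\e}\right\|$ from weight $2\nu$, the paper lowers $\left\|\D_\e X_\e\right\|_{k,\alpha,-1}$ via $\e^{1-\nu}\left\|\D_\e X_\e\right\|_{k,\alpha,\nu-2}$), which produces the same bounds.
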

\begin{proof} The claim regarding the size of $\me-\mu_\e=\D_\e X_\e$ follows immediately from \eqref{xe:est} and Proposition \ref{De}. The second estimate is a consequence of Proposition \ref{Holder:props}, the earlier estimate for $\D_\e X_\e$, and the first claim of Proposition \ref{div:est}. Indeed 
\begin{equation*}
\begin{aligned}
\left\|\left|\me\right|^2_{g_\e}-\left|\mu_\e\right|^2_{g_\e}\right\|_{k,\alpha,\nu+1}\cle&\left\|g_\e\left(\D_\e X_\e, \mu_\e+\D_\e X_\e\right)\right\|_{k,\alpha,\nu+1}\cle \left\|\D_\e  X_\e\otimes \left(\mu_\e+\D_\e X_\e\right)\right\|_{k,\alpha, \nu-3}\\
\cle&\left\|\D_\e X_\e\right\|_{k,\alpha,\nu-2}\left(\left\|\mu_\e\right\|_{k,\alpha,-1}+\left\|\D_\e X_\e\right\|_{k,\alpha,-1}\right)\\
\cle&\e^{\nu/2}\left(1+\e^{1-\nu}\left\|\D_\e X_\e\right\|_{k,\alpha,\nu-2}\right)\cle \e^{\nu/2}\left(1+\e^{1-\nu/2}\right)\cle \e^{\nu/2};
\end{aligned}
\end{equation*}
note that the above relies on the fact that $-1<\nu-2$ and $1-\tfrac{\nu}{2}>0$. The third estimate is proved similarly using
$$\|\D_\e X_\e\|_{k,\alpha, -2;U_\e}\cle \e^{-\nu/2} \|\D_\e X_\e\|_{k,\alpha, \nu-2;U_\e}\cle 1$$
and the second claim of Proposition \ref{div:est}.
\end{proof}

With little additional work one can prove the following estimates for $|\me|^2_{g_\e}$ itself; these estimates are crucial in our analysis of the Lichnerowicz equation. 

\begin{prop}\label{me:est}
Let  $\nu\in\left(\tfrac{3}{2},2\right)$. 
\begin{enumerate}
\item We have $\left\|\left|\me\right|^2_{g_\e}\right\|_{k,\alpha, 2}\cle 1$.
\item If  $U_\e:=w_\e^{-1}\left(\tfrac{\sqrt{\e}}{4}, +\infty\right)\subseteq M_\e$, then $\left\|\left|\me\right|^2_{g_\e}\right\|_{k,\alpha, 0; U_\e}\cle 1$.
\item If $V_\e:=w_\e^{-1}\left(0,12\sqrt{\e}\right)\subseteq M_\e$, then $\left\|\left|\me\right|^2_{g_\e}\right\|_{k,\alpha, \nu+1; V_\e}\cle \e^{\nu-1}$.
\item If $W_\e:=w_\e^{-1}\left(\tfrac{\sqrt{\e}}{4},12\sqrt{\e}\right)\subseteq M_\e$, then $\left\|\left|\me\right|^2_{g_\e}\right\|_{k,\alpha, \nu; W_\e}\cle \e^{\nu/2}$.
\end{enumerate}
\end{prop}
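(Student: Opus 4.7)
The plan is to split $|\me|^2_{g_\e} = |\mu_\e|^2_{g_\e} + \bigl(|\me|^2_{g_\e} - |\mu_\e|^2_{g_\e}\bigr)$ and bound the two summands using Propositions \ref{div:est} and \ref{corr:est} respectively, combined with the tensor-product/index-raising calculus of Proposition \ref{Holder:props}. Since $\genabla g_\e\equiv 0$ and $|g_\e|_{g_\e}$ is constant, one has $\|g_\e\|_{k,\alpha,-2}\cle 1$ and (by \ref{Holder:props}(4)) $\|g_\e^{-1}\|_{k,\alpha,2}\cle 1$. Consequently, forming $|T|^2_{g_\e}$ from a $(0,2)$-tensor $T$ of weight $\nu$ produces a scalar of weight $2\nu+4$ with norm $\cle \|T\|^2_{k,\alpha,\nu}$.

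First I would apply this principle to the four bounds of Proposition \ref{div:est}, obtaining immediately $\||\mu_\e|^2_{g_\e}\|_{k,\alpha,2}\cle 1$ (global, weight $2(-1)+4=2$), $\||\mu_\e|^2_{g_\e}\|_{k,\alpha,0;U_\e}\cle 1$ (weight $2(-2)+4=0$), and $\||\mu_\e|^2_{g_\e}\|_{k,\alpha,4;V_\e}\cle \e^2$ (weight $2\cdot 0 +4 = 4$, with the same holding on $W_\e\subseteq V_\e$). Parts (3) and (4) then require a downward weight shift via Proposition \ref{Holder:props}(1): on $V_\e$, where $\inf w_\e \sim \e$, lowering weight $4\to\nu+1$ costs a factor $\e^{-(3-\nu)}$ and yields the required $\e^{\nu-1}$; on $W_\e$, where $w_\e\sim\sqrt{\e}$, lowering $4\to\nu$ costs $(\sqrt{\e})^{-(4-\nu)}$ and yields $\e^{\nu/2}$.

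For the difference, Proposition \ref{corr:est}(2)--(3) supplies $\||\me|^2-|\mu_\e|^2\|_{k,\alpha,\nu+1}\cle \e^{\nu/2}$ globally and $\||\me|^2-|\mu_\e|^2\|_{k,\alpha,\nu;U_\e}\cle \e^{\nu/2}$. Analogous weight shifts via \ref{Holder:props}(1) give, for (1), $\e^{-(\nu-1)}\cdot\e^{\nu/2}=\e^{1-\nu/2}\cle 1$; for (2), $\e^{-\nu/2}\cdot\e^{\nu/2}=1$; for (3), the global estimate restricts trivially and, since $\nu<2$, already satisfies $\e^{\nu/2}\le \e^{\nu-1}$; and for (4), the $U_\e$-estimate restricts to $W_\e\subseteq U_\e$ as $\e^{\nu/2}$. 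The triangle inequality then yields all four claims. The only care needed is bookkeeping: one must (i) use the sharper $U_\e$-version \ref{corr:est}(3) rather than the global version whenever working on $U_\e$ or $W_\e$, since otherwise the weight shifts become too costly, and (ii) correctly identify $\inf w_\e$ on each of $M_\e$, $U_\e$, $V_\e$, $W_\e$ as $\e, \sqrt\e, \e, \sqrt\e$ respectively. Given the groundwork laid in the preceding propositions, no substantive analytic difficulty remains.
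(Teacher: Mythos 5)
Your proposal is correct and follows essentially the same route as the paper: decompose $|\me|^2_{g_\e}=|\mu_\e|^2_{g_\e}+\bigl(|\me|^2_{g_\e}-|\mu_\e|^2_{g_\e}\bigr)$, bound the first term via Proposition \ref{div:est} and the second via Proposition \ref{corr:est}, and shift weights with Proposition \ref{Holder:props}(1) using $\inf w_\e\sim\e$ on $M_\e,V_\e$ and $\inf w_\e\sim\sqrt{\e}$ on $U_\e,W_\e$. The paper writes out only claim (4) (with exactly your arithmetic, $\e^{(\nu-4)/2}\cdot\e^{2}+\e^{\nu/2}\cle\e^{\nu/2}$) and leaves the rest to the reader, so your write-up of all four cases is just the omitted bookkeeping done correctly.
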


\begin{proof}
The proofs of all four claims are simple manipulations of estimates in Propositions \ref{div:est} and \ref{corr:est}, using properties of Proposition \ref{Holder:props}.  To illustrate the arguments we prove the last  claim of our proposition. \begin{equation*}
\begin{aligned}
\left\|\left|\me\right|^2_{g_\e}\right\|_{k,\alpha,\nu;W_\e}\cle 
&\e^{(\nu-4)/2}\left\|\left|\mu_\e\right|^2_{g_\e}\right\|_{k,\alpha,4;W_\e}+
\left\|\left|\me\right|^2_{g_\e}-\left|\mu_\e\right|^2_{g_\e}\right\|_{k,\alpha, \nu;W_\e}\\
\cle &\e^{(\nu-4)/2}\left\|\mu_\e\otimes\mu_\e\right\|_{k,\alpha,0;W_\e}+\e^{\nu/2}\cle \e^{(\nu-4)/2}\cdot \e^2+\e^{\nu/2}\cle \e^{\nu/2}.
\end{aligned}
\end{equation*}
The remaining proofs are left to the reader. 
\end{proof}

We conclude this section with an interpretation of  
$\left\|\me-\mu_\e\right\|_{k,\alpha,\nu-2}\cle \e^{\nu/2}$  in terms of the ordinary and the scaled point-particle limit properties.  Consider a compact subset  $\mathbf{K}\subseteq M\smallsetminus\{S\}$ and the corresponding embedding  
$i_\e:\mathbf{K}\to M_\e$. Since $1\cle w_\e$ on $i_\e(\mathbf{K})$ the norm  equivalence \eqref{Holder:equiv}  yields 
\begin{equation}\label{ordinarylim:mu}
\left\|(i_\e)^*\me -\mu\right\|_{C^k(\mathbf{K},g)}=O(\e^{\nu/2}) \text{\ \ as\ \ }\e\to 0.
\end{equation}
On the other hand, consider a compact set $\mathbf{K}\subseteq M_0$ and the corresponding embedding  $\iota_\e:\mathbf{K}\to M_\e$. Note that $w_\e \sim \e$ on $\iota_\e(\mathbf{K})$. Proposition \ref{corr:est} and the norm equivalence \eqref{Holder:equiv} imply  $$\e^{(2+k)+(\nu-2)}\left|\genabla^k\left(\me-\mu_\e\right)\right|_{g_\e}=\e^{\nu-2}\left|\gonabla^k\left(\me-\mu_\e\right)\right|_{g_0}\cle \e^{\nu/2}.$$
Since $\left\|\left(\iota_\e\right)^*\left(\tfrac{\t}{3}g_\e\right)\right\|_{C^k(\mathbf{K},g_0)}=\t\e^2$ we have 
\begin{equation}\label{scaledlim:mu}
\left\|\tfrac{1}{\e}\left(\iota_\e\right)^*\left(\me+\tfrac{\t}{3}g_\e\right)-K_0\right\|_{C^k(\mathbf{K},g_0)}=O\left(\e^{1-\nu/2}\right) \text{\ \ as\ \ }\e\to 0.
\end{equation} 
The limits \eqref{ordinarylim:mu} and \eqref{scaledlim:mu} show that the \emph{approximate data} $\left(g_\e, \me+\tfrac{\t}{3}g_\e\right)$ satisfy the ordinary and the scaled point-particle limit properties.

\section{The Lichnerowicz Equation}
While the data $\left(g_\e, \me+\tfrac{\t}{3}g_\e\right)$ satisfy the momentum constraint, they need not satisfy the Hamiltonian constraint. To address the situation we apply the conformal method; in other words, we make a suitable conformal change of the data which repairs the Hamiltonian constraint and yet preserves the momentum constraint.  One such change is  
\begin{equation} \label{conf:format}
g_\e\to \phi_\e^4 g_\e, \ \ \ \me+\tfrac{\tau}{3}g_\e\to \phi_\e^{-2}\me+\tfrac{\t}{3}\phi_\e^4g_\e,
\end{equation} 
where $\phi_\e$ is a positive solution of the  Lichnerowicz equation \eqref{LICHN}. In light of \eqref{ordinarylim:mu} and \eqref{scaledlim:mu} we see that the ordinary and the scaled point-particle limit properties hold for the resulting data provided the solution of the Lichnerowicz equation satisfies $\phi_\e\approx 1$ (in some sense of the word). To avoid notational confusion we let $\phi_0$ be the constant function $\phi_0\equiv 1$.

Most of the work in this section is dedicated to establishing the existence of a solution $\phi_\e$ of \eqref{LICHN} such that $\phi_\e-\phi_0$ satisfies desirable H\"older estimates. The first step in this analysis  is to understand the extent to which the  function  $\phi_0$ fails to be a solution of \eqref{LICHN}. More specifically, we need H\"older estimates  of $\Ne(\phi_0)$ where $\Ne$ denotes  the (non-linear) Lichnerowicz operator
$$\Ne(\phi):=\Delta_{g_\e}\phi - \tfrac{1}{8}R(g_\e)\phi+\tfrac{1}{8}|\me|_{g_\e}^2\phi^{-7}+\left(\tfrac{\Lambda}{4}-\tfrac{\t^2}{12}\right)\phi^5.$$

\begin{prop}\label{NE:est}
If $\nu\in\left(\tfrac{3}{2},2\right)$ then
$\|\Ne(\phi_0)\|_{k,\alpha, \nu+1}\cle \e^{\nu/2}$.
\end{prop}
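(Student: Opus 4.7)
The plan is to write $\phi_0 \equiv 1$ into $\Ne$ to obtain $\Ne(\phi_0)=-\tfrac{1}{8}R(g_\e)+\tfrac{1}{8}|\me|^2_{g_\e}+\bigl(\tfrac{\Lambda}{4}-\tfrac{\t^2}{12}\bigr)$, and then separate the contribution of the divergence-repair correction by decomposing
$$\Ne(\phi_0)=E_\e+\tfrac{1}{8}\bigl(|\me|^2_{g_\e}-|\mu_\e|^2_{g_\e}\bigr),\qquad E_\e:=-\tfrac{1}{8}R(g_\e)+\tfrac{1}{8}|\mu_\e|^2_{g_\e}+\tfrac{\Lambda}{4}-\tfrac{\t^2}{12}.$$
The second piece is immediately $O(\e^{\nu/2})$ in $\|\cdot\|_{k,\alpha,\nu+1}$ by Proposition \ref{corr:est}(2), so the entire burden is to estimate $E_\e$.

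Next I would partition $M_\e$ into three pieces according to the value of $w_\e$: the ``$M$-far'' region $\{w_\e\ge 12\sqrt\e\}$, where $g_\e=g$ and $\mu_\e=\mu$; the ``$M_0$-far'' region $\{w_\e\le \sqrt\e/4\}$, where $g_\e=\e^2g_0$ and $\mu_\e=\e\mu_0$; and the transition region $W_\e=w_\e^{-1}(\sqrt\e/4,12\sqrt\e)$. On the first piece the Hamiltonian constraint for $(M,g,K)$ forces $E_\e\equiv 0$. On the second piece, the Hamiltonian constraint for $(M_0,g_0,K_0)$ combined with $R(\e^2 g_0)=\e^{-2}R(g_0)$ and $|\e\mu_0|^2_{\e^2g_0}=\e^{-2}|\mu_0|^2_{g_0}$ yields $E_\e\equiv c_0:=\tfrac{\Lambda}{4}-\tfrac{\t^2}{12}$; since $\sup w_\e\cle \sqrt\e$ there, $\|c_0\|_{k,\alpha,\nu+1}\cle\e^{(\nu+1)/2}\cle\e^{\nu/2}$. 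The constant contribution on $W_\e$ is dispatched identically.

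The substantive work is on $W_\e$. For $|\mu_\e|^2_{g_\e}$, I would apply Proposition \ref{div:est}(3) to get $\|\mu_\e\|_{k,\alpha,0;V_\e}\cle\e$, then pass through the tensor-product and index-raising properties of Proposition \ref{Holder:props} to obtain $\bigl\||\mu_\e|^2_{g_\e}\bigr\|_{k,\alpha,4;W_\e}\cle\e^2$; the weight-shift estimate of Proposition \ref{Holder:props}(1) with $\inf_{W_\e}w_\e\sim\sqrt\e$ then gives $\bigl\||\mu_\e|^2_{g_\e}\bigr\|_{k,\alpha,\nu+1;W_\e}\cle\e^2/(\sqrt\e)^{3-\nu}=\e^{(\nu+1)/2}\cle\e^{\nu/2}$. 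The main obstacle is the scalar-curvature term $R(g_\e)$ on $W_\e$. I would work in a Type-$\G$ chart $\s_P$ with $|x_P|\sim\sqrt\e$ and analyze the rescaled metric $g_P=(4/|x_P|^2)\s_P^*g_\e$, which is a convex combination (via $\chi_1$) of the pullbacks of $\se^*\e^2g_0$ and $\sM^*g$. The key claim is that
$$\|g_P-\delta\|_{C^{k+2,\alpha}(B_1,\delta)}\cle\sqrt\e$$
uniformly over such $P$: the asymptotic estimate \eqref{asymptg} gives a difference $\le\e/|\H_P(x)|\sim\sqrt\e$ (with matching derivative bounds), the expansion \eqref{metricM} gives $|y|^2\sim\e$ terms, and the scaled derivatives of the cutoff $\chi_1$ are uniformly bounded by the argument from Proposition \ref{prop-covers}(2) (since $|x_P|\sim\sqrt\e$ places us exactly in the regime where the factors $(|x_P|/(2\sqrt\e))^{|\beta|}$ are of order one).

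Granting this estimate, smoothness of $R$ at $\delta$ and $R(\delta)=0$ imply $\|R(g_P)\|_{C^{k,\alpha}(B_1,\delta)}\cle\sqrt\e$. Finally, conformal scaling gives $\s_P^*R(g_\e)=(4/|x_P|^2)R(g_P)$, so by Proposition \ref{main.form},
$$\wef^{\nu+1}\|\s_P^*R(g_\e)\|_{C^{k,\alpha}(B_1,\delta)}\cle|x_P|^{\nu-1}\|R(g_P)\|_{C^{k,\alpha}}\cle(\sqrt\e)^{\nu-1}\cdot\sqrt\e=\e^{\nu/2},$$
uniformly over $P\in W_\e$. Combining the bounds on the three regions and the three summands yields $\|E_\e\|_{k,\alpha,\nu+1}\cle\e^{\nu/2}$, completing the proof. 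The essential obstacle is controlling the interplay, in the transition region, between the cutoff derivatives (nominally of size $\e^{-|\beta|/2}$) and the smallness of $g$, $\e^2 g_0$ from $\delta$, in such a way that the cancellations enforced by the individual Hamiltonian constraints translate into the $\sqrt\e$-bound on the rescaled metric rather than a worse one.
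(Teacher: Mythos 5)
Your proposal is correct and follows essentially the same route as the paper: case analysis by the value of $w_\e$, cancellation of the bulk terms via the two Hamiltonian constraints, the correction $|\me|^2_{g_\e}-|\mu_\e|^2_{g_\e}$ handled by Proposition \ref{corr:est}, and the transition-region bound for $R(g_\e)$ obtained from $\|g_P-\delta\|\cle \e/|x_P|+|x_P|^2$ with $|x_P|\sim\sqrt{\e}$ together with the conformal scaling $\s_P^*R(g_\e)=\tfrac{4}{|x_P|^2}R(g_P)$. The only cosmetic difference is that you split off $|\me|^2_{g_\e}-|\mu_\e|^2_{g_\e}$ globally and estimate $|\mu_\e|^2_{g_\e}$ on the transition region directly from Propositions \ref{div:est}(3) and \ref{Holder:props}, whereas the paper invokes Proposition \ref{me:est}(4), which is proved by exactly that manipulation.
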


\begin{proof} We show that 
\begin{equation}\label{ne:basicest}
\wef^{\nu+1}\left\|\Phi^*\Ne(\phi_0)\right\|_{C^{k,\alpha}(B_1,\delta)}\cle \e^{\nu/2}
\end{equation} 
independently of $\Phi\in\C_\e$. We distinguish three cases, based on whether a suitable pullback of $(g_\e,\mu_\e)$ matches with  $(g,\mu)$, with $(\e^2g_0,\e\mu_0)$, or with neither.

{\sc The case of $\mathrm{Im}(w_\e\circ \Phi) \subseteq \left[8\sqrt{\e},\infty\right)$.} Here we use the fact that $(g,K)$ satisfies the Hamiltonian constraint with cosmological constant $\Lambda$. Since $\mu$ is trace-free we have $|K|_g^2=\tfrac{\t^2}{3}+|\mu|_g^2$ and  
$$R(g)-|\mu|_g^2+\tfrac{2}{3}\t^2-2\Lambda=0.$$
Consequently, the expression for $\Phi^*\Ne(1)$ simplifies to:
$$\Phi^*\Ne(\phi_0)=-\tfrac{1}{8}\left(R(\Phi^*g_\e)-|\Phi^*\mu_\e|^2_{\Phi^*g_\e}+\tfrac{2}{3}\t^2-2\Lambda\right)+\tfrac{1}{8}\Phi^*\left(\left|\me\right|^2_{g_\e}-\left|\mu_\e\right|^2_{g_\e}\right)=\tfrac{1}{8}\Phi^*\left(\left|\me\right|^2_{g_\e}-\left|\mu_\e\right|^2_{g_\e}\right).$$ 
The claim \eqref{ne:basicest} is now immediate from Proposition \ref{corr:est}. 

{\sc The case of $\mathrm{Im}(w_\e\circ \Phi) \subseteq \left(0,\tfrac{\sqrt{\e}}{2}\right]$.} We first simplify the expression for $\Phi^*\Ne(1)$ using the fact that $(\e^2 g_0, \e K_0)$ satisfies the Hamiltonian constraint with no cosmological constant:
$$\Phi^*\Ne(\phi_0)=\tfrac{1}{8}\Phi^*\left(\left|\me\right|^2_{g_\e}-\left|\mu_\e\right|^2_{g_\e}\right)+\left(\tfrac{\Lambda}{4}-\tfrac{\t^2}{12}\right).$$
By assumption we have $w_\e\circ \Phi \cle\sqrt{\e}$ and therefore
\begin{equation}\label{ne:constterms}
\wef^{\nu+1}\left\|\tfrac{\Lambda}{4}-\tfrac{\t^2}{12}\right\|_{C^{k,\alpha}(B_1,\delta)}\cle \e^{\frac{\nu+1}{2}}\cle \e^{\frac{\nu}{2}}.
\end{equation}
Once again, the claim \eqref{ne:basicest} follows from Proposition \ref{corr:est}.

{\sc The case of $\mathrm{Im}(w_\e\circ \Phi)\cap \left(\tfrac{\sqrt{\e}}{2},8\sqrt{\e}\right)\neq \emptyset$.} 
More specifically, we have  
$\Phi=\s_P$ for some $P=\s(x_P)$ with $\tfrac{\sqrt{\e}}{2}<|x_P|<8\sqrt{\e}$. 
Our strategy is to estimate each individual term of $\Phi^*\Ne(\phi_0)$. 

Recall from the proof of Proposition \ref{prop-covers} that 
$$\|g_P-\delta\|_{C^{k+3}}\cle \frac{\e}{|x_P|}+|x_P|^2.$$
This estimate implies $\|R(g_P)\|_{C^{k,\alpha}(B_1,\delta)}\cle \sqrt{\e}$ and 
$$\wef^{\nu+1}\|\Phi^*R(g_\e)\|_{C^{k,\alpha}(B_1,\delta)}=|x_P|^{\nu+1}\tfrac{4}{|x_P|^2}\|R(g_P)\|_{C^{k,\alpha}(B_1,\delta)}\cle \e^{\nu/2}$$
independently of $P$ such that $\tfrac{\sqrt{\e}}{2}<|x_P|<8\sqrt{\e}$. To estimate $|\me|^2_{g_\e}$ we use the last claim of Proposition \ref{me:est} which, by virtue of $\mathrm{Im}\Phi\subseteq w_\e^{-1}\left(\tfrac{\sqrt{\e}}{4}, 12\sqrt{\e}\right)$, implies
$$\wef^{\nu+1}\|\Phi^*|\me|^2_{g_\e}\|_{C^{k,\alpha}(B_1,\delta)}\cle \e^{\nu/2}.$$
Since \eqref{ne:constterms} continues to hold in this case, our proof is complete. 
\end{proof}

The main ingredient in our study of the Lichnerowicz equation is  the uniform invertibility of the linearizations 
$$\L_\e:=\Delta_{g_\e}-\frac{1}{8}R(g_\e)-\frac{7}{8}|\me|^2_{g_\e}+5\left(\tfrac{\Lambda}{4}-\tfrac{\t^2}{12}\right)$$
of $\Ne$ at $\phi_0$. To match the notation here with that used in Section \ref{operators:section} we let 
$$h_\e:=-\frac{7}{8}|\me|^2_{g_\e}+5\left(\tfrac{\Lambda}{4}-\tfrac{\t^2}{12}\right).$$
It was claimed in Section \ref{operators:section} that $h_\e$ satisfy an estimate of the form 
$\|h_\e\|_{k,\alpha, 2}\cle 1$.
This fact is a consequence of Proposition \ref{me:est} and $\left\|\tfrac{\Lambda}{4}-\tfrac{\t^2}{12}\right\|_{k,\alpha,2}\cle 1$.

Our approach to proving the uniform invertibility of $\L_\e$ is analogous to the approach taken in our analysis of the vector Laplacian. As the proofs of the main technical propositions, Proposition \ref{VL:MAIN} and Proposition \ref{LICHN:MAIN}, share many common features  we avoid repetition whenever possible and refer the reader to the proof of Proposition \ref{VL:MAIN}. 

\begin{prop}\label{LICHN:MAIN}
Let $\nu\in\left(\tfrac{3}{2}, 2\right)$ and let $\e>0$ be sufficiently small.  We have 
$$\|\phi\|_{0,0,\nu-1}\cle \|\L_\e \phi\|_{0,0,\nu+1}$$
independently of smooth functions $\phi$ on $M_\e$. 
\end{prop}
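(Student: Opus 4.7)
The plan is to mirror the proof of Proposition \ref{VL:MAIN}, replacing vector fields by scalar functions and $L_\e$ by $\L_\e$. Suppose for contradiction there exist $\e_j\downarrow 0$ and smooth $\phi_j$ on $M_{\e_j}$ with $\|\phi_j\|_{0,0,\nu-1}=1$ and $\|\L_{\e_j}\phi_j\|_{0,0,\nu+1}\to 0$. Pick maximizers $P_j$ of $w_{\e_j}^{\nu-1}|\phi_j|$ and split the sequence into the same three subcases based on the asymptotic size of $w_{\e_j}(P_j)$. In each case the Exhaustion Argument carries over essentially verbatim (using scalar interior Schauder/$L^2$ estimates and the uniform bound $\|h_\e\|_{k,\alpha,2}\cle 1$) to produce a non-zero limit function $u$ on the corresponding model manifold --- $M\smallsetminus\{S\}$, $\R^3\smallsetminus\{0\}$, or $M_0$ --- that is annihilated by an $\e_j\to 0$ limit of the rescaled operator.

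The first crucial step is identifying those limit operators via the Hamiltonian constraints. On $M\smallsetminus\{S\}$ one has $\pMi^*\L_{\e_j}=\Delta_g-\tfrac{1}{8}R(g)-\tfrac{7}{8}|\mu|_g^2+\tfrac{5\Lambda}{4}-\tfrac{5\t^2}{12}$ away from $S$, and substituting $R(g)=|\mu|_g^2-\tfrac{2}{3}\t^2+2\Lambda$ together with $|K|_g^2=|\mu|_g^2+\tfrac{\t^2}{3}$ collapses this to $\Delta_g-(|K|_g^2-\Lambda)$ --- precisely the operator of the Injectivity Assumption. On $M_0$, rescaling the pullback $\pei^*\L_{\e_j}$ by a factor of $\e_j^2$ (and $\phi_j$ by $\e_j^{\nu-1}$) yields $\Delta_{g_0}-\tfrac{1}{8}R(g_0)-\tfrac{7}{8}|\mu_0|^2_{g_0}+O(\e_j^2)$, which by $R(g_0)=|\mu_0|^2_{g_0}=|K_0|^2_{g_0}$ (constraint with $\Lambda=0$ and $\tr_{g_0}K_0=0$) converges to $\Delta_{g_0}-|K_0|^2_{g_0}$. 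In Case $\R^3\smallsetminus\{0\}$, after blowing up by $w_j=w_{\e_j}(P_j)$ the rescaled operator is $\Delta_{g_j^\Omega}-\tfrac{1}{8}R(g_j^\Omega)+w_j^2(\sigma\circ\H_j)^*h_{\e_j}$; since $g_j^\Omega\to\delta$ in $C^k$ on compacta and the rescaled potential $w_j^2(\sigma\circ\H_j)^*|\mei|^2_{g_{\e_j}}$ vanishes on compact subsets of $\R^3\smallsetminus\{0\}$ (extracted separately in each of the subregimes $w_j\gg\sqrt{\e_j}$, $w_j\sim\sqrt{\e_j}$, $w_j\ll\sqrt{\e_j}$ from parts (2), (4), (3) of Proposition \ref{me:est}, using $\e_j/w_j\to 0$ and $\nu>1$), the limit equation reduces to $\Delta_\delta u=0$.

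The three contradictions then follow from standard non-existence results. In Case $M\smallsetminus\{S\}$, Theorem \ref{ell-M;S} places $u\in C^{2,0,\nu-1}(M;S)$, so $|u|\cle w_M^{1-\nu}$ and $|\gnabla u|\cle w_M^{-\nu}$; the weighted integration-by-parts argument across geodesic spheres from Proposition \ref{VL:MAIN} carries over (the boundary terms are $O(r^{2-\nu})$ and $O(r^{3-\nu})$, both vanishing as $r\to 0$ because $\nu<2$), extending $u$ weakly and hence smoothly to a solution of $(\Delta_g-(|K|_g^2-\Lambda))u=0$ on all of $M$; the Injectivity Assumption forces $u\equiv 0$. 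In Case $M_0$, scalar elliptic regularity (analogous to Theorem \ref{ell-M;infty}) places $u\in C^{2,0,\nu-1}(M_0;\infty)$, so $u$ decays at infinity; since $|K_0|^2_{g_0}\ge 0$, the strong maximum principle applied to $\pm u$ gives $u\equiv 0$. In Case $\R^3\smallsetminus\{0\}$, Theorem \ref{ell-R3} gives $u\in C^{2,0,\nu-1}(\R^3;0,\infty)$ with $|u|\cle r^{1-\nu}$ and $1-\nu\in(-1,-\tfrac{1}{2})$; the singularity at the origin is weaker than the fundamental solution $r^{-1}$ and hence removable, so $u$ extends harmonically to $\R^3$, and since it also decays at infinity it vanishes by Liouville.

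The principal obstacle, absent from the vector Laplacian analysis, is the zeroth-order term of $\L_\e$: one must verify regime-by-regime that the rescaled potential $w_j^2(\sigma\circ\H_j)^*|\mei|^2_{g_{\e_j}}$ vanishes on compact subsets of $\R^3\smallsetminus\{0\}$ --- the estimate in each subregime produces a small factor of the form $(\e_j/w_j)^{\nu-1}$ or $\e_j$ --- so that the resulting limit equation is truly $\Delta_\delta u=0$ rather than $(\Delta_\delta-V)u=0$ for some residual $V\ge 0$. The analogous check for the $R(g_j^\Omega)$ contribution is similar but more elementary, and one expects no further surprises elsewhere.
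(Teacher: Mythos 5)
Your proposal is correct and follows essentially the same route as the paper: contradiction via a maximizing sequence, the same three-case split, the Exhaustion Argument, identification of the limit operators through the Hamiltonian constraints, and contradictions from the {\sc Injectivity assumption}, the Maximum Principle on $M_0$, and the vanishing of decaying harmonic functions on $\R^3$. Your minor variations --- handling the rescaled potential by splitting into the regimes covered by parts (2), (4), (3) of Proposition \ref{me:est} rather than the paper's joint use of parts (2) and (3), and using a removable-singularity/Liouville argument in place of the paper's integration-by-parts extension across the origin --- are equivalent to what the paper does.
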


\begin{proof}
We assume opposite: that there are $\e_j\downarrow 0$ ($j\in\N$) and (smooth) functions $\phi_j$ on $M_{\e_j}$ with 
\begin{equation*}
\|\phi_j\|_{0,0,\nu-1}=1 \text{\ \ and\ \ } \|\L_{\e_j} \phi_j\|_{0,0,\nu+1}\to 0.
\end{equation*}
Let $P_j\in M_{\e_j}$ be the points at which
$$w_{\e_j}(P_j)^{-1+\nu} \left|\phi_j(P_j)\right|=1.$$
Depending on  the nature of the sequence $\left(w_{\e_j}(P_j)\right)_{j\in\N}$ we distinguish three cases: {\sc Case $M\smallsetminus\{S\}$}, {\sc Case $\R^3\smallsetminus\{0\}$} and {\sc Case $M_0$}; this is done \emph{exactly} as in the proof of Proposition \ref{VL:MAIN}. 

\underline{\sc Obtaining the contradiction in Case $M\smallsetminus\{S\}$.} The strategy in this case is to use the  sequence $\left(\phi_j\right)_{j\in\N}$ to construct a non-zero function $\psi:M\to \R$ which is in the kernel of 
$$\L_g:=\Delta_g-\frac{1}{8}R(g)-\frac{7}{8}|\mu|^2_g+5\left(\tfrac{\Lambda}{4}-\tfrac{\t^2}{12}\right);$$
the existence of such a function contradicts the {\sc Injectivity assumption} on $(M,g,K)$. 

Let $D\subseteq M\smallsetminus\{S\}$ be a compact subset which contains $w_M^{-1}\left([c_M,+\infty)\right)$. By eliminating finitely many $\phi_j$ from consideration we may assume that the functions
$$\psi_j:D\to \R,\ \ \ \psi_j:=\phi_j\circ \pMi$$
are well-defined and satisfy 
\begin{equation}\label{LICHN:1}
\sup_D|\psi_j|\ge c_1 \text{\ \ and\ \ } |\psi_j|\le w_M^{1-\nu}
\end{equation}
for some constant $c_1>0$ independent of $D$ and $j$. Since  $w_M\big|_D$ is bounded away from zero the sequence $\left(\psi_j\right)_{j\in \N}$ is bounded in $C^0(D,g)$ and $\|\pMi^*\lei \phi_j\|_{C^0(D,g)}\to 0$. 

Let $j$ be large enough so that $\pMi^* g_{\e_j}=g$, $\pMi^* \mu_{\e_j}=\mu$. One easily computes 
$$\left|\pMi^* \lei \phi_j - \L_g \psi_j\right|=\frac{7\left|\psi_j\right|}{8}\left|\left(\left|\mei\right|^2_{g_{\e_j}}\circ \pMi\right)-\left|\mu\right|^2_g\right|.$$
Proposition \ref{corr:est} implies 
$$\left\|\pMi^* \lei \phi_j - \L_g \psi_j\right\|_{C^0(D,g)}\le c_2(D)\e_j^{\nu/2},\ \ j\gg 1$$
for some constant $c_2(D)$ which potentially depends on $D$. It now follows that 
\begin{equation}\label{LICHN:2}
\lim_{j\to +\infty}\|\L_g \psi_j\|_{C^0(D,g)} =0.
\end{equation}

The properties \eqref{LICHN:1} and \eqref{LICHN:2} allow us to apply the Exhaustion Argument (see the proof of Proposition \ref{VL:MAIN}) to functions $\psi_j$. We conclude that there exists a \emph{non-zero} function $\psi\in C^{0,0,\nu-1}(M;S)$  such that 
$$\L_g \psi=0 \text{\ \ on\ \ }M\smallsetminus\{S\}.$$
We see from Theorem \ref{ell-M;S} that $\psi\in C^{2,0,\nu-1}(M;S)$  and  consequently
$$\left|\nabla \psi\right|_g\le c_3\cdot w_M^{-\nu}$$
for some constant $c_3$. This knowledge of the ``blow-up" rate of $\psi$ at $S$ allows us to show that $\psi$ satisfies $\L_g \psi =0$ \emph{weakly on $M$}. 
The computation is analogous to that in the proof of Proposition \ref{VL:MAIN}. The conclusion is that $\psi$ is a non-zero smooth function on $M$ which is in the kernel of $\L_g$.

On the other hand, we see from the Hamiltonian constraint that  $R(g)-|\mu|_g^2+\tfrac{2}{3}\t^2-2\Lambda=0$ and
\begin{equation*}
\L_g=\Delta_g-\left(|\mu|^2_g+\tfrac{1}{3}\t^2-\Lambda\right)=\Delta_g-\left(|K|^2_g-\Lambda\right).
\end{equation*}
It follows from the {\sc Injectivity assumption} that $\L_g$ has trivial kernel. We have reached a contradiction which  completes the argument in {\sc Case $M\smallsetminus\{S\}$}.

\underline{\sc Obtaining the contradiction in Case $\R^3\smallsetminus\{0\}$.}
The strategy in this case is to use the  sequence $\left(\phi_j\right)_{j\in\N}$ to construct a non-zero harmonic function $\psi:\R^3\to \R$ which is in $C^{0,0,\nu-1}(\R^3;0,\infty)$.
The existence of such a function is, by the Maximum Principle, a contradiction.

Adopt the notation used in the proof of Proposition \ref{VL:MAIN}, {\sc Case $\R^3\smallsetminus\{0\}$}. Define 
$$\psi_j:\Omega_j\to \R\ \ \ \psi_j:=w_j^{\nu-1}\,\left(\phi_j\circ \s\circ \H_j\right).$$
The scaling identity $w_{\e_j}\circ\s\circ \H_j(Q)=w_j|Q|$ implies the following properties of the functions $\psi_j$:
\begin{itemize}
\item The supremum over the unit sphere $S^2\subseteq \R^3\smallsetminus\{0\}$ satisfies $\sup_{S^2}\left|\psi_j\right|\ge 1$.
\item If $Q\in \Omega_j$ then $\left|\psi_j(Q)\right|\le |Q|^{1-\nu}$.
\end{itemize}
Consider the operators  
$$\L_j:=\Delta_{g_j^\Omega}-\tfrac{1}{8}R(g_j^\Omega)-\tfrac{7}{8}\,w_j^2\left(\left|\mei\right|^2_{g_{\e_j}}\circ\s\circ\H_j\right)+5\,w_j^2\left(\tfrac{\Lambda}{4}-\tfrac{\t^2}{12}\right);$$
these operators are of interest since
$$\L_j \psi_j=w_j^{\nu+1}\,\left[ (\L_{\e_j}\phi_j)\circ \s\circ \H_j\right]$$
for all $j\in \N$. Using our assumption on $\L_{\e_j}\phi_j$ one can easily show that 
$$\lim_{j\to +\infty}\|\L_j \psi_j\|_{C^0(D,g_j^\Omega)}=0$$
on each compact subset $D\subseteq \R^3\smallsetminus\{0\}$. 
Notice that, in some sense, the sequence of operators $(\L_j)_{j\in\N}$ itself converges. More precisely, we claim that if $\eta$ is a test function on $\R^3\smallsetminus\{0\}$ then  
\begin{equation}\label{LICHN:passtoEucl}
\lim_{j\to \infty}\left\|\L_j\eta - 
\Delta_\delta \eta\right\|_{C^0\left(\R^3,\delta\right)}=0.
\end{equation}
Before we prove \eqref{LICHN:passtoEucl} we point out that this identity plays the same role in the overall proof of Proposition  \ref{LICHN:MAIN} as identity \eqref{VL:passtoEucl} plays in the proof of Proposition \ref{VL:MAIN}.

To prove \eqref{LICHN:passtoEucl} let $\eta$ be a test function on $\R^3\smallsetminus\{0\}$. We compute:
$$|\L_j\eta-\Delta_\delta\eta|\le
|\Delta_{g_j^\Omega}\eta-\Delta_\delta\eta|+
\tfrac{1}{8}\left|R(g_j^\Omega)\eta\right|+
\tfrac{7}{8}w_j^2\left(\left|\mei\right|^2_{g_{\e_j}}\circ\s\circ\H_j\right)|\eta|+
5\,w_j^2\left|\left(\tfrac{\Lambda}{4}-\tfrac{\t^2}{12}\right)\eta\right|.
$$
Since $g_j^\Omega\to \delta$ on $\mathrm{supp}(\eta)$ uniformly with all the derivatives, we have that both $\|\Delta_{g_j^\Omega}\eta-\Delta_\delta\eta\|_{C^0\left(\R^3,\delta\right)}$ and $\left\|R(g_j^\Omega)\eta\right\|_{C^0\left(\R^3,\delta\right)}$ converge to $0$ as $j\to \infty$. 
The second and the third estimate of Proposition \ref{me:est}, together with \eqref{normeq:restr}, imply that for some constant $\tilde{c}>0$ and all 
$j\in \N$ at least one of the following two inequalities holds:
$$|\mei|^2_{g_{\e_j}}\le \tilde{c},\ \ \ \ \ w_{\e_j}^{\nu+1} |\mei|^2_{g_{\e_j}}\le \tilde{c}\,\e_j^{\nu-1}.$$
Therefore if $Q\in\mathrm{supp}(\eta)$ then 
$w_{\e_j}\circ\s\circ \H_j(Q)=w_j|Q|$ and at least one of the estimates
$$\left(|\mei|^2_{g_{\e_j}}\circ\s\circ\H_j\right)(Q)\le \tilde{c}(\eta),\ \ \ \ \ w_j^{\nu+1} \left(|\mei|^2_{g_{\e_j}}\circ\s\circ\H_j\right)(Q)\le \tilde{c}(\eta)\,\e_j^{\nu-1},$$
where $\tilde{c}(\eta)$ is a constant depending only on $\eta$. We can re-write these estimates jointly as 
$$w_j^2\left(\left|\mei\right|^2_{g_{\e_j}}\circ\s\circ\H_j\right)|\eta| \le\, 
\tilde{c}(\eta)\cdot\max_{\R^3}|\eta|\cdot\max\left\{w_j^2,\left(\tfrac{\e_j}{w_j}\right)^{\nu-1}\right\}.$$
Since both $w_j, \tfrac{\e_j}{w_j}\to 0$ in {\sc Case $\R^3\smallsetminus\{0\}$}, we conclude that 
$$\lim_{j\to \infty}
\left\|w_j^2\left(\left|\mei\right|^2_{g_{\e_j}}\circ\s\circ\H_j\right)\eta\right\|_{C^0\left(\R^3,\delta\right)}=0.$$
The convergence \eqref{LICHN:passtoEucl} is now immediate from 
$\displaystyle{\lim_{j\to \infty}}\left\|w_j^2\left(\tfrac{\Lambda}{4}-\tfrac{\t^2}{12}\right)\eta\right\|_{C^0\left(\R^3,\delta\right)}=0$.

We now apply the Exhaustion Argument $\left(\psi_j\right)_{j\in\N}$; we get a non-zero function $\psi\in C^{0,0,\nu-1}(\R^3;0,\infty)$ such that  $\Delta_\delta \psi=0$ on $\R^3\smallsetminus\{0\}$. Note that by Theorem \ref{ell-R3} we in fact have  $\psi\in C^{2,0,\nu-1}(\R^3;0,\infty)$. Consequently there is a constant $c$ such that  
$$|\psi|<c\,r^{1-\nu},\ \ \ |\dnabla \psi|<c\,r^{-\nu}$$
on $\R^3\smallsetminus\{0\}$. 
This control on the ``blow-up" of $\psi$ at the origin allows us to use an  integration-by-parts argument to show that $\Delta_\delta \psi=0$ \emph{weakly on $\R^3$}. We omit the integration details and refer the reader to the corresponding part of the proof of Proposition \ref{VL:MAIN}.
The overall conclusion is that $\psi$ is a non-zero harmonic function which, by virtue of $\psi\in C^{0,0,\nu-1}(\R^3;0,\infty)$, decays at $\infty$. This is a contradiction to the Maximum Principle. Our proof in {\sc Case $\R^3\smallsetminus\{0\}$} is now complete. 

\underline{\sc Obtaining the contradiction in Case $M_0$.} The strategy in this case is to construct a non-zero function $\psi\in C^{0,0,\nu-1}(M_0;\infty)$ which is in the kernel of the operator $\Delta_{g_0}-|\mu_0|^2_{g_0}$ on $M_0$; the existence of such a function is a contradiction to the Maximum Principle. 

Adopt the notation used in the proof of Proposition \ref{VL:MAIN}, {\sc Case $M_0$}. Define the functions 
$$\psi_j:\Omega_j\to \R,\ \ \ \ \psi_j:=\e_j^{\nu-1}\left(\phi_j\circ \pei\right)$$ and the operator
$$\L_j:=\Delta_{g_0}-\tfrac{1}{8}R(g_0)-\tfrac{7}{8}\e_j^2\left(\left|\mei\right|^2_{g_{\e_j}}\circ\pei\right)+5\e_j^2\left(\tfrac{\Lambda}{4}-\tfrac{\t^2}{12}\right).$$
One easily verifies the following properties of the sequence $\left(\psi_j\right)_{j\in\N}$.
\begin{itemize}
\item There is a constant $c_1$ independent of $j$ such that $\left|\psi_j\right|\le c_1$ point-wise on $\Omega_j$,
\item $\left|\psi_j\right|\le w_0^{1-\nu}$ point-wise on the ``asymptotic" region $\Omega_j\cap \Oo_{2C}$,
\item $\left|\psi_j(Q_j)\right|\ge c_{M_0}^{1-\nu}$ and 
\item There is a constant $c_2$ independent of $j$ such that 
$\left|\L_j\psi_j\right|\le c_2\left\|\L_{\e_j}\phi_j\right\|_{0,0,\nu+1}$ on $\Omega_j$. 
\end{itemize}
We now establish a property of the sequence of operators $\left(\L_j\right)_{j\in\N}$ which is analogous to \eqref{LICHN:passtoEucl}. Let $\eta$ be a test function on $M_0$, and let 
$$\L_0:=\Delta_{g_0}-\tfrac{1}{8}R(g_0)-\tfrac{7}{8}\left|\mu_0\right|^2_{g_0}.$$
To compare the actions of $\L_j$ and $\L_0$ on $\eta$ we use Proposition \ref{corr:est}, from which we see that
$$\left|\left(\left|\mei\right|^2_{g_{\e_j}}\circ \pei\right)-
\left|\e_j \mu_0\right|_{\e_j^2g_0}\right|\le c_3\e_j^{-1-\nu/2}$$
for some constant $c_3$.
This estimate implies  
$$\left|\e_j^2\left(\left|\mei\right|^2_{g_{\e_j}}\circ\pei\right)-\left|\mu_0\right|^2_{g_0}\right|
\to 0$$
uniformly on $\mathrm{supp}(\eta)$. 
The convergence 
$$\lim_{j\to \infty}\left\|\L_j\eta - \L_0\eta\right\|_{C^0(M_0,g_0)}=0$$
is now immediate.

The Exhaustion Argument applied to $(\psi_j)_{j\in\N}$ yields a \emph{non-zero}  function $\psi$ in $C^{0,0,\nu-1}(M_0;\infty)$ such that 
$\L_0 \psi=0$.
To understand the operator $\L_0$ better 
we use the Hamiltonian constraint 
$$R(g_0)-|\mu_0|^2_{g_0}=0$$
and re-write $\L_0$  as $\L_0=\Delta_{g_0}-|\mu_0|^2_{g_0}$. Since the Maximum Principle applies to such operators, there are no non-trivial functions in the kernel of $\L_0$ which decay at infinity. The existence of $\psi\in C^{0,0,\nu-1}(M_0;\infty)$ is therefore a contradiction. Our proof is now complete. 
\end{proof}

For $\alpha\in(0,1)$, $\nu\in \left(\tfrac{3}{2}, 2\right)$ and smooth $\phi$ we now have  $$\|\phi\|_{k+2,\alpha,\nu-1}\cle \|\L_\e\phi\|_{k,\alpha,\nu+1}$$
by virtues of Proposition \ref{LICHN:MAIN} and 
Theorem \ref{unif-ell}. It follows that  $\L_\e:C^{k+2,\alpha, \nu-1}(M_\e)\to C^{k,\alpha, \nu+1}(M_\e)$ is injective. In fact, we have a stronger result.

\begin{thm}\label{LICHN:full}
Let $\alpha\in(0,1)$, $\nu\in\left(\tfrac{3}{2},2\right)$ and let $\e$ be sufficiently small. The linearized Lichnerowicz operator 
$\L_\e:C^{k+2,\alpha,\nu-1}(M_\e)\to C^{k,\alpha,\nu+1}(M_\e)$
is invertible, and the norm of its inverse is bounded uniformly in $\e$.
\end{thm}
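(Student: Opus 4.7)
The plan is to assemble the theorem from two ingredients already at my disposal: the a priori estimate of Proposition \ref{LICHN:MAIN} combined with Theorem \ref{unif-ell} (which together furnish an $\e$-uniform two-sided bound and in particular injectivity), and classical Fredholm theory on closed manifolds (which converts injectivity into surjectivity because $\L_\e$ is formally self-adjoint).

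First I would derive the uniform Schauder-type estimate. For any smooth $\phi$ on $M_\e$, applying Theorem \ref{unif-ell} yields
\[
\|\phi\|_{k+2,\alpha,\nu-1}\cle \|\L_\e\phi\|_{k,\alpha,\nu+1}+\|\phi\|_{0,0,\nu-1},
\]
and Proposition \ref{LICHN:MAIN} controls the last term by $\|\L_\e\phi\|_{0,0,\nu+1}\le \|\L_\e\phi\|_{k,\alpha,\nu+1}$, all estimates being uniform in $\e$ for $\e$ small. This immediately gives injectivity of $\L_\e:C^{k+2,\alpha,\nu-1}(M_\e)\to C^{k,\alpha,\nu+1}(M_\e)$ together with the $\e$-uniform bound on the inverse, \emph{provided} the operator is onto.

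Next I would establish surjectivity for each fixed small $\e$. Since $M_\e$ is compact without boundary and $\L_\e$ is a second-order formally self-adjoint elliptic operator, the map $\L_\e:H^{k+2}(M_\e)\to H^k(M_\e)$ is Fredholm of index zero with closed range equal to $\ker(\L_\e)^{\perp_{L^2}}$. Given $f\in C^{k,\alpha,\nu+1}(M_\e)$, compactness of $M_\e$ makes $w_\e$ bounded above and below (by $\e$-dependent constants), so $f\in L^2(M_\e)\subseteq H^k(M_\e)$. The a priori estimate of the previous paragraph, applied to smooth kernel elements (which lie automatically in $C^{k+2,\alpha,\nu-1}(M_\e)$ for this fixed $\e$), shows $\ker(\L_\e)=0$; hence $f\in\mathrm{Im}(\L_\e)$ and there exists $\phi\in H^{k+2}(M_\e)$ with $\L_\e\phi=f$. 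Elliptic regularity in Sobolev spaces plus compactness gives $\phi\in C^{0,0,\nu-1}(M_\e)$, and a final application of Theorem \ref{unif-ell} promotes $\phi$ to $C^{k+2,\alpha,\nu-1}(M_\e)$.

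The hard work was already accomplished in Proposition \ref{LICHN:MAIN}; the present theorem is essentially a soft assembly, and I do not anticipate any substantive obstacle. The only delicate bookkeeping is to keep separate the two roles of the a priori estimate: for each individual $\e$ it forces $\ker(\L_\e)=0$, so that self-adjoint Fredholm theory delivers surjectivity; and simultaneously it provides the $\e$-uniform control $\|\phi\|_{k+2,\alpha,\nu-1}\cle\|\L_\e\phi\|_{k,\alpha,\nu+1}$ from which the uniform bound on $\|\L_\e^{-1}\|$ reads off at once.
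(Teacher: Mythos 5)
Your argument is correct and follows essentially the same route as the paper: injectivity and the $\e$-uniform bound come from combining Proposition \ref{LICHN:MAIN} with Theorem \ref{unif-ell}, surjectivity comes from viewing $\L_\e:H^{k+2}(M_\e)\to H^k(M_\e)$ as a self-adjoint elliptic operator of Fredholm index zero with trivial (smooth) kernel, and the solution is then upgraded to $C^{k+2,\alpha,\nu-1}(M_\e)$ via Sobolev embedding and Theorem \ref{unif-ell}. One small correction to the write-up: the inclusion you want is $C^{k,\alpha,\nu+1}(M_\e)\subseteq C^{k}(M_\e)\subseteq H^{k}(M_\e)$ (for fixed $\e$, since $w_\e$ is bounded above and below), not ``$L^2(M_\e)\subseteq H^k(M_\e)$'', which is backwards.
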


\begin{proof}
It remains to verify that 
$\L_\e:C^{k+2,\alpha,\nu-1}(M_\e)\to C^{k,\alpha,\nu+1}(M_\e)$ is surjective. As a self-adjoint elliptic operator on Sobolev spaces, $\L_\e:H^{k+2}(M_\e)\to H^k(M_\e)$ is Fredholm of index zero. The kernel of this operator consists of smooth functions and is therefore the same as the kernel of the operator $\L_\e:C^{k+2,\alpha,\nu-1}(M_\e)\to C^{k,\alpha,\nu+1}(M_\e)$, which is trivial. Thus  $\L_\e$ acting between Sobolev spaces is injective and, by Fredholm theory, surjective. It now follows from $$C^{k,\alpha,\nu+1}(M_\e)\subseteq H^k(M_\e),\ \  
H^{k+2}(M_\e)\subseteq  C^{0,0,\nu-1}(M_\e),$$ 
and elliptic regularity (Theorem \ref{unif-ell}) that  
$\L_\e:C^{k+2,\alpha,\nu-1}(M_\e)\to C^{k,\alpha,\nu+1}(M_\e)$ is also surjective.
\end{proof}

We solve the Lichnerowicz equation \eqref{LICHN} by interpreting it as a fixed point problem. A formal computation shows that the difference $\eta_\e:=\phi_\e - \phi_0$ between a solution $\phi_\e$ of the Lichnerowicz equation and the constant function 
$\phi_0\equiv 1$ satisfies 
$$\eta_\e=-(\L_\e)^{-1}\left(\Ne(\psi_0)+\Q_\e(\eta_\e)\right),$$
where  
$$
\Q_\e(\eta):=\,\Ne(\psi_0+\eta)-\Ne(\psi_0)-\L_\e\eta
=\tfrac{1}{8}|\me|^2\left((1+\eta)^{-7}-1+7\eta\right)-\left(\tfrac{\Lambda}{4}-\tfrac{\t^2}{12}\right)\left((1+\eta)^5-1-5\eta\right)
$$
is a ``quadratic error term".
Our strategy now is to show that the map
\begin{equation}\label{LICHN:contr}
\mathcal{P}_\e:\eta\mapsto -(\L_\e)^{-1}\Big(\Ne(\psi_0)+\Q_\e(\eta)\Big)
\end{equation}
is a contraction mapping from a small ball in $C^{k,\alpha,\nu-1}(M_\e)$ to itself. To execute this approach we need some estimates for $\Q_\e$. 
\begin{prop}\label{LICHN:Qest}
For a given $c>0$ and sufficiently small $\e$ there exists $c'>0$ independent of $\e$ such that for all $\eta_1, \eta_2\in C^{k,\alpha,\nu-1}(M_\e)$ with 
$$\|\eta_1\|_{k,\alpha,\nu-1},\|\eta_2\|_{k,\alpha,\nu-1}\le c\,\e^{\nu/2}$$
we also have 
$$\left\|\Q_\e(\eta_1)-\Q_\e(\eta_2)\right\|_{k,\alpha, \nu+1}\le c'\,\e^{\nu/2}\|\eta_1-\eta_2\|_{k,\alpha,\nu-1}.$$
\end{prop}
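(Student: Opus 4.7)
The plan is to expand $\Q_\e$ as
$$\Q_\e(\eta)=\tfrac{1}{8}\,|\me|^2_{g_\e}F(\eta)-\left(\tfrac{\Lambda}{4}-\tfrac{\t^2}{12}\right)G(\eta),\qquad F(\eta):=(1+\eta)^{-7}-1+7\eta,\ \ G(\eta):=(1+\eta)^5-1-5\eta,$$
and to exploit the vanishing $F(0)=F'(0)=G(0)=G'(0)=0$, which lets me write $F'(s)=s\,H_F(s)$ and $G'(s)=s\,H_G(s)$ for smooth $H_F,H_G$. The fundamental theorem of calculus then gives
$$F(\eta_1)-F(\eta_2)=(\eta_1-\eta_2)\int_0^1 \xi_t\,H_F(\xi_t)\,dt,\qquad \xi_t:=(1-t)\eta_2+t\eta_1,$$
and an identical formula for $G$, so each difference has both an explicit factor $\eta_1-\eta_2$ and an ``extra'' small factor $\xi_t$. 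Since $\|\eta_j\|_{k,\alpha,\nu-1}\cle \e^{\nu/2}$, the norm equivalence \eqref{Holder:equiv} yields the pointwise bound $|\eta_j|\cle \e^{1-\nu/2}\to 0$, so for $\e$ small $\xi_t$ stays uniformly in a fixed compact subset of $(-1,+\infty)$ and smooth composition gives $\|H_F(\xi_t)\|_{k,\alpha,0}+\|H_G(\xi_t)\|_{k,\alpha,0}\cle 1$. Combining this with Proposition \ref{Holder:props}(2) I obtain
$$\|F(\eta_1)-F(\eta_2)\|_{k,\alpha,2\nu-2}+\|G(\eta_1)-G(\eta_2)\|_{k,\alpha,2\nu-2}\cle \e^{\nu/2}\|\eta_1-\eta_2\|_{k,\alpha,\nu-1}.$$

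For the cosmological-constant piece the constant $\tfrac{\Lambda}{4}-\tfrac{\t^2}{12}$ has $\|\cdot\|_{k,\alpha,0}$ uniformly bounded; since $\nu+1>2\nu-2$ (because $\nu<3$) and $\sup w_\e\cle 1$, Proposition \ref{Holder:props}(1) gives
$$\left\|\left(\tfrac{\Lambda}{4}-\tfrac{\t^2}{12}\right)(G(\eta_1)-G(\eta_2))\right\|_{k,\alpha,\nu+1}\cle\|G(\eta_1)-G(\eta_2)\|_{k,\alpha,2\nu-2}\cle \e^{\nu/2}\|\eta_1-\eta_2\|_{k,\alpha,\nu-1},$$
disposing of the second term of $\Q_\e(\eta_1)-\Q_\e(\eta_2)$.

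The $|\me|^2_{g_\e}$ piece is the delicate one; a direct product bound via $\||\me|^2_{g_\e}\|_{k,\alpha,2}\cle 1$ of Proposition \ref{me:est}(1) loses a power of $\e$ through the subsequent weight conversion, so I will split $M_\e=U_\e\cup V_\e$ (with the sets as in Propositions \ref{div:est} and \ref{me:est}) and invoke the finer bounds in each region. On $U_\e$, where $\||\me|^2_{g_\e}\|_{k,\alpha,0;U_\e}\cle 1$ by Proposition \ref{me:est}(2), Proposition \ref{Holder:props}(2) together with $\nu+1>2\nu-2$ via Proposition \ref{Holder:props}(1) gives
$$\||\me|^2_{g_\e}(F(\eta_1)-F(\eta_2))\|_{k,\alpha,\nu+1;U_\e}\cle \|F(\eta_1)-F(\eta_2)\|_{k,\alpha,2\nu-2}\cle \e^{\nu/2}\|\eta_1-\eta_2\|_{k,\alpha,\nu-1}.$$
On $V_\e$, Proposition \ref{me:est}(3) yields $\||\me|^2_{g_\e}\|_{k,\alpha,\nu+1;V_\e}\cle \e^{\nu-1}$, so Proposition \ref{Holder:props}(2) gives a weight-$3\nu-1$ bound of order $\e^{3\nu/2-1}\|\eta_1-\eta_2\|_{k,\alpha,\nu-1}$; converting back to weight $\nu+1$ via Proposition \ref{Holder:props}(1) using $\sup_{V_\e}w_\e\cle \sqrt\e$ multiplies by exactly $\e^{-(\nu-1)}$, and the $V_\e$ estimate also reduces to $\e^{\nu/2}\|\eta_1-\eta_2\|_{k,\alpha,\nu-1}$. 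Gluing the two restricted estimates into a global bound on $M_\e$ is immediate from Proposition \ref{prop-covers}(1): every chart $\Phi\in\C_\e$ has its image contained, for $\e$ small, in an enlargement of either $U_\e$ or $V_\e$ according to whether $\wef>\sqrt\e/2$ or $\wef\le\sqrt\e/2$.

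The main obstacle is the weight bookkeeping on $V_\e$: the $\e^{\nu-1}$ decay of $|\me|^2_{g_\e}$ must conspire exactly with the factor $\sqrt\e^{-2(\nu-1)}$ coming from the weight conversion in Proposition \ref{Holder:props}(1) to yield $\e^{\nu/2}$ and not a weaker power of $\e$. This is where the hypothesis $\nu\in(\tfrac{3}{2},2)$, in particular $\nu>1$, enters in an essential way.
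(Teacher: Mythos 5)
Your overall strategy is sound in outline, and it is a genuinely different route from the paper's: you use a Hadamard/fundamental-theorem-of-calculus factorization of $F(\eta_1)-F(\eta_2)$ plus a splitting of $M_\e$ into $U_\e$ and $V_\e$ with the refined estimates of Proposition \ref{me:est}(2),(3), whereas the paper expands $\Q_\e$ into a binomial series in $\eta$ and bounds $\|\eta_1^j-\eta_2^j\|$ inductively, with no regional decomposition. The cosmological-constant term and the $U_\e$ estimate are handled correctly. The gap is exactly at the step you yourself flag as the crux: the weight conversion on $V_\e$. Proposition \ref{Holder:props}(1) lowers a weight at the cost of the \emph{infimum} of $w_\e$ on the region: for $\nu_1=\nu+1<\nu_2=3\nu-1$ it gives $\|T\|_{k,\alpha,\nu+1;V_\e}\cle \left(\inf_{V_\e}w_\e\right)^{-(2\nu-2)}\|T\|_{k,\alpha,3\nu-1;V_\e}$, and $\inf_{V_\e}w_\e\sim\e$, not $\sqrt{\e}$, because $V_\e=w_\e^{-1}\left(0,12\sqrt{\e}\right)$ contains the entire component coming from $M_0$, on which $w_\e\equiv C\e$. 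The inequality involving $\sup_{V_\e}w_\e$ runs in the opposite direction (it controls the higher-weight norm by the lower-weight one), so the factor is $\e^{-(2\nu-2)}$ rather than your $\e^{-(\nu-1)}$. With the correct factor your $V_\e$ bound becomes $\e^{3\nu/2-1}\cdot\e^{-(2\nu-2)}=\e^{1-\nu/2}$ times $\|\eta_1-\eta_2\|_{k,\alpha,\nu-1}$, which for $\nu>1$ is strictly weaker than the claimed $\e^{\nu/2}$.

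Moreover, this loss cannot be recovered by sharper bookkeeping inside your splitting. On the part of $M_\e$ coming from $M_0$ one has $|\me|^2_{g_\e}\sim \e^{-2}|\mu_0|^2_{g_0}$ up to lower-order corrections; taking $\eta_2=0$ and $\eta_1=\e^{1-\nu/2}\zeta$ with $\zeta$ a fixed bump supported in a region of the $M_0$ core where $\mu_0\neq 0$ (so that $\|\eta_1\|_{k,\alpha,\nu-1}\sim\e^{\nu/2}$), the quadratic term $F(\eta_1)-F(0)\sim 28\,\e^{2-\nu}$ there, and $w_\e^{\nu+1}\,|\me|^2_{g_\e}\left|F(\eta_1)-F(0)\right|\sim \e^{\nu+1}\cdot\e^{-2}\cdot\e^{2-\nu}=\e \sim \e^{1-\nu/2}\|\eta_1-\eta_2\|_{k,\alpha,\nu-1}$. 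So the exponent $1-\nu/2$ is what the $V_\e$ region genuinely produces unless $K_0$ vanishes on the relevant core, and no ``conspiracy'' between the $\e^{\nu-1}$ decay of $|\me|^2_{g_\e}$ and the weight conversion yields $\e^{\nu/2}$ there. Note that $1-\nu/2>0$, so a Lipschitz factor of order $\e^{1-\nu/2}$ still tends to zero and would still suffice for the contraction argument and for the bound on $\Ne(\phi_0)+\Q_\e(\eta)$ used afterwards; but as a proof of the stated estimate with the rate $\e^{\nu/2}$, your argument does not close.
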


\begin{proof}
It follows from our assumption on the weighted H\"older norms of $\eta_1, \eta_2$ that 
\begin{equation}\label{LICHN:rndest}
\|\eta_1\|_{k,\alpha}\,,\,\|\eta_2\|_{k,\alpha}\le \tilde{c}\, \e^{1-\nu/2}
\end{equation} 
for some constant $\tilde{c}$ independent of $\e$. 
In particular, we see that  if $\e$ is small enough then  
$\left|\eta_1\right|,\,\left|\eta_2\right|<1$. Under such assumptions we may expand the algebraic terms in $\Q_\e(\eta_1)$ and  $\Q_\e(\eta_2)$ into binomial series. This process results in 
$$\|\Q_\e(\eta_1)-\Q_\e(\eta_2)\|_{k,\alpha, \nu+1}\le \sum_{j=2}^\infty a_j\cdot \|\eta_1^j-\eta_2^j\|_{k,\alpha,\nu+1},$$
where $a_j$ are some positive numbers (absolute values of linear combinations of binomial coefficients). 
Since 
$$\left\|\eta_1^j-\eta_2^j\right\|_{k,\alpha,\nu+1}\cle \|\eta_1-\eta_2\|_{k,\alpha,\nu-1} 
\sum_{i=0}^{j-1}\left\|\eta_1^{i}\eta_2^{j-i-1}\right\|_{k,\alpha,\nu-1},$$
we need to  bound the sum of the series 
$\sum_{i,j} a_{j+1}\,\e^{-\nu/2}\left\|\eta_1^{i}\eta_2^{j-i}\right\|$
uniformly in $\e$. 
An inductive argument which uses \eqref{LICHN:rndest} shows that 
$$\left\|\eta_1^{i}\eta_2^{j-i}\right\|_{k,\alpha,\nu-1}\cle 
\e^{\nu/2}\left(\e^{(2-\nu)/4}\right)^{j-1}$$
independently of $j$. The sum of the series 
$\sum (j+1)\,a_{j+1} \left(\e^{(2-\nu)/4}\right)^{j-1}$ is uniformly bounded, and thus our proof is complete.  
\end{proof}

We now prove that the map \eqref{LICHN:contr} is a contraction of a small ball in $C^{k,\alpha,\nu-1}(M_\e)$ to itself. 

\begin{prop}
Let  $\alpha\in (0,1)$ and $\nu\in\left(\tfrac{3}{2},2\right)$. For sufficiently large $c$ and sufficiently small $\e$,
the map $\mathcal{P}_\e$
is a contraction of the closed ball
of radius $c\,\e^{\nu/2}$ around $0$ in $C^{k,\alpha,\nu-1}(M_\e)$.
\end{prop}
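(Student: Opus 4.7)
The plan is to apply the Banach fixed point theorem by showing that the hypotheses of Theorem \ref{LICHN:full} and Propositions \ref{NE:est} and \ref{LICHN:Qest} combine to give the required contraction property on a sufficiently small ball. Let $M$ denote a uniform upper bound (in $\e$) on the operator norm of $(\L_\e)^{-1}:C^{k,\alpha,\nu+1}(M_\e)\to C^{k+2,\alpha,\nu-1}(M_\e)\hookrightarrow C^{k,\alpha,\nu-1}(M_\e)$ furnished by Theorem \ref{LICHN:full}, let $A$ be the implicit constant in Proposition \ref{NE:est}, and let $c'$ be the constant from Proposition \ref{LICHN:Qest} associated to the (yet to be chosen) radius $c$.

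First I would verify that $\mathcal{P}_\e$ maps the closed ball $B_{c\e^{\nu/2}}\subseteq C^{k,\alpha,\nu-1}(M_\e)$ into itself. Observe that $\Q_\e(0)=0$ directly from the definition (both the $(1+\eta)^{-7}-1+7\eta$ and the $(1+\eta)^5-1-5\eta$ factors vanish at $\eta=0$). Therefore, by Proposition \ref{LICHN:Qest} applied with $\eta_2=0$, for every $\eta\in B_{c\e^{\nu/2}}$ we have
\begin{equation*}
\|\Q_\e(\eta)\|_{k,\alpha,\nu+1}\le c'\e^{\nu/2}\|\eta\|_{k,\alpha,\nu-1}\le cc'\e^{\nu}.
\end{equation*}
Combining this with Proposition \ref{NE:est} and the uniform bound on $(\L_\e)^{-1}$ gives
\begin{equation*}
\|\mathcal{P}_\e(\eta)\|_{k,\alpha,\nu-1}\le M\bigl(\|\Ne(\phi_0)\|_{k,\alpha,\nu+1}+\|\Q_\e(\eta)\|_{k,\alpha,\nu+1}\bigr)\le MA\e^{\nu/2}+Mcc'\e^{\nu}.
\end{equation*}
Choosing $c\ge 2MA$ and then shrinking $\e$ so that $Mc'\e^{\nu/2}\le \tfrac{1}{2}$ makes the right-hand side at most $c\e^{\nu/2}$, as required.

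Next I would verify the contraction estimate. For $\eta_1,\eta_2\in B_{c\e^{\nu/2}}$, Proposition \ref{LICHN:Qest} and the uniform bound on $(\L_\e)^{-1}$ give
\begin{equation*}
\|\mathcal{P}_\e(\eta_1)-\mathcal{P}_\e(\eta_2)\|_{k,\alpha,\nu-1}\le M\|\Q_\e(\eta_1)-\Q_\e(\eta_2)\|_{k,\alpha,\nu+1}\le Mc'\e^{\nu/2}\|\eta_1-\eta_2\|_{k,\alpha,\nu-1}.
\end{equation*}
After possibly shrinking $\e$ once more so that $Mc'\e^{\nu/2}\le \tfrac{1}{2}$, the map $\mathcal{P}_\e$ is a $\tfrac{1}{2}$-contraction on $B_{c\e^{\nu/2}}$.

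There is no serious obstacle: the estimates for $\Ne(\phi_0)$, $\Q_\e$, and $(\L_\e)^{-1}$ have already done all the analytic work. The only point requiring a bit of care is the observation that $\Q_\e(0)=0$, which reduces the self-map condition to the quadratic estimate of Proposition \ref{LICHN:Qest} without needing any separate bound on $\|\Q_\e(\eta)\|$. Once these pieces are assembled, the radius $c$ is selected to absorb the linear inhomogeneity $\Ne(\phi_0)$, and $\e$ is then chosen small enough to simultaneously absorb the quadratic correction and ensure contractivity.
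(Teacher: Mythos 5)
Your proposal is correct and follows essentially the same route as the paper: use $\Q_\e(0)=0$ together with Proposition \ref{LICHN:Qest} to bound $\|\Q_\e(\eta)\|_{k,\alpha,\nu+1}$ by $O(\e^{\nu})$ on the ball, combine with Proposition \ref{NE:est} and the uniform bound on $(\L_\e)^{-1}$ from Theorem \ref{LICHN:full} to get the self-map property after choosing $c$ to absorb the $\Ne(\phi_0)$ term, and then use Proposition \ref{LICHN:Qest} again for the Lipschitz estimate, shrinking $\e$ to make the factor less than one. Your ordering of the choices (first $c\ge 2MA$, which is independent of $c'$, then $c'$ from the fixed $c$, then $\e$ small) correctly avoids any circularity, just as in the paper.
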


\begin{proof}
Let $\|\eta\|_{k,\alpha,\nu-1}\le c\,\e^{\nu/2}$ for some $c$ which is determined below. Proposition \ref{LICHN:Qest} and the fact that  $\Q_\e(0)=0$ show that 
$\|\Q_\e(\eta)\|_{k,\alpha,\nu+1}\le c''\e^\nu$
for some $c''$. 
We now use Proposition \ref{NE:est} to see that, for sufficiently small $\e$, there is a constant $\tilde{c}$ independent of $c$ and $c''$ such that 
$$\|\Ne(\phi_0)+\Q_\e(\eta)\|_{k,\alpha,\nu+1}\le \tilde{c}\,\e^{\nu/2}.$$
This means  that   for sufficiently small $\e$ we have 
$$\|\mathcal{P}_\e(\eta)\|_{k,\alpha,\nu-1}=\big\|(\L_\e)^{-1}\big(\Ne(\phi_0)+\Q_\e(\eta)\big)\big\|_{k,\alpha,\nu-1}\le \tilde{c}\,l\e^{\nu/2},$$
where $l$ is the (uniform) upper bound on the inverses of linearized Lichnerowicz operators discussed in Theorem \ref{LICHN:full}. 
By choosing $c\ge\tilde{c}\,l$  we ensure that 
$$\mathcal{P}_\e:\bar{B}_{c\,\e^{\nu/2}}\to \bar{B}_{c\,\e^{\nu/2}}.$$
To examine if this restriction of $\mathcal{P}_\e$ is a contraction let 
$\eta_1, \eta_2\in \bar{B}_{c\,\e^{\nu/2}}$, assume $\e$ is small and compute
$$\|\mathcal{P}_\e(\eta_1)-\mathcal{P}_\e(\eta_2)\|_{k,\alpha,\nu-1}=\big\|(\L_\e)^{-1}\big(\Q_\e(\eta_1)-\Q_\e(\eta_2)\big)\big\|_{k,\alpha,\nu-1} \le c'l\,\e^{\nu/2}\|\eta_1-\eta_2\|_{k,\alpha,\nu-1};$$
the constant $c'$ used here is the one discussed in Proposition \ref{LICHN:Qest}.
The last estimate implies that, for $\e$ small enough, the map $\mathcal{P}_\e:\bar{B}_{c\,\e^{\nu/2}}\to \bar{B}_{c\,\e^{\nu/2}}$ is a contraction.
\end{proof}

Applying the Banach Fixed Point Theorem to 
$\mathcal{P}_\e:\bar{B}_{c\,\e^{\nu/2}}\to \bar{B}_{c\,\e^{\nu/2}}$ we obtain our  main result regarding the Lichnerowicz equation \eqref{LICHN}.

\begin{prop}
Let  $\alpha\in(0,1)$ and $\nu\in\left(\tfrac{3}{2},2\right)$. If $\e$ is sufficiently small, there exists a function $\phi_\e$ on $M_\e$ which solves the Lichnerowicz equation
$$\Delta_{g_\e}\phi_\e-\tfrac{1}{8}R(g_\e)\phi_\e+\tfrac{1}{8}|\me|_{g_\e}^2\phi_\e^{-7}-\left(\tfrac{\Lambda}{4}-\tfrac{\t^2}{12}\right)\phi_\e^5=0.$$
The function $\phi_\e$ is a small perturbation of the constant function $\phi_0\equiv1$; that is 
\begin{equation}\label{phi:est}
\|\phi_\e - \phi_0\|_{k,\alpha,\nu-1}\cle \e^{\nu/2}.
\end{equation}
\end{prop}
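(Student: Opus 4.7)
The plan is to apply the Banach Fixed Point Theorem directly to the contraction $\mathcal{P}_\e$ established in the preceding proposition. Since $\bar{B}_{c\,\e^{\nu/2}}$ is a closed subset of the Banach space $C^{k,\alpha,\nu-1}(M_\e)$ and $\mathcal{P}_\e$ maps it into itself as a contraction, there is a unique fixed point $\eta_\e \in \bar{B}_{c\,\e^{\nu/2}}$. Setting $\phi_\e := \phi_0 + \eta_\e$, the estimate $\|\phi_\e - \phi_0\|_{k,\alpha,\nu-1} = \|\eta_\e\|_{k,\alpha,\nu-1} \le c\,\e^{\nu/2}$ is then immediate, which is \eqref{phi:est}.

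To confirm that $\phi_\e$ actually solves the Lichnerowicz equation, I unpack the fixed-point identity $\eta_\e = -(\L_\e)^{-1}\bigl(\Ne(\phi_0) + \Q_\e(\eta_\e)\bigr)$ into
$$\L_\e \eta_\e + \Ne(\phi_0) + \Q_\e(\eta_\e) = 0.$$
By the very definition $\Q_\e(\eta) = \Ne(\phi_0 + \eta) - \Ne(\phi_0) - \L_\e \eta$, the left-hand side telescopes to $\Ne(\phi_0 + \eta_\e) = \Ne(\phi_\e)$, so $\Ne(\phi_\e) = 0$ as required.

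Two subsidiary checks remain. First, the term $\phi_\e^{-7}$ is meaningful only if $\phi_\e > 0$; applying Proposition \ref{Holder:props}(1) with $\inf_{M_\e} w_\e \sim \e$ yields $\|\eta_\e\|_{k,\alpha} \cle \e^{-(\nu-1)}\|\eta_\e\|_{k,\alpha,\nu-1} \cle \e^{1-\nu/2}$, which tends to $0$ since $\nu < 2$, so $\phi_\e = 1 + \eta_\e$ is uniformly positive for $\e$ small enough. Second, smoothness of $\phi_\e$ follows from running the contraction argument in $C^{k',\alpha,\nu-1}$ for all $k' \ge k$ (the fixed point obtained there agreeing with $\eta_\e$ by uniqueness inside the ambient $C^{k,\alpha,\nu-1}$-ball) and then invoking the uniform elliptic regularity of Theorem \ref{unif-ell}.

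No serious obstacle arises at this step; the argument is essentially bookkeeping, since all of the genuine analytical work, namely the uniform invertibility of $\L_\e$ (Theorem \ref{LICHN:full}) and the quadratic estimate for $\Q_\e$ (Proposition \ref{LICHN:Qest}), has already been established in the preceding results.
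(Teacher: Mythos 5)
Your proposal is correct and follows the paper's own route: the paper likewise obtains $\phi_\e=\phi_0+\eta_\e$ by applying the Banach Fixed Point Theorem to the contraction $\mathcal{P}_\e$ on $\bar{B}_{c\,\e^{\nu/2}}$, with the estimate \eqref{phi:est} coming directly from the radius of the ball. The subsidiary checks you add (positivity of $\phi_\e$ via $\|\eta_\e\|_{k,\alpha}\cle\e^{1-\nu/2}$ and smoothness via elliptic regularity) are left implicit in the paper but are consistent with its estimates, e.g.~\eqref{LICHN:rndest}.
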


The outcome of our work is (the family of) data 
$\left(\phi_\e^4g_\e, \phi_\e^{-2}\me+\tfrac{\t}{3}\phi_\e^4g_\e\right)$ on $M_\e$ which by construction satisfy the momentum and the Hamiltonian constraints with the cosmological constant $\Lambda$. What remains to be discussed is whether these initial data obey the point-particle limit properties discussed in Theorem \ref{MAINTHM}. 

First consider a compact subset $\mathbf{K}\subseteq M\smallsetminus\{S\}$; let  $i_\e:\mathbf{K}\to M_\e$ 
be the corresponding embedding. 
Since $1\cle w_\e$ on $i_\e(\mathbf{K})$ the estimate \eqref{phi:est} yields 
$$\left\|\left(\phi_\e^4\circ i_\e\right)-1\right\|_{C^k(\mathbf{K},g)}\cle \e^{\nu/2}.$$
It now follows from the definition of $g_\e$ that 
\begin{equation}\label{ordinarylim:g}
\left\|(i_\e)^*\left(\phi_\e^4g_\e\right) -g\right\|_{C^k(\mathbf{K},g)}=O(\e^{\nu/2}) \text{\ \ as\ \ }\e\to 0.
\end{equation}
Likewise, the estimate 
$\left\|\left(\phi_\e^{-2}\circ i_\e\right)-1\right\|_{C^k(\mathbf{K},g)}\cle \e^{\nu/2}$
together with \eqref{ordinarylim:mu} and \eqref{ordinarylim:g} implies 
$$
\left\|(i_\e)^*\left(\phi_\e^{-2}\me+\tfrac{\t}{3}\phi_\e^4g_\e\right) -K\right\|_{C^k(\mathbf{K},g)}=O(\e^{\nu/2}) \text{\ \ as\ \ }\e\to 0.
$$
On the other hand, consider a compact set $\mathbf{K}\subseteq M_0$ and the corresponding embedding  $\iota_\e:\mathbf{K}\to M_\e$. Note that $w_\e \sim \e$ on $\iota_\e(\mathbf{K})$ and that, therefore, for each given $j$ with $0\le j\le k$ we have  
$$\e^{j+(\nu-1)}\left|\genabla^j\left(\left(\phi_\e^4\circ \iota_\e\right)-1\right)\right|_{g_\e}=\e^{\nu-1}\left|\gonabla^j\left(\left(\phi_\e^4\circ \iota_\e\right)-1\right)\right|_{g_0}\cle \e^{\nu/2}.$$
We now see that 
$$\left\|\left(\phi_\e^4\circ \iota_\e\right)-1\right\|_{C^k(\mathbf{K},g)}\cle \e^{1-\nu/2}
\text{\ \ and\ similarly\ \ }
\left\|\left(\phi_\e^{-2}\circ \iota_\e\right)-1\right\|_{C^k(\mathbf{K},g)}\cle \e^{1-\nu/2}.
$$
Using the definition of $g_\e$ one readily verifies 
$$\left\|\tfrac{1}{\e^2}(\iota_\e)^*\left(\phi_\e^4g_\e\right)-g_0\right\|_{C^k(\mathbf{K},g)}=O \left(\e^{1-\nu/2}\right) \text{\ \ as\ \ }\e\to 0.$$
Combining this with \eqref{scaledlim:mu} further gives
$$\left\|\tfrac{1}{\e}\left(\iota_\e\right)^*
\left(\phi_\e^{-2}\me+\tfrac{\t}{3}\phi_\e^4g_\e\right)-K_0\right\|_{C^k(\mathbf{K},g_0)}=O\left(\e^{1-\nu/2}\right) \text{\ \ as\ \ }\e\to 0.$$
Thus, the initial data $\left(\phi_\e^4g_\e, \phi^{-2}\me+\tfrac{\t}{3}\phi_\e^4g_\e\right)$ obey the point-particle limit properties discussed in the statement of Theorem \ref{MAINTHM}.

\appendix
\section{On the {\sc CKVF} and {\sc Injectivity assumptions}} V. Moncrief pointed to us that these assumptions are related to non-existence of Killing vector fields in a space-time development  of $(M,g,K)$. Subsequently, we examined if the computations of \cite{VM} can be modified to accommodate for the presence of a cosmological constant and concluded that if $M$ is orientable and the initial data $(M,g,K)$ satisfy {\sc CKVF} and {\sc Injectivity assumptions} then no space-time development  of $(M,g,K)$ has a Killing vector field. We outline the argument below. 

Suppose there exists a Killing vector field $\mathbf{X}$ on a  space-time development of $(M,g,K)$. Let $\mathbf{n}$ denote a unit normal vector field to the Cauchy slice $(M,g,K)$ within this space-time development. We may write $\mathbf{X}=C\mathbf{n}+X$ where $X$ is tangent to $M$ and $C$ is some function on $M$. Note that the {\sc CKVF assumption} implies $C\not\equiv 0$. A lengthy computation which follows \cite{VM} shows that 
\begin{equation}\label{KIDs}
\begin{cases}
0=-2CK+\L_Xg&\ \\
0=-\mathrm{Hess}_g(C)+\left(\mathrm{Ricci}_g-2K^2+\t K-\Lambda g\right)C+\L_X K,&\ 
\end{cases}
\end{equation}
where $K^2_{ab}:=K_a^cK_{cb}$ and where $\L_X$ denotes the Lie derivative. Tracing the second equation, using the first equation to eliminate the Lie derivative term, and using the Hamiltonian constraint to eliminate the scalar curvature term yields
$$0=-\Delta_g C+\left(|K|^2_g-\Lambda\right)C.$$
By {\sc Injectivity assumption} we must have $C\equiv 0$. This produces a contradiction.   

Readers familiar with the notion of Killing Initial Data have surely noticed that \eqref{KIDs} are the defining equations for KIDs on $(M,g,K)$ (see  \cite{KIDS}) and  that our assumptions imply $(M,g,K)$ has no KIDs.

\section{The Euclidean Vector Laplacian in Spherical Coordinates}
The Euclidean vector Laplacian $L_\delta$ can be understood explicitly using spherical harmonics. Let $r:x\mapsto |x|$ and let $\left\{\phi_{l,m}\ \big|\ l\in \N\cup\{0\}, m\in \mathbb{Z}\cap [-l, l]\right\}\subseteq L^2(S^2)$ be the complete orthonormal basis of eigenfunctions for the Hodge Laplacian $\Delta_H$ on $S^2$:   
$$\Delta_H\phi_{l,m}=\delta d \phi_{l,m}=l(l+1) \phi_{l,m}.$$
We note that $\phi_{0,0}=\frac{1}{2\sqrt{\pi}}$ is a constant function. 
The set of $1$-forms 
$$\left\{\frac{d\phi_{l,m}}{\sqrt{l(l+1)}}\ \Big|\ l\in \N,\ \ m\in \mathbb{Z}\cap [-l,l]\right\}\cup \left\{ \frac{\delta\star\phi_{l,m}}{\sqrt{l(l+1)}}\ \Big|\ l\in \N,\ \ m\in \mathbb{Z}\cap [-l,l]\right\}$$ is a complete orthonormal basis for the $L^2$-space of $1$-forms on $S^2$. 
%The basis consists of eigenvectors of the Hodge Laplacian $\Delta_H=d\delta+\delta d$,
%$$\Delta_H \frac{d\phi_{l,m}}{\sqrt{l(l+1)}}=l(l+1) \frac{d\phi_{l,m}}{\sqrt{l(l+1)}},\ \ \ 
%\Delta_H \frac{\delta\star\phi_{l,m}}{\sqrt{l(l+1)}}=l(l+1) \frac{\delta\star\phi_{l,m}}{\sqrt{l(l+1)}}.$$
For notational 
simplicity we set 
$$\lambda_l:=l(l+1),\ \ \ \mathbf{V}_{l,m}:=\left(\frac{d\phi_{l,m}}{\sqrt{\lambda_l}}\right)^{\sharp} \text{\ and\ } \ \mathbf{W}_{l,m}:=\left(\frac{\delta\star\phi_{l,m}}{\sqrt{\lambda_l}}\right)^{\sharp},$$
where $^{\sharp}$ denotes the vector field dual with respect to the standard round metric on $S^2$. In what follows we abuse  notation and let  $\mathbf{V}_{l,m}$, $\mathbf{W}_{l,m}$ denote the trivial ($r$-independent) extensions of the vector fields to $\R^3\smallsetminus\{0\}$. We point out that $\mathbf{V}_{l,m}$ and $\mathbf{W}_{l,m}$ are only defined for $l\neq 0$. 

A vector field $Y$ on $\R^3\smallsetminus\{0\}\approx (0,+\infty)\times S^2$ can be decomposed as 
$$Y=f_r\cdot \partial_r+Z_r,$$
where $f_r:S^2\to \R$ is a one-parameter family of functions and where $Z_r$ is a one-parameter family of vector fields on $S^2$. Using spherical harmonics we may decompose 
$$f_r=\sum_{L^2(S^2)} u_{l,m}(r)\phi_{l,m}, \ \ \ Z_r=\sum_{L^2(S^2)} \left[v_{l,m}(r)\mathbf{V}_{l,m}+w_{l,m}(r)\mathbf{W}_{l,m}\right].$$
A lengthy computation involving Weitzenb\o ck formulae 
(see \cite[\S 3.1]{IMP}) leads to:
\begin{equation*}
\begin{aligned}
L_\delta Y=&-\left(\frac{2}{3}u_{0,0}''+\frac{4}{3}\frac{u_{0,0}'}{r}-\frac{4}{3}\frac{u_{0,0}}{r^2}\right)\phi_{0,0}\partial_r\\
-&\sum_{L^2(S^2)}\left[\frac{2}{3}u_{l,m}''+\frac{4}{3}\frac{u_{l,m}'}{r}-\frac{4}{3}\frac{u_{l,m}}{r^2}-\frac{\lambda_l}{2}\frac{u_{l,m}}{r^2}+\frac{\sqrt{\lambda_l}}{r}v_{l,m}-\frac{\sqrt{\lambda_l}}{6}v_{l,m}'\right]\phi_{l,m}\partial_r\\
-&\sum_{L^2(S^2)} \left[\frac{\sqrt{\lambda_l}}{6}u_{l,m}'+\frac{4\sqrt{\lambda_l}}{3}\frac{u_{l,m}}{r}+2rv_{l,m}'+\left(1-\frac{2\lambda_l}{3}\right)v_{l,m}+\frac{r^2}{2} v_{l,m}''\right]\mathbf{V}_{l,m}\\
-&\sum_{L^2(S^2)}\left[\frac{r^2}{2}w_{l,m}''+2rw_{l,m}'+\left(1-\frac{\lambda_l}{2}\right)w_{l,m}\right] \mathbf{W}_{l,m}.
\end{aligned}
\end{equation*}
This decoupling allows us to find the kernel of $L_\delta$ \emph{explicitly}: it is spanned by 
\begin{description}
\item[{\sc $\bullet$ Basis vector fields which blow up at $\infty$}] The vector field $r\partial_r$ and three families indexed by $l\in \mathbb{N}$ and $m\in\mathbb{Z}\cap [-l,l]$:
\begin{gather*}
\{(l-6)\sqrt{\lambda_l}\cdot r^{l+1}\phi_{l,m}\partial_r+l(l+9)r^l\mathbf{V}_{l,m}\},\\ 
\{\sqrt{\lambda_l}\cdot r^{l-1}\phi_{l,m}\partial_r+(l+1)r^{l-2}\mathbf{V}_{l,m}\}\ \ 
\text{and} \ \ \{r^{l-1}\mathbf{W}_{l,m}\}. 
\end{gather*}
\medbreak
\item[{\sc $\bullet$ Basis vector fields which blow up at the origin}] The vector field $r^{-2}\partial_r$ and three families indexed by $l\in \mathbb{N}$ and $m\in\mathbb{Z}\cap [-l,l]$: 
\begin{gather*}
 \left\{(l+7)\sqrt{\lambda_l}\cdot r^{-l}\phi_{l,m}\partial_r - (l+1)(l-8)r^{-l-1}\mathbf{V}_{l,m}\right\},\\ \left\{\sqrt{\lambda_l}\cdot r^{-l-2}\phi_{l,m}\partial_r - l r^{-l-3}\mathbf{V}_{l,m}\right\}\ 
\text{and}\ \{r^{-l-2}\mathbf{W}_{l,m}\}.
\end{gather*}
 
\end{description}
%Overall, we see that a vector field in the kernel of the vector Laplacian on $\R^3\smallsetminus\{0\}$ is a linear combination of the vector fields listed above.
An alternative way of obtaining the contradiction in {\sc Case $\R^3\smallsetminus\{0\}$} of Proposition \ref{VL:MAIN} is to show 
$$C^{0,0,\nu}_{1,0}(\R^3;0,\infty)\cap \mathrm{ker}(L_\delta)=\{0\}$$
for $\nu\in(1,2)$.  
This identity is easy to prove from the above description of $\mathrm{ker}(L_\delta)$.

\end{document}